\documentclass[11pt]{article}

\usepackage{amsmath,amssymb,amsthm}
\usepackage[T1]{fontenc}
\usepackage[utf8]{inputenc}
\usepackage[english]{babel}

\usepackage{xcolor}
\usepackage{soul}

\newtheorem{proposition}{Proposition}[section]
\newtheorem{theo}[proposition]{Theorem}
\newtheorem{lem}[proposition]{Lemma}
\newtheorem{cor}[proposition]{Corollary}
\newtheorem{conjec}[proposition]{Conjecture}
\newtheorem{fait}[proposition]{Fact}

\theoremstyle{definition}
\newtheorem{definition}[proposition]{Definition}
\newtheorem{nota}[proposition]{Notation}

\theoremstyle{remark}
\newtheorem{rem}[proposition]{Remark}

\newcommand{\ad}{\operatorname{ad}}

\newcommand{\gk}{\mathfrak{g}}
\newcommand{\hk}{\mathfrak{h}}
\newcommand{\bk}{\mathfrak{b}}
\newcommand{\ak}{\mathfrak{a}}
\newcommand{\ck}{\mathfrak{c}}
\newcommand{\uk}{\mathfrak{u}}
\newcommand{\nk}{\mathfrak{n}}
\newcommand{\mk}{\mathfrak{m}}

\newcommand{\ik}{\mathfrak{i}}

\newcommand{\tk}{\mathfrak{t}}

\newcommand{\K}{\mathbb{K}}

\title{On $p$-Lie algebras of finite Morley rank}
\author{Samuel Zamour}

\begin{document}

\maketitle

\begin{abstract}
We develop the theory of $p$-Lie algebras of finite Morley rank. In particular, we prove a form of weight spaces decomposition relative to an appropriate notion of torus and we obtain a fairly complete characterization in the soluble case.
\end{abstract}

\section{Introduction}
Lie rings have recently attracted renewed interest from model theorists, as evidenced by the articles \cite{DT1}, \cite{DT2} and \cite{Zam2}. An ambitious program for the classification of simple Lie rings of finite Morley rank was proposed in \cite{DT1}; from this perspective, the ultimate goal is to provide a form of blueprint of the classification of simple Lie algebras of finite linear dimension over an algebraically closed field of positive characteristic (see \cite{Stra1}, \cite{Stra2} and \cite{Stra3}). This is therefore the counterpart for Lie rings of the ongoing classification of simple groups of finite Morley rank; the following conjecture, analogous to the Cherlin--Zilber conjecture, is the guiding principle of this research program:
\begin{conjec}\cite[``Logarithmic'' Cherlin--Zilber Conjecture]{DT1}
Let $\gk$ be a simple Lie ring of characteristic $p>0$ of finite Morley rank. If $p$ is sufficiently large, then $\gk$ is isomorphic to a finite-dimensional Lie algebra over a (definable) algebraically closed field of the same positive characteristic.
\end{conjec}
The Morley rank, $MR$, can be thought of as a dimension function on definable sets, generalizing in a certain sense the linear dimension or the Zariski dimension. For groups of finite Morley rank, it admits the following axiomatization:
\begin{itemize}
    \item Axiom A (Monotonicity): Let $A$ be a definable set. Then $MR(A)\geq n+1$ if and only if $A$ contains infinitely many pairwise disjoint definable subsets of rank greater than or equal to $n$.
    \item Axiom B (Definability): Let $A$ and $B$ be definable subsets. If $f: A\rightarrow B$ is a definable function, then for every integer $n$, the set \[ \{ b\in B : MR(f^{-1}(b))=n \}\] is definable.
    \item Axiom C (Additivity): Let $A$ and $B$ be definable subsets. If $f:A\rightarrow B$ is a surjective definable function, and if $MR(f^{-1}(b))=n$ for all $b\in B$, then $MR(A)=n+MR(B)$.
    \item Axiom D (Elimination of the infinite quantifier): 
    Let $A$ and $B$ be definable subsets. For any definable function $f:A\rightarrow B$, there exists an integer $m$ such that for every $b\in B$, the fiber $f^{-1}(b)$ is infinite as soon as it contains at least $m$ elements.
\end{itemize}
For background on groups of finite Morley rank, we refer the reader to \cite{BN} and \cite{ABC} .
\\\\
An essential step in the classification of finite-dimensional modular Lie algebras was the understanding of the structure of simple $p$-Lie algebras (or restricted Lie algebras) (see in particular \cite{BW}). A $p$-Lie algebra is a Lie algebra equipped with a ``$p$-th power map''; for matrix Lie algebras in positive characteristic, this is simply the usual $p$-th power map for matrices. Note that in positive characteristic, the ``classical'' simple Lie algebras associated with algebraic groups are restricted; moreover, a number of simple Lie algebras of ``Cartan type'', which are finite-dimensional analogues in positive characteristic of infinite-dimensional Lie algebras of characteristic zero, are also restricted (for example, the Witt algebra).
\\\\
Similarly, the classification of $p$-Lie algebras of finite Morley rank seems to be a prerequisite to significant progress towards the ``logarithmic'' Cherlin--Zilber conjecture. In this article, we lay the foundations for the study of $p$-Lie algebras of finite Morley rank. In particular, we obtain a fairly complete characterisation in the abelian, nilpotent and soluble cases. We also clarify the relationship between semi-simplicity and unipotency, which seemed to be lacking in the case of abstract Lie rings without $p$-structure. In particular, we prove the following results:
\begin{theo}
Let $\gk$ be a $p$-Lie algebra of finite Morley rank and let $\tk$ be a torus that embeds definably as a $p$-Lie algebra into $(\K,+)$, where $\K$ is a definable field of characteristic $p>0$. Let $V$ be a definable connected $p$-module for $\tk$. Then $V=V^{\tk}\oplus V_1\oplus \dots\oplus V_r$, where each $V_i$ is a definable connected irreducible $\tk$-module (in the definable connected category) and $V^{\tk}=\{v\in V : [\tk,v]=0\}$.
\end{theo}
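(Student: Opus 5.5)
The idea is to make $V$ into a definable module over the field $\K$ through $\tk$, and then apply known structure theory (Zilber-type linearization and complete reducibility) for connected modules over algebraic tori and fields in finite Morley rank.

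Fix the definable $p$-embedding $\tk\hookrightarrow(\K,+)$ and write $t^{[p]}=t^p$ for $t\in\tk\subseteq\K$. A $p$-module satisfies $\ad(t)^p=\ad(t^{[p]})$ on $V$, so each $t\in\tk$ acts on $V$ by an endomorphism $\ad t$ with $(\ad t)^p=\ad(t^p)$.

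\textbf{Step 1: reduction to a semisimple action.} The map $t\mapsto t^p$ on $\tk\le\K$ is injective. Iterating, $\tk^{[p^n]}$ is a definable subgroup of $\tk$ of the same rank, hence equal to $\tk$ by connectedness, so $\tk=\tk^{[p]}$. From this I would derive that each $\ad t$ is ``semisimple'' on $V$ in the following sense. Let $V_0=\bigcap_n \ker(\ad t)^{p^n}$-type generalized null component. It is definable by the descending chain condition, since $\ker(\ad t)^{p^n}=\ker \ad(t^{p^n})$. Also $\ad$ of $\tk=\tk^{[p^n]}$ kills it, so it equals $V^{t}$. Intersecting over $\tk$, using DCC on definable subgroups, gives the definable subgroup $V^{\tk}$.

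Fitting's lemma in finite Morley rank, applied to the commuting family $\ad\tk$, gives $V=C_V^\circ(\tk)\oplus[\tk,V]$ up to finite pieces. Connectedness together with $p$-torsion of $V$ (where $V$ is an $\mathbb F_p$-space) will give an exact direct sum $V=V^{\tk}\oplus W$ with $W=[\tk,V]$ definable connected. The $[\tk,V]$ part is handled by iterating $\ad t$: on $W$, $\ad t$ is surjective with finite kernel, and since $(\ad t)^{p^n}$ also stabilizes, the kernel is trivial. I expect this step to be routine.

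\textbf{Step 2: linearization.} On $W$, consider the subring $R$ of $\operatorname{End}(W)$ generated by $\ad\tk$. It is commutative, and by the above each nonzero element of $\ad\tk$ acts injectively, hence bijectively by connectedness and rank. Take a minimal definable connected $\tk$-invariant subgroup $W_1\le W$; it exists by DCC and $W\neq0$. The ring $R$ acts faithfully modulo the centralizer, and each nonzero element acts as an automorphism of $W_1$ by minimality, since the kernel and image are invariant. By Zilber's field theorem, or Schur's lemma plus Wagner's result on definable division rings of finite Morley rank, the action on $W_1$ yields a definable field $L$ with $W_1$ a finite-dimensional (indeed $1$-dimensional after minimality) $L$-vector space and $\tk$ acting through $L$-scalars.

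\textbf{Step 3: complete reducibility.} This is the main obstacle: splitting $W_1$ off with a definable connected $\tk$-invariant complement. The plan is to proceed by induction on $MR(W)$. I would use weights: each minimal $W_1$ determines a character $\lambda:\tk\to L$, additive, commuting with $p$-powers, and definable. Group the minimal submodules by the character, up to the equivalence of isomorphic modules, and show that distinct characters give independent sums (a standard Vandermonde/eigenvalue argument). Within a fixed character class, I would show that $\tk$ acts by scalars of a single field $L_\lambda$, so that the isotypic part is an $L_\lambda$-vector space on which $\tk$ acts diagonally, and definable linear complements exist. The delicate point is that the extension $0\to W_1\to W\to W/W_1\to0$ splits definably. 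For this I would use the semisimplicity from Step 1: $\ad t$ satisfies $(\ad t)^{p^n}=\ad(t^{p^n})$, and combined with $\tk=\tk^{[p]}$ this forces the action of the commutative ring $R$ on $W$ to be semisimple (no nilpotent part), via a Jordan-decomposition argument inside the definable ring $R$ of finite Morley rank. Given this, I would take a complement as the sum of the other generalized eigenspaces. Induction then yields $W=V_1\oplus\cdots\oplus V_r$, each definable, connected and irreducible in the definable connected category, and hence the claimed decomposition.
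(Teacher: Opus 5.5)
Your Step~1 treats as routine what is actually the whole content of the theorem: the \emph{exact} splitting, i.e.\ $V^{\tk}\cap[\tk,V]=0$. Once that is known, the paper simply combines $V=V^{\tk}+[\tk,V]$ (Proposition~\ref{cohomologie}, from $p$-divisibility) with the Maschke-type Fact~\ref{Mas} applied to $[\tk,V]$, whose $\tk$-fixed points are then trivial. Your Step~3 is an unproven re-derivation of that fact.

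The justifications you give for Step~1 do not hold.
\begin{itemize}
\item The chain $\ker(\ad t)^{p^n}$ is ascending, so DCC says nothing about it.
\item A vector killed by $\ad(t^{p^n})=(\ad t)^{p^n}$ is not thereby killed by $\ad t$, let alone by all of $\tk$.
\item Connectedness plus $p$-torsion cannot remove a finite intersection: on $\K^+$, $x\mapsto x^p-x$ is onto, and its kernel $\mathbb{F}_p$ lies inside its image.
\end{itemize}
Concretely, let $\tk=\K$ (with Frobenius as $p$-map) act on $V=\K^2$ by $(x,y)\mapsto(tx,(t^p-t)y)$. This is a definable $p$-module with $V^{\tk}=0$ and $[\tk,V]=V$. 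Yet the element $t=1$ acts on $V$ with image $\K\oplus 0$ and infinite kernel $0\oplus\K$. That kernel is its generalized null space, and it is not killed by $\tk$. So both the claim that each nonzero $\ad t$ is bijective on $W$ (used again in Step~2) and your derivation of semisimplicity fail. Step~3 defers the splitting of $0\to W_1\to W\to W/W_1\to 0$ to that semisimplicity and to an unspecified Jordan decomposition in $R$. Hence nothing in the proposal actually excludes Jordan-block-type non-split extensions.

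A symptom of the gap is that you never use the embedding $\tk\hookrightarrow\K$, except to get $\tk=\tk^{[p]}$, which every torus satisfies. Your argument would therefore prove the decomposition for arbitrary tori. In that generality the paper has to \emph{assume} $V^{\tk}\cap[\tk,V]=0$ (Corollary~\ref{espace de poids}).

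The missing idea is to use \emph{semisimple} elements, those with $t^{p^n}=t$, i.e.\ the elements of $\tk\cap\overline{\mathbb{F}_p}$.
\begin{itemize}
\item By Wagner's theorem \cite{WagF}, $\tk$ is the definable closure of these elements.
\item DCC and the normal basis theorem (Lemma~\ref{module cyclique}) then give a single such $t_0$ with $V^{\tk}=V^{t_0}$.
\item The chain condition on definable connected subgroups gives $[\tk,V]=[t_0,V]$ as well.
\item Since $\ad_{t_0}^{p^n}=\ad_{t_0}$, one gets $\ker\ad_{t_0}^2=\ker\ad_{t_0}$, hence $\ker\ad_{t_0}\cap\mathrm{im}\,\ad_{t_0}=0$.
\end{itemize}
That last equality is exactly the statement your proof needs and does not supply.
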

\begin{theo}
Let $\gk$ be a connected soluble non-nilpotent $p$-Lie algebra of finite Morley rank such that $p>MR(\gk)$. Then the following properties hold:
 \begin{enumerate}
     \item $\gk$ has non-trivial tori.
     \item $\ck$ is a Cartan subalgebra if and only if $\hk=C_{\gk}(\tk)$ where $\tk$ is a non-trivial maximal torus.
     \item $\gk=\gk'+\ck$ for any Cartan subalgebra $\ck$.
\end{enumerate}
\end{theo}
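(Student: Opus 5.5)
Since only the introduction is available, I will use the standard notions: a torus is a definable connected abelian $p$-subalgebra made of $p$-semisimple (toral) elements, and $\gk'$ is the derived subalgebra. The plan proves the three items in order, each step feeding the next. The engine throughout is the previous theorem (weight decomposition of a definable connected $p$-module for a torus embeddable in $(\K,+)$). Alongside it I will use the linearisation furnished by Zilber's field theorem: a minimal definable connected $\gk$-section with a non-trivial soluble action yields a definable field $\K$ over which the action is linear. The hypothesis $p>MR(\gk)$ enters wherever a nilpotent bound is needed. In particular, $\ad(x)$ acting on a definable connected section of rank $<p$ satisfies $\ad(x)^{p^k}=\ad(x^{[p^k]})$, and a nilpotent $\ad(x)$ then has order bounded by the rank.

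For item 1, I use that $\gk$ is soluble but not nilpotent, so by an Engel-type argument there is $x\in\gk$ whose adjoint action is not nilpotent. I then take the Jordan--Chevalley decomposition in the $p$-structure: the subalgebra generated by $x^{[p^k]}$ stabilises, by descending chain conditions, into a definable connected abelian $p$-subalgebra. Its semisimple part $x_s$, obtained as a suitable $x^{[p^{k}]}$, is non-zero because $\ad(x)$ is not nilpotent. The definable closure of $\langle x_s\rangle_p$ gives a non-trivial torus. To apply the previous theorem, I then need a non-trivial torus embedding in some $(\K,+)$. I will obtain it by passing to a minimal non-trivial definable connected $\tk$-section $V$ of $\gk$, where Zilber linearisation gives a field $\K$ and $\tk/C_\tk(V)$ embeds in $\K$.

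For item 2, fix a maximal torus $\tk\ne0$ and put $\hk=C_\gk(\tk)$. Applying the weight decomposition to $\gk$ as a $\tk$-module, and iterating along a composition series of $\tk$ if the embedding only holds for quotients, gives $\gk=\hk\oplus\bigoplus V_i$ with $[\tk,V_i]=V_i$. I then show that $\hk$ is nilpotent, as follows. Any $y\in\hk$ with $\ad_\hk y$ not nilpotent would produce, as in item 1, a torus in $\hk$ commuting with $\tk$. Adding it to $\tk$ gives a bigger torus, which contradicts maximality. Engel's theorem in finite Morley rank (with $p>MR$) then gives nilpotency. Next I show $\hk$ is self-normalising: if $[n,\hk]\subseteq\hk$, then $[n,\tk]\subseteq \hk\cap[\tk,\gk]=0$, so $n\in\hk$. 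Hence $\hk$ is Cartan. For the converse, take a Cartan $\ck$. Its $p$-closure is nilpotent, and the semisimple parts of its elements form a torus $\tk\le\ck$, central in $\ck$ by nilpotency. So $\ck\le C_\gk(\tk)$, which is nilpotent by the above, and self-normalisation of a nilpotent algebra forces $\ck=C_\gk(\tk)$. I must also check that $\tk$ is maximal and non-trivial. It is non-trivial because $\gk$ is non-nilpotent and $\ck\neq\gk$. For maximality, I enlarge $\tk$ to a maximal $\tk'$: then $C(\tk')\le C(\tk)=\ck$, and self-normalisation gives equality.

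For item 3, I use the decomposition $\gk=\hk\oplus\bigoplus V_i$ with $[\tk,V_i]=V_i$. This gives $V_i\subseteq\gk'$, hence $\gk=\gk'+\hk$ for every Cartan subalgebra $\hk$ by item 2.

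The main obstacle will be item 1 together with the step of obtaining a torus that genuinely embeds into $(\K,+)$. Producing the semisimple part definably, and checking that the resulting abelian $p$-algebra is connected and toral, requires care. The Zilber linearisation only gives $\tk$ modulo a kernel, so the previous theorem may have to be applied on successive sections and the pieces glued using connectedness and $p>MR(\gk)$.
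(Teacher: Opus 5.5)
Your proposal has genuine gaps. The main one is that items 2 and 3 rely on a direct weight decomposition $\gk=C_\gk(\tk)\oplus\bigoplus V_i$. The paper only has this for a torus that embeds definably into some $(\K,+)$, and a maximal torus need not. Linearising minimal $\tk$-sections via Zilber does not glue to a direct sum on $\gk$: whether $C_\gk(\tk)\cap[\tk,\gk]=0$ holds is exactly what is unknown in general (it is the hypothesis of Corollary~\ref{espace de poids}). Both your self-normalisation step and your item 3 use this intersection being zero. The paper instead uses Proposition~\ref{cohomologie}, which holds for every torus: $V=V^{\tk}+[\tk,V]$, not necessarily direct, obtained from Fact~\ref{ccohomogie et sous-module} and $p$-divisibility. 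This gives item 3 at once: $\gk=C_\gk(\tk)+[\tk,\gk]\subseteq\ck+\gk'$. Self-normalisation needs no direct sum either. The paper gets it from def-abnormality of $E_\gk(\tk)$. Alternatively, if $n$ normalises $\hk=C_\gk(\tk)$ and $t=s^{p}$ with $s\in\tk$, then $[t,n]=\ad_s^{p-1}([s,n])=0$, since $[s,n]\in\hk$.

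Item 1 also fails as written. A torus must be connected and $p$-divisible, but the $p$-subalgebra generated by the powers of a non-ad-nilpotent $x$ need not be connected. If $x^{[p^n]}=x$, it is a finite, already definable $\mathbb{F}_p$-space. For example, take $t$ in $\K t\oplus\K e$ with $[t,e]=e$, $t^{[p]}=t$, $e^{[p]}=0$: you get $\mathbb{F}_p t$, not $\K t$. The paper works with a Cartan subalgebra $\ck$ instead; it exists by \cite{TZ} and is a $p$-subalgebra because it is self-normalising. Its class is $<p-1$, so $\phi^N(\ck)$ is a connected central torus. If this torus were $0$, then $\ck$ would be $p$-nilpotent, hence ad-nilpotent, hence contained in $F^\circ(\gk)$ by \cite[Lemma 5.3]{TZ}. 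The normaliser condition would then force $\gk=\ck$, contradicting non-nilpotency.

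This same argument is what your unsupported claim ``$\tk\neq 0$ because $\ck\neq\gk$'' needs. For a maximal $\tk$, the paper obtains nilpotency of $C_\gk(\tk)$ by applying this criterion to $C_\gk(\tk)^\circ/\tk$ and lifting tori (a torus-by-torus is a torus), not by your element-wise construction.

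Finally, your converse in item 2 is circular. You invoke nilpotency of $C_\gk(\tk)$, which you proved only for maximal tori, before knowing that the torus of $\ck$ is maximal. You then deduce maximality from $C_\gk(\tk)=\ck$. You also never show that $C_\gk(\tk)$ is connected, which a Cartan subalgebra must be. The missing ingredient is the Engel-subalgebra machinery of \cite{TZ}:
\begin{itemize}
\item $E_\gk(\tk)=C_\gk(\tk)$ for any torus, because $\tk\subseteq F^\circ(E_\gk(\tk))$ acts with class $<p-1$, so $[t^p,x]=0$; this also gives connectedness.
\item $E_\gk(\ck)=E_\gk(\tk)$, since $\ck^{[p^n]}=\tk$.
\item $E_\gk(\ck)=\ck$, by Engel-minimality of Cartan subalgebras.
\end{itemize}
Hence $C_\gk(\tk)=\ck$ directly. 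Maximality of $\tk$ then follows, because any larger torus lies in $C_\gk(\tk)=\ck$.
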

In the second section, we present several algebraic and model-theoretic tools for studying $p$-Lie algebras of finite Morley rank. The third section deals with the structure of abelian and nilpotent $p$-Lie algebras; in particular, we introduce an appropriate notion of ``torus'' and prove its basic properties. In Section 4, we turn to the case of soluble $p$-Lie algebras; we clarify their internal structure and develop the Frattini theory in this context. Finally, in a brief fifth section, we formulate a classification program for ``minimal'' simple $p$-Lie algebras.

\section{Algebraic and model-theoretic tools}
\subsection{Algebraic preliminaries on $p$-Lie algebras}
The fundamental reference for basic results on $p$-Lie algebras is \cite{FS}. We assume some acquaintance with the basic concepts and notations of the theory of Lie algebras (subalgebra, ideal, adjoint map...). 
\\
We begin by giving a precise definition of our objects.
\begin{definition}
    A $p$-Lie algebra (or restricted Lie algebra) is a Lie algebra $\mathfrak{g}$ over a field $\K$ of characteristic $p>0$ equipped with a map $[p]:\mathfrak{g}\rightarrow\mathfrak{g}$, $x\mapsto x^{[p]}$ satisfying the following properties:
    \begin{enumerate}
        \item $[x^{[p]},y]=\ad_{x}^p(y)$ for all $y\in\frak{g}$.
        \item $(\lambda x)^{[p]}=\lambda^p x^{[p]}$ for all $\lambda\in \K$.
        \item $(x+y)^{[p]}=x^{[p]}+y^{[p]}+\sum_{1\leq i \leq p-1}s_i(x,y)$, where $\ad_{x\otimes X+y\otimes 1}^{p-1}(x\otimes 1)=\sum_{1\leq i \leq p-1}is_i(x,y)\otimes X^{i-1}$ in $\mathfrak{g}\otimes_{\K}\K[X]$ for all $x,y\in \mathfrak{g}$.
    \end{enumerate}
\end{definition}
\begin{rem}
Regarding the third axiom, note that there exist explicit formulas for the Jacobson polynomials $s_i(x,y)$, that are expressed in terms of $x,y$ and the language of Lie rings.  
\end{rem}
\begin{definition}
Let $\gk$ be a $p$-Lie algebra. A subalgebra (respectively, an ideal) is called a $p$-subalgebra (respectively, a $p$-ideal) if it is stable under the $p$-map. If $\hk$ is another $p$-Lie algebra, $\phi :\gk\rightarrow\hk$ is a $p$-morphism if it is a morphism of Lie algebras that preserves the $p$-structure.
\end{definition}
\begin{nota}
In the following, we shall write $x^p$ rather than $x^{[p]}$. If $X$ is a subset, we write $X^{p}=X^{[p]}=\{x^p :x\in X\}$ (we use the notation $X^{[p]}$ when that may exist a confusion with the nilpotent series of a subalgebra). Moreover, we may denote the $p$-map by $\phi$.
\\
For convenience, we shall often say $p$-algebra rather than $p$-Lie algebra.
\\
Let $X$ be a subset; $\langle X \rangle$ refers to the subspace generated by $X$.
\end{nota}
\begin{fait}
Let $\gk$ be a $p$-algebra.
\begin{enumerate}
    \item \cite[Lemma 1.2, II]{FS} Let $x,y\in \gk$; then $s_i(x,y)\in \hk^{p}$ (the $p$-th term of the nilpotency series) where $\hk$ is the Lie algebra generated by $x$ and $y$. In particular, if $[x,y]=0$, then $(x+y)^p=x^p+y^p$. Moreover, the restriction of the $p$-map to a nilpotent $p$-subalgebra of nilpotency class less than $p-1$ is additive.
    \item Let $\hk$ be a subalgebra. Then $\hk_p$, the smallest $p$-algebra containing $\hk$, is $\langle \bigcup_{i\in \mathbb{N}} \hk^{[p]^i}\rangle$.
    \item If $\hk$ is soluble (respectively nilpotent), then $\hk_p$ is soluble (respectively, nilpotent) of the same class.
    \item If $\hk$ is an ideal, then $\hk_p$ is an ideal.
\end{enumerate}
\end{fait}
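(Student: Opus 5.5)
The Fact has four items. Item 1 is cited from \cite{FS}. The natural order for the rest is: prove item 2 first, then derive items 3 and 4 from it using the structure of the $p$-map.

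\textbf{Item 2.} Let $\mathfrak{s}=\langle \bigcup_{i} \hk^{[p]^i}\rangle$. Any $p$-subalgebra containing $\hk$ contains every iterate $x^{p^i}$ and is a subspace, so it contains $\mathfrak{s}$. It remains to show that $\mathfrak{s}$ is a $p$-subalgebra.

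For closure under brackets, use axiom 1 of the definition: $[x^{p^i},y^{p^j}]=\pm\ad_{x}^{p^i}\ad_y^{p^j}$-type expressions. More precisely, $[x^{p^i},y]=\ad_x^{p^i}(y)\in\hk$ for $x,y\in\hk$. Then $[x^{p^i},y^{p^j}]=-\ad_{y}^{p^j}(x^{p^i})$, and since $\ad_y$ preserves $\hk$, this element is in $\hk$, hence in $\mathfrak{s}$.

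For closure under the $p$-map, the key point is the following. By axiom 3, $(a+b)^p-a^p-b^p$ is a Lie polynomial in $a,b$ of degree $p\geq 2$, so it lies in $[\mathfrak{s},\mathfrak{s}]$. We have just shown $[\mathfrak{s},\mathfrak{s}]\subseteq\hk$, and axiom 2 handles scalars. Hence the $p$-th power of a linear combination of iterates $x^{p^i}$ is a combination of iterates $x^{p^{i+1}}$ plus an element of $\hk$. This lies in $\mathfrak{s}$.

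\textbf{Item 4.} If $\hk$ is an ideal, then $[x^{p^i},g]=\ad_x^{p^i}(g)\in\hk$ for all $g\in\gk$. So $\hk_p$ is an ideal by item 2.

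\textbf{Item 3.} This is the main obstacle: we need derived and lower central terms of $\hk_p$ to land back in those of $\hk$. The computation above already gives $[\hk_p,\hk_p]\subseteq[\hk,\hk]$ up to one subtlety. The bracket $[x^{p^i},y]=\ad_x^{p^i}(y)$ with $p^i\geq 1$ lies in $[\hk,\hk]=\hk^{(1)}$, and also in $\hk^{2}$. Therefore $[\hk_p,\hk_p]=[\hk,\hk]$. From this, $\hk_p^{(n)}=\hk^{(n)}$ for $n\geq 1$, and solubility class is preserved.

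For nilpotency, I would show by induction that $\mathcal{C}^n(\hk_p)=\mathcal{C}^n(\hk)$ for $n\ge2$. The inductive step needs $[\hk_p,\mathcal{C}^n(\hk)]\subseteq\mathcal{C}^{n+1}(\hk)$. This follows from $[x^{p^i},z]=\ad_x^{p^i}(z)$ together with the fact that $\ad_x$ maps $\mathcal{C}^n(\hk)$ into $\mathcal{C}^{n+1}(\hk)$. The degenerate cases, where $\hk$ is abelian, are covered by the same identities, since then $[\hk_p,\hk_p]=0$.
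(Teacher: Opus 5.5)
Your proposal is correct and follows essentially the paper's route: everything rests on the identity $[x^{p^i},y^{p^j}]=-\ad_y^{p^j-1}([y,x^{p^i}])$ with $[y,x^{p^i}]=-\ad_x^{p^i}(y)\in\hk'$. This identity gives bracket-closure of the span, $\hk_p'=\hk'$, and the ideal property, exactly as in the paper (note that you must peel off one $\ad_y$ this way, since $x^{p^i}$ itself need not lie in $\hk$). You are in fact more explicit than the paper on two points it leaves implicit: why the span is closed under the $p$-map (via axioms 2 and 3 once $[\mathfrak{s},\mathfrak{s}]\subseteq\hk$ is known), and the inductive step $[\hk_p,\mathcal{C}^n(\hk)]\subseteq\mathcal{C}^{n+1}(\hk)$ for the lower central series.
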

\begin{proof}
    \begin{enumerate}
We give a proof of the last three points.
The subspace $\langle \bigcup_{i\in \mathbb{N}} \hk^{[p]^i}\rangle$ is clearly stable under the $p$-map. It therefore suffices to verify stability under the bracket: \[[x^{p^i},y^{p^j}]=-\ad_x^{p^i-1}\circ \ad_y^{p^j-1}([x,y])\in \hk,\] for all $x,y\in \hk$.
\\\\
We proceed by induction on the class of solubility (respectively, the class of nilpotency) to show that $\hk_p^{(n)}=\hk^{(n)}$ and that $\hk_p^{n}=\hk^n$ (the nilpotent series). For $n=1$, we use once again the equality: \[[x^{p^i},y^{p^j}]=-\ad_x^{p^i-1}\circ \ad_y^{p^j-1}([x,y])\in \hk',\] for all $x,y\in \hk$.
\\\\
Regarding the last property, for all $y\in \gk$ and $x\in \hk$, we have $[x^{p^i},y]=\ad_x^{p^i-1}([x,y])\in \hk$ if $\hk$ is an ideal.
    \end{enumerate}
\end{proof}
\begin{fait}
Let $\gk$ be a $p$-Lie algebra and let $\ik$ be an abelian ideal; then $\ik^p\subseteq Z(\gk)$. In particular, if $\gk$ is soluble and defined over a perfect field, then a minimal ideal, i.e.\ one that contains no non-trivial proper ideal, is an abelian $p$-ideal.
\end{fait}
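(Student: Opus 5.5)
For the first claim I would use the defining identity $[x^{p},y]=\ad_x^p(y)$ directly. Let $\ik$ be an abelian ideal, $x\in\ik$ and $y\in\gk$. Since $\ik$ is an ideal, $[x,y]\in\ik$. Since $\ik$ is abelian, $\ad_x([x,y])=[x,[x,y]]=0$. Hence
\[
[x^p,y]=\ad_x^{p}(y)=\ad_x^{p-1}([x,y])=0,
\]
because $p\geq 2$ and so $p-1\geq 1$. Therefore $x^p\in Z(\gk)$, which proves $\ik^p\subseteq Z(\gk)$. The key point is only that $\ad_x^2=0$ on all of $\gk$ whenever $x$ lies in an abelian ideal.

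For the second claim, let $\gk$ be soluble over a perfect field and let $\ik$ be a minimal ideal. The first step is to show that $\ik$ is abelian. The derived algebra $\ik'$ is an ideal of $\gk$ by the Jacobi identity, and it is contained in $\ik$. Since $\ik$ is soluble and nonzero, $\ik'\neq\ik$, so minimality forces $\ik'=0$.

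The second step, which is the real content, is to show that $\ik$ is stable under the $p$-map. By the first part, $\ik^p\subseteq Z(\gk)$. The $p$-map is additive on $\ik$, since elements of $\ik$ commute (first point of the earlier Fact). Moreover $(\lambda x)^p=\lambda^px^p$. Since $\K$ is perfect, every scalar is a $p$-th power, so $\ik^p$ is closed under scalar multiplication as well as under addition. Hence $\ik^p$ is a subspace of $Z(\gk)$, and therefore an ideal of $\gk$.

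Now consider the intersection $\ik\cap\ik^p$, which is an ideal contained in $\ik$. By minimality it is either $0$ or $\ik$. If it equals $\ik$, then $\ik\subseteq\ik^p$. If it is $0$, I would need to rule out that $\ik^p$ escapes $\ik$. This is where I expect the main obstacle. I would try two routes.

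The first route is to pass to $\ik_p$. By the earlier Fact, $\ik_p$ is an abelian ideal, it equals $\ik+\ik^p+\ik^{p^2}+\cdots$, and $\ik_p/\ik$ is central.

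The second route is to observe that if $\ik\not\subseteq Z(\gk)$, then $\ik\cap Z(\gk)$ is a proper ideal of $\ik$, hence $0$ by minimality. In that case I would try to compare $\ik^p$ with $\ik$ via the additive semilinear map $\phi$.

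If $\ik$ is central, every subspace of $\ik$ is an ideal, so minimality forces $\dim\ik=1$, say $\ik=\K x$. Then $\ik$ is a $p$-ideal exactly when $x^p\in\K x$. I would expect the intended reading is that one replaces $\ik$ by the minimal $p$-ideal, or that the argument above yields $\ik^p\subseteq\ik$ under the minimality hypothesis taken among $p$-ideals. I would check which hypothesis the paper intends before finalizing this step.
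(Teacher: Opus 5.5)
Your proof that $\ik^{p}\subseteq Z(\gk)$, your proof that a minimal ideal of a soluble algebra is abelian, and your observation that $\ik^{p}$ is a central ideal over a perfect field all match the paper. The gap is exactly the one you flagged: nothing shows $\ik^{p}\subseteq\ik$.

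The paper closes this step by asserting that minimality gives $\ik^p=\ik$ or $\ik^p=0$. But minimality of $\ik$ only constrains ideals contained in $\ik$, so this presupposes the inclusion it is meant to prove. No argument can fill the gap, because the second claim is false when ``minimal'' means minimal among all ideals. Let $\gk=\K x\oplus\K y$ be abelian over a perfect field $\K$ (e.g.\ $\mathbb{F}_p$), with $(ax+by)^{[p]}=a^{p}y$. On an abelian Lie algebra every $p$-semilinear map is a $p$-map, since $\ad=0$ and the $s_i$ vanish. Then $\K x$ is a minimal ideal of a soluble $p$-algebra, yet $x^{[p]}=y\notin\K x$. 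This is precisely your central case with $x^{p}\notin\K x$, and nothing excludes it.

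Your non-central route fails too. Take $\gk=\K x\oplus\K y\oplus\K z$ with $[z,x]=x$ and $y$ central. Since $\ad_x^{p}=0=\ad_y$ and $\ad_z^{p}=\ad_z$, Jacobson's criterion gives a $p$-map with $x^{[p]}=y$, $y^{[p]}=0$, $z^{[p]}=z$. Then $\K x$ is a non-central minimal ideal that is not $p$-stable. The same failure occurs in the definable connected category: with $\K$ algebraically closed, $\K\times 0\subseteq\K^{2}$ under $(a,b)^{[p]}=(0,a^{p})$ is a $\gk$-minimal definable connected ideal that is not a $p$-ideal. That is the form in which the Fact is invoked later, in the socle lemma.

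Two weaker statements do hold. First, for a minimal ideal $\ik$ of a soluble $\gk$, the closure $\ik_p=\ik+\sum_{k\geq 1}\ik^{[p]^k}$ lies in $\ik+Z(\gk)$ and is an abelian $p$-ideal. This is what your ``first route'' actually proves.

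Second, if minimality is taken among $p$-ideals, as you suspected, then $p$-stability is automatic and abelianness still holds. However, your argument must be adjusted, because $\ik'$ need not be a $p$-ideal. Use $(\ik')_p$ instead:
\begin{itemize}
\item it is an ideal by the last point of the preceding Fact;
\item it is contained in $\ik$ because $\ik$ is $p$-closed;
\item by the third point it has the same derived length as $\ik'$, so if $\ik'\neq 0$ it is a nonzero $p$-ideal properly contained in $\ik$.
\end{itemize}
Minimality then forces $(\ik')_p=0$, hence $\ik'=0$.
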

\begin{proof}
    Let $x\in \ik$ and $y\in \gk$; $[x^p,y]=\ad_x^{p-1}([x,y])=0$ since $\ik$ is an abelian ideal, and therefore $\ik^p\subseteq Z(\gk)$. If $\gk$ is soluble, a minimal ideal $\ik$ is abelian; moreover, the perfectness hypothesis on the base field implies that $\ik^p$ is stable by scalar multiplication and so is an ideal. Finally, either $\ik^p=\ik$, or $\ik^p=0$; in both cases, we deduce that $\ik$ is in fact an abelian $p$-ideal.
\end{proof}
\begin{fait}
Let $\gk$ be a $p$-Lie algebra. The following subalgebras are $p$-subalgebras: $C_{\gk}(X)$, where $X$ is a subset, $N_{\gk}(A)$, where $A$ is a subspace, $Z_n(\gk)$ for $n\in \mathbb{N}$, and the Fitting ideal, $F(\gk)$, when it exists.
\end{fait}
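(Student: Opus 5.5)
The plan is to treat each subalgebra separately. In every case the only tool needed is the identity $[x^p,y]=\ad_x^p(y)$, which is axiom 1 of the definition. Stability under the bracket and under scalars is standard, so what has to be checked is stability under the $p$-map.

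For centralizers: let $x\in C_{\gk}(X)$ and $y\in X$. Then $[x^p,y]=\ad_x^{p-1}([x,y])=0$, so $x^p\in C_{\gk}(X)$.

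For normalizers: let $A$ be a subspace and $x\in N_{\gk}(A)$. Then $\ad_x$ maps $A$ into $A$, hence so does $\ad_x^p=\ad_{x^p}$. Thus $[x^p,A]\subseteq A$, i.e.\ $x^p\in N_{\gk}(A)$.

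For the upper central series, induct on $n$. The case $Z_0=0$ is trivial. The case $Z_1(\gk)=Z(\gk)=C_{\gk}(\gk)$ is covered by the centralizer case. Now assume $Z_n(\gk)$ is a $p$-subalgebra, and in fact an ideal. Take $x\in Z_{n+1}(\gk)$, so that $[x,\gk]\subseteq Z_n(\gk)$. For any $y\in\gk$,
\[
[x^p,y]=\ad_x^{p-1}([x,y]).
\]
Here $[x,y]\in Z_n(\gk)$, and $Z_n(\gk)$ is an ideal, so it is stable under $\ad_x$. Hence $[x^p,y]\in Z_n(\gk)$, which gives $x^p\in Z_{n+1}(\gk)$. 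This step does not even use the induction hypothesis on $p$-stability; it only uses that $Z_n(\gk)$ is an ideal.

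For the Fitting ideal, the definition of $F(\gk)$ is not yet stated, so I take it to be the largest nilpotent ideal when it exists. By Fact 2.5(3) and (4), $F(\gk)_p$ is again a nilpotent ideal, of the same class, and it contains $F(\gk)$. Maximality then forces $F(\gk)_p=F(\gk)$, so $F(\gk)$ is $p$-stable. The main point to get right is exactly this identification of $F(\gk)$ as the unique maximal nilpotent ideal, so that the maximality argument applies. If the paper instead defines $F(\gk)$ as a sum of nilpotent ideals, the same argument applies to each summand $\ik$: replace $\ik$ by the nilpotent ideal $\ik_p\supseteq\ik$. The sum is then a subspace containing $\ik^p$ for each $\ik$, and since the sum is a nilpotent ideal, additivity issues are avoided because $F(\gk)$ equals its own $p$-closure by maximality.
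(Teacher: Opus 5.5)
Your argument is correct in all four cases. The paper states this Fact without proof, and your route is the computation the paper uses in the proofs of Facts~2.5 and~2.6: the identity $[x^p,y]=\ad_x^{p-1}([x,y])$ for centralizers, normalizers and the $Z_n(\gk)$, and Fact~2.5(3)--(4) plus maximality for $F(\gk)$.

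The only weak spot is your closing hedge about $F(\gk)$ as a sum of nilpotent ideals. There you fall back on maximality without saying how the non-additivity of the $p$-map is handled. The clean reason is that a subalgebra spanned by a set $S$ with $S^{p}$ contained in it is automatically $p$-stable, because the Jacobson terms $s_i(x,y)$ lie in the subalgebra generated by $x$ and $y$. This applies to the sum of all nilpotent ideals: for $x$ in a nilpotent ideal $\ik$, one has $x^p\in\ik_p$, which is again a nilpotent ideal by Fact~2.5. So you do not even need $F(\gk)$ to be nilpotent.
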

\begin{definition}
A $p$-subalgebra $\uk$ is said to be $p$-nilpotent if for every $x\in\uk$, there exists $i\in \mathbb{N}$ such that $x^{p^i}=0$. It is of bounded exponent if there exists $N$ such that $x^{p^N}=0$ for all $x\in \uk$. More generally, a subspace $V$ is said to be $p$-nilpotent if it is stable under the $p$-map and all its elements are $p$-nilpotent.
\end{definition}
\subsection{Model-theoretic preliminaries}
Recall that a group of finite Morley rank admits a Descending Chain Condition (DCC) on its definable subgroups. In particular, given a group of finite Morley rank $G$, we can define the connected component, $G^{\circ}$ as the intersection of definable subgroups of finite index.
\\
Another basic tool is the following theorem, \textit{the indecomposability theorem}, due to Zilber:
\begin{fait}\cite[Theorem 5.26]{BN}
 Let $(A_i)_i$ be a family of indecomposable subsets of a group $G$ of finite Morley rank. Assume that each $A_i$ contains the identity element $1$ of $G$. Then the subgroup generated by the subsets $A_i$ is definable and connected. Furthermore, there are finitely many indices $i_1,\dots,i_m$ (not necessarily distinct) such that $\langle\bigcup_iA_i\rangle=A_{i_1}\dots A_{i_m}$ for some $m<2^{MR(G)+1}-2$.  
\end{fait}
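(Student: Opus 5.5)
This is Zilber's indecomposability theorem, cited from \cite[Theorem 5.26]{BN}. I would reproduce the standard argument, which rests on the DCC for definable subgroups, the axioms A--D, and the definition of indecomposability: a definable set $A$ is indecomposable if, for every definable subgroup $H$, either $A$ lies in a single coset of $H$ or $A$ meets infinitely many cosets of $H$.

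The first step is to form finite products $B=A_{i_1}\cdots A_{i_k}$ and choose one of maximal Morley rank. Ranks are bounded by $MR(G)$, so such a maximum exists. Among products of that maximal rank I would then take one of minimal Morley degree. Every such product contains $1$, because each $A_i$ contains $1$. Appending further factors therefore gives a larger set of the same rank and degree.

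The key step is to show that the stabilizer-like subgroup $H=\{g\in G : MR(gB\,\triangle\,B)<MR(B)\}$ is a definable subgroup containing every $A_i$. The definability of the relevant rank conditions comes from Axioms B and D. For each $i$, the product $A_iB$ has the same rank and degree as $B$ by maximality and minimality. I would use this to show that $A_i$ meets only finitely many cosets of $H$, and indecomposability then forces $A_i\subseteq aH$. Since $1\in A_i$, this gives $A_i\subseteq H$. Consequently $H$ contains $\langle\bigcup A_i\rangle$.

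Conversely, I would show that $H\subseteq BB^{-1}$, and that $BB^{-1}$ lies inside a finite product of the $A_i^{\pm1}$. The inclusion holds because two generic subsets of $B$ of full rank must intersect. A small extra argument, via an indecomposability-generated set, removes the inverses. So $H$ equals the generated group and is definable.

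Connectedness follows because $H$ is generated by indecomposable sets containing $1$. If $K$ is a definable subgroup of finite index, each $A_i$ meets only finitely many cosets of $K$. Hence $A_i\subseteq K$, so $H\subseteq K$.

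The bound $m<2^{MR(G)+1}-2$ comes from an induction on rank. Each time a product fails to be a group, multiplying by a suitable factor strictly raises the rank or lowers the degree, which at most doubles the length. Over $MR(G)+1$ rank levels this yields the stated exponential bound.

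The main obstacle is the generic-intersection argument showing $H=BB^{-1}$, where the degree bookkeeping must be carried out carefully. Since the statement is a cited fact, in the paper one simply invokes \cite{BN}.
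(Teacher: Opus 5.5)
The paper gives no argument for this fact; it only cites \cite[Theorem 5.26]{BN}, so the comparison is with the standard proof. Your treatment of definability and connectedness follows that proof. However, one step is wrong as written. Taking $B$ of maximal rank and then of \emph{minimal} degree does not give $\deg(A_iB)=\deg(B)$. Since $1\in A_i$ we have $B\subseteq A_iB$, and equal rank only gives $\deg(A_iB)\ge\deg(B)$, which holds for every $B$. Equality would need \emph{maximal} degree, and there is no a priori bound on degrees: for $A=\{1,a\}$ with $a$ of infinite order, $A^n$ has rank $0$ and degree $n+1$. With equal degrees you would get $A_i\subseteq H$ without using indecomposability at all, which is a warning sign.

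Drop the degree entirely. Every translate $aB$, $a\in A_i$, is a rank-$r$ subset of the single set $A_iB$, which has rank $r$ and some degree $e$. So these translates fall into at most $2^e$ classes up to symmetric differences of rank $<r$. Since $aH=a'H$ exactly when $aB$ and $a'B$ lie in the same class, $A_i$ meets at most $2^e$ cosets of $H$. Indecomposability together with $1\in A_i$ then gives $A_i\subseteq H$. This count is the real heart of the proof. By contrast, $H\subseteq BB^{-1}$ is immediate, since $hB\cap B\neq\emptyset$ for $h\in H$ by the definition of $H$.

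Your paragraph on the bound is a genuine gap. Appending a factor containing $1$ never lowers the degree at fixed rank, and the number of degree changes is not controlled by $MR(G)$. The doubling step is also not explained, so no bound in terms of $MR(G)$ comes out.

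The missing observation is that the coset count only uses $MR(A_iX)=MR(X)$ for all $i$, not maximality of $MR(X)$ among all products. So start from the empty product and replace $X$ by $A_jX$ only when $MR(A_jX)>MR(X)$. After at most $MR(G)$ steps, $X$ has length $\ell\le MR(G)$ and $H=\mathrm{Stab}(X)=\langle\bigcup_iA_i\rangle$.

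Moreover $MR(H)\le MR(X)$. Indeed, $(h,x)\mapsto(x,hx)$ injects $\{(h,x)\in H\times X : hx\in X\}$, a set of rank $MR(H)+MR(X)$, into $X\times X$. Hence $X$ is generic in the connected group $H$. For $h\in H$, the generic sets $hX^{-1}$ and $X$ meet because $H$ has degree $1$, so $H=XX$. This is how the inverses disappear, and it needs connectedness first. It yields $m\le 2MR(G)$, which is below $2^{MR(G)+1}-2$ once $MR(G)\ge 2$.

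For $MR(G)\le 1$ the printed strict inequality is actually false. In $G=(K,+)$ with $K$ algebraically closed, $A=K\setminus\{1\}$ is indecomposable and contains $0$, yet $\langle A\rangle=K\neq A$, so $m\ge 2=2^{2}-2$. The bound therefore cannot be derived as printed, and $m\le 2MR(G)$ is the right target.
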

The article \cite{Zam2} provides a number of cohomological tools for studying Lie rings of finite Morley rank.
\begin{fait}\label{ccohomogie et sous-module}\cite[Theorem 3.7]{Zam2}
    Let $\gk$ be a connected nilpotent Lie ring of finite Morley rank and let $A$ be a definable connected $\gk$-module. Suppose that $A^{\gk}=\{a\in A : g\cdot a=0,~ \text{for all}~g\in \gk\}=0$. Then for all definable connected submodules $C\subseteq B$, we have $(B/C)^{\gk}=0$.
\end{fait}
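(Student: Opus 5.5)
The plan is to reduce the statement to Fitting decompositions for single elements of $\gk$, and then use nilpotency of $\gk$ to get a joint decomposition of $A$ that is inherited by $B$, $C$ and $B/C$.

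\textbf{Fitting decomposition for one element.} Fix $x\in\gk$ and consider the definable endomorphism $x\cdot-$ of $A$.
\begin{enumerate}
\item The images $x^n(A)$ form a descending chain of definable connected subgroups, so by DCC they stabilise at some $A_1(x)$.
\item The kernels $\ker x^n$ form an ascending chain of definable subgroups; their ranks are bounded, so their connected components stabilise.
\item By additivity of rank applied to $x^N$, for $N$ large $A_0(x)^{\circ}+A_1(x)$ has full rank in $A$. Since $A$ is connected, $A=A_0(x)^{\circ}+A_1(x)$.
\item $x$ is injective modulo finite on $A_1(x)$ and surjective on it. A rank computation then gives that $A_0(x)\cap A_1(x)$ is finite.
\end{enumerate}
Since $\gk$ is nilpotent, $\ad_x^m=0$ on $\gk$ for some $m$. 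The usual commutator identity then shows that $y\in\gk$ maps $\ker x^n$ into $\ker x^{n+m}$ and $x^{n+m}(A)$ into $x^n(A)$. Hence $A_0(x)^{\circ}$ and $A_1(x)$ are definable connected $\gk$-submodules.

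\textbf{Joint decomposition.} Iterate over elements $x_1,\dots,x_k$; by DCC, finitely many suffice. This splits $A$, up to finite intersections, as a sum of definable connected submodules. On one of them, $A_{00}=\bigcap_i A_0(x_i)^{\circ}$, every element of $\gk$ acts nilpotently. On each of the others, some $x_i$ acts surjectively with finite kernel.

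I then need an Engel-type statement in the definable connected category. If $\gk$ is connected nilpotent and acts on a definable connected nonzero $W$ with every element acting nilpotently, then $W^{\gk}\neq 0$. I would prove this by induction on $MR(W)$, using that the kernel of a nilpotent definable endomorphism of a nonzero connected group is infinite, and that centralisers in $W$ of finitely many elements are $\gk$-stable pieces, using nilpotency of $\gk$ again. Since $A^{\gk}=0$, this forces $A_{00}=0$.

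\textbf{Passing to $B$ and $C$.} The construction is functorial. For a submodule $B$ we have $B_0(x)\subseteq A_0(x)$ and $B_1(x)\subseteq A_1(x)$, so $B$ and $C$ inherit compatible decompositions with trivial nilpotent part. Then $B/C$ decomposes, up to finite subgroups, into pieces on each of which some $x_i$ acts surjectively with finite kernel. This last claim is a snake-lemma argument for $x_i$ acting on $C_1\subseteq B_1$.

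Consequently $(B/C)^{\gk}$ is a definable subgroup whose connected component is trivial, because its projection to each piece lies in the finite kernel of some $x_i$. So $(B/C)^{\gk}$ is at least finite.

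\textbf{Main obstacle.} The hard step is upgrading "finite" to "zero", and handling the finite intersections $A_0(x)\cap A_1(x)$. Both require showing that $x$ acts honestly bijectively, not merely modulo finite, on the relevant pieces. I would first try to get this from $A^{\gk}=0$ directly. A finite $\gk$-invariant subgroup, being inside $A_0(x)$ for all $x$ and $\gk$ being connected, should consist of $\gk$-fixed points. Here I would use that a connected group acting definably on a finite set acts trivially, applied to the orbits of the additive group generated by the action. That would kill $A_0(x)\cap A_1(x)$ and make the relevant maps bijective, hence $(B/C)^{\gk}=0$. The Engel step is the second point where I expect genuine work.
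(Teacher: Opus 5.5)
The paper gives no proof of this statement. It is imported as a black box from \cite[Theorem 3.7]{Zam2}, so there is no in-paper argument to compare with. Judged on its own, your outline is viable: a Fitting decomposition for one element, an Engel lemma producing an element that acts non-nilpotently, then reduction to the Fitting-null part. But the step you flag as the main obstacle is not closed by what you propose. Without it you only reach finiteness of $(B/C)^{\gk}$, and finiteness cannot be upgraded afterwards. A finite $(B/C)^{\gk}$ only gives a definable $D\supseteq C$ with $D/C$ finite and $\gk\cdot D\subseteq C$, which contradicts nothing.

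Concretely, your step 4 claim that $A_0(x)\cap A_1(x)$ is finite is false for $A_0(x)=\bigcup_n\ker x^n$. Take $x\colon t\mapsto t^p-t$ on $\K^+$, with $\K$ algebraically closed. Then $A_1(x)=\K$, while $\bigcup_n\ker x^n$ is infinite, since $|\ker x^n|=p^n$. Only $A_0(x)^\circ\cap A_1(x)$ is finite. Killing it does not make $x$ injective on $A_1(x)$: in the same example $A_0(x)^\circ=0$ but $\ker x\cap A_1(x)=\mathbb{F}_p$. The group you actually need to kill is $\ker x\cap A_1(x)$. It is finite, but it is not $\gk$-invariant when $x$ is not central. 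So the principle ``finite $\gk$-invariant subgroups consist of fixed points'' does not apply to it, and its $\gk$-invariant hull $\bigcup_n(\ker x^n\cap A_1(x))$ is not known to be finite. Also, $\gk$ does not act on $A$ by a group action, so ``a connected group acting on a finite set acts trivially'' is not the right tool.

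The missing idea is to argue one element at a time.
\begin{enumerate}
\item For $a\in A$, the map $g\mapsto g\cdot a$ is a definable homomorphism from the connected group $(\gk,+)$. So $\gk\cdot a$ is a definable connected subgroup; if it is finite it is $0$, and then $a\in A^{\gk}=0$.
\item If $\ad_x^m=0$ on $\gk$, the identity $x^k g=\sum_j\binom{k}{j}(\ad_x^j g)\,x^{k-j}$ in $\mathrm{End}(A)$ gives $g(\ker x^n)\subseteq\ker x^{n+m-1}$. Hence for $a\in\ker x^n\cap A_1(x)$, the orbit $\gk\cdot a$ lies in the finite group $\ker x^{n+m-1}\cap A_1(x)$, so $a=0$.
\item Therefore $x$ is bijective on $A_1(x)$, and $A=A_0(x)^\circ\oplus A_1(x)$ exactly, with no finite intersection.
\item Since $B_1(x)\subseteq A_1(x)$ and $B_0(x)^\circ\subseteq A_0(x)^\circ$, and likewise for $C$, you get $B=B_0(x)^\circ\oplus B_1(x)$ and $C=C_0(x)^\circ\oplus C_1(x)$.
\item This gives $B/C\cong B_0(x)^\circ/C_0(x)^\circ\oplus B_1(x)/C_1(x)$, with $x$ bijective on the second summand. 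So $(B/C)^{\gk}=(B_0(x)^\circ/C_0(x)^\circ)^{\gk}$.
\item Choose $x$ non-nilpotent on $A$, which your Engel lemma allows since $A\neq 0=A^{\gk}$. Then $MR(A_0(x)^\circ)<MR(A)$, and you conclude by induction on $MR(A)$ applied to $A_0(x)^\circ$.
\end{enumerate}
This also makes the multi-element joint decomposition and the ``finite first'' stage unnecessary.

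In your Engel lemma, note that $\ker_W x$ is $\gk$-stable only for central $x$. So induct on $MR(\gk)$ rather than $MR(W)$. Pass to $\gk/Z(\gk)$ acting on a connected infinite piece of the common kernel of $Z(\gk)$ in $W$. Here $Z(\gk)$ is infinite when $\gk\neq 0$, because it contains the last nonzero term of the lower central series, which is a definable connected ideal.
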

The structure of soluble Lie rings of finite Morley rank is in part elucidated in \cite{DT2} and \cite{TZ}.
\begin{definition}
A subring $\ck$ is said to be def-abnormal if it is definable and if every definable subring $\uk$ containing $\ck$ is self-normalizing.
\end{definition}

We shall need the following result:
\begin{fait}\label{anneau résoluble}
Let $\gk$ be a connected soluble Lie ring of finite Morley rank such that $\mathrm{char}(\gk)>MR(\gk)$.
\begin{enumerate}
    \item \cite[Proposition 5.4 and Proposition 5.9]{TZ} Let $\hk$ be a definable connected nilpotent subring. Then $E_{\gk}(\hk)=\bigcap_{x\in\hk}E_{\gk}(x)$ is definable, connected and def-abnormal, where $E_{\gk}(x)=\bigcup_{n\in \mathbb{N}}\ad^{-n}_x(0)$. Moreover, there exists $k\in \mathbb{N}$ such that $\ad^k_x(E_{\gk}(\hk))=0$ for all $x\in \hk$.
    \item \cite[Corollary 3.12]{TZ} $\gk$ has Cartan subrings, i.e., definable connected, self-normalizing and nilpotent ones. Moreover, a subring is a Cartan subring if and only if it is def-abnormal minimal if and only if it is Engel minimal.
\end{enumerate}
\end{fait}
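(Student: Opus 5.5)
Both parts are quoted from \cite{TZ}, so I would simply invoke them; but here is how I would reconstruct the arguments, relying throughout on $\mathrm{char}(\gk)>MR(\gk)$. The plan for the first point is a \emph{Fitting decomposition} for a single element. For $x\in\gk$, the chain $\ker\ad_x\subseteq\ker\ad_x^2\subseteq\cdots$ consists of definable subgroups. The ranks are bounded by $MR(\gk)$, so they stabilise. I would then argue, using that $\gk$ is a vector space over $\mathbb{F}_p$ and the characteristic hypothesis, that the chain itself stabilises at some $n\leq MR(\gk)$. This gives $E_\gk(x)=\ker\ad_x^{N}$ for a uniform $N$, hence definability. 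Dually, the images $\ad_x^n(\gk)$ decrease and stabilise by DCC, and Axiom C gives $\gk=E_\gk(x)\oplus\ad_x^N(\gk)$. The summand $\ad_x^N(\gk)$ is connected if $\gk$ is, as a definable image of a connected group. I would get connectedness of $E_\gk(x)$ by showing it is the image of the definable connected group $\gk$ under the definable endomorphism projecting onto the first summand. Finally, DCC on definable subgroups makes $E_\gk(\hk)=\bigcap_{x\in\hk}E_\gk(x)$ a finite intersection, hence definable. The uniform $k$ with $\ad_x^k(E_\gk(\hk))=0$ is just the uniform $N$ above.

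For connectedness of the intersection $E_\gk(\hk)$, I would use that $\hk$ normalises each $E_\gk(y)$ for $y\in\hk$. This should follow from nilpotency of $\hk$, via $\ad$ of an element of a nilpotent subring acting nilpotently modulo commutators. One can then decompose simultaneously: each $E_\gk(y)$ is $\ad_x$-stable for $x\in\hk$ and splits further. After finitely many steps $\gk=E_\gk(\hk)\oplus\mathfrak{r}$, with $\mathfrak{r}$ a sum of connected ``Fitting one'' pieces and $E_\gk(\hk)$ a definable direct summand of a connected group, hence connected.

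Def-abnormality then goes as follows. Let $\uk\supseteq E_\gk(\hk)$ be definable, so $\hk\subseteq\uk$ (as $\hk\subseteq E_\gk(\hk)$ by nilpotency), and let $y\in N_\gk(\uk)$. Then $[\hk,N_\gk(\uk)]\subseteq\uk$, so $\hk$ acts trivially on $N_\gk(\uk)/\uk$. Decompose $N_\gk(\uk)$ with respect to $\hk$. Its $E$-part lies in $E_\gk(\hk)\subseteq\uk$, and its Fitting-one part is $\ad_x^N(\cdot)$ of something, which lands in $\uk$ because $[x,N_\gk(\uk)]\subseteq\uk$. Hence $N_\gk(\uk)=\uk$.

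For the second point, I would pick a definable connected nilpotent $\hk$ minimising $MR(E_\gk(\hk))$, starting for instance from $\hk=\langle x\rangle$-generated connected pieces. The aim is to show $E_\gk(\hk)$ is nilpotent; it is then self-normalising by def-abnormality, hence a Cartan subring. The key step is an Engel-type theorem: if every element of $E_\gk(\hk)$ were not ad-nilpotent on $E_\gk(\hk)$, some $z$ would give $E_{E_\gk(\hk)}(z)$ together with $\hk$ a smaller $E$, contradicting minimality. An Engel ring of finite Morley rank in characteristic $>MR$ should then be nilpotent, via a Zassenhaus/Engel argument using Fact~\ref{ccohomogie et sous-module} on successive quotients. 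The equivalences with ``def-abnormal minimal'' and ``Engel minimal'' follow the same pattern: a Cartan $\ck$ equals $E_\gk(\ck)$, and minimal such objects are nilpotent. I expect the Engel-implies-nilpotent step, and the stabilisation of kernel chains using the characteristic bound, to be the main obstacles.
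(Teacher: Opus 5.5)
The paper gives no proof of this Fact; it is quoted from \cite{TZ}, so invoking \cite{TZ} is exactly what the paper does. The reconstruction you add is not a proof, though. The two steps you flag as ``obstacles'' are genuine gaps, and both come from the same missing ingredient.

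First, the stabilisation of $\ker\ad_x^n$ cannot come from ranks plus $\mathbb{F}_p$-linearity. Take $(\K,+)$ with $\K$ algebraically closed of characteristic $p$, and $f(v)=v^p-v$. This map is definable, additive and surjective, with $|\ker f^n|=p^n$. The image chain is constant and the rank identity of Axiom C holds at every stage. Yet $\bigcup_n\ker f^n$ is infinite and co-infinite, hence not definable, and $\K\neq E\oplus f^N(\K)$ for every $N$.

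So Axiom C does not give $\gk=E_\gk(x)\oplus\ad_x^N(\gk)$. That splitting is the Fitting argument, and its whole content is the stabilisation of the kernel chain, i.e.\ ruling out that $\ad_x$ acts with a finite non-trivial kernel on some piece. For this you must use the Lie structure, solubility and $p>MR(\gk)$. What is needed is a linearisation of the action on the factors of a series of definable connected ideals with minimal factors: a Lie-type theorem (this is where $p>MR(\gk)$ enters) together with a Zilber-type field theorem. Then each $\ad_x$ acts on each factor as a scalar, hence as $0$ or bijectively, and induction on the length of the series gives stabilisation at some $n\leq MR(\gk)$. Granting this, the rest of your first part is sound: the finite intersection by DCC, connectedness via the projections, the simultaneous splitting via the binomial expansion of $\ad_y^n\ad_x$, the uniform $k$, and def-abnormality by splitting the $\hk$-invariant group $N_\gk(\uk)$.

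Second, the Cartan part. Minimising $MR(E_\gk(\hk))$ and exhibiting $z\in E_\gk(\hk)$ not ad-nilpotent on it yields no contradiction. The ring $E_\gk(\hk)\cap E_\gk(z)$ is not of the form $E_\gk(\hk')$ for a connected nilpotent $\hk'$, since $\hk$ and $z$ need not generate a nilpotent subring.

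What is missing is a Barnes-type perturbation at the level of single elements. Let $E_\gk(x)$ be a minimal Engel subring and $y\in E_\gk(x)$. Choose $c\in\mathbb{F}_p^{*}$ such that $\ad_{x+c(y-x)}$ is still invertible on $\gk/E_\gk(x)$. This is possible because, with scalar action on factors, each factor excludes at most one $c$, and there are fewer than $p-1$ factors since $p>MR(\gk)$. Then $E_\gk(x+c(y-x))\subseteq E_\gk(x)$, minimality forces equality, and $y$ is ad-nilpotent on $E_\gk(x)$.

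You then still need an Engel theorem, and Fact~\ref{ccohomogie et sous-module} is not it. It presupposes a nilpotent acting ring and only propagates the \emph{absence} of fixed points. With the linearisation above the Engel step is immediate: an ad-nilpotent element acts as $0$ on every minimal factor, so a series with minimal factors is central. The same fixed-point input is needed for your claim that a Cartan $\ck$ equals $E_\gk(\ck)$. The characterisation of Cartan subrings as minimal def-abnormal subrings is not addressed at all.
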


In the remainder of the article, a $p$-Lie algebra of finite Morley rank denotes a Lie ring $\langle \gk, +,[,],\phi\rangle$ of finite Morley rank such that $(\gk,+)$ is an elementary abelian $p$-group for a prime number $p>3$(so we have a natural $\mathbb{F}_p$-vector space structure compatible with the bracket, which can be assumed to be definable) and such that $\phi$ denotes a definable $p$-map relative to this Lie algebra structure over $\mathbb{F}_p$. Note that any subgroup is automatically an $\mathbb{F}_p$-vector space.

\section{Abelian and nilpotent $p$-algebras}
\subsection{First properties}
We begin by clarifying the impact of the $p$-structure on definable closures and connected components. A constant difficulty we shall face stems from the fact that the $p$-map is not in general a morphism of additive groups.
\begin{lem}\label{p-ideal connexe}
    Let $\nk$ be a nilpotent $p$-algebra of finite Morley rank of nilpotency class less than $p-1$. Then $\nk^{\circ}$ is a $p$-ideal of $\nk$.
\end{lem}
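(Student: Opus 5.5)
Since $\nk$ has nilpotency class less than $p-1$, the first Fact above says that the $p$-map $\phi$ is additive on $\nk$. On an elementary abelian $p$-group, additivity gives $\mathbb{F}_p$-linearity. So $\phi:\nk\to\nk$ is a definable group endomorphism of $(\nk,+)$. The plan is in two parts. First show that $\nk^{\circ}$ is an ideal, which is standard. Then show that $\phi(\nk^{\circ})\subseteq\nk^{\circ}$ using the endomorphism property.

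\emph{Ideal property.} For each $y\in\nk$, the map $\ad_y$ is a definable additive endomorphism of $(\nk,+)$. Since $\nk^{\circ}$ is connected, so is $\ad_y(\nk^{\circ})$. Hence $\nk^{\circ}+\ad_y(\nk^{\circ})$ is a definable connected subgroup containing $\nk^{\circ}$. Because $\nk^{\circ}$ has finite index in $\nk$, this sum has the same rank as $\nk^{\circ}$, and connectedness forces it to equal $\nk^{\circ}$. So $[y,\nk^{\circ}]\subseteq\nk^{\circ}$ for all $y$. Equivalently, $\nk^{\circ}$ is the intersection of all definable finite-index subgroups, and $\ad_y^{-1}$ of such a subgroup is again definable of finite index.

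\emph{Stability under $\phi$.} Since $\phi$ is a definable additive endomorphism, the image $\phi(\nk^{\circ})$ is a definable connected subgroup of $\nk$, because the image of a connected group under a definable homomorphism is connected. Then $\nk^{\circ}+\phi(\nk^{\circ})$ is definable, connected and contains $\nk^{\circ}$. As $\nk^{\circ}$ has finite index, this sum equals $\nk^{\circ}$, so $\phi(\nk^{\circ})\subseteq\nk^{\circ}$. Alternatively: for each definable subgroup $H$ of finite index, $\phi^{-1}(H)$ is a definable subgroup of finite index (the index of $\phi^{-1}(H)$ is bounded by $[\nk:H]$). Hence $\nk^{\circ}\subseteq\phi^{-1}(H)$, and intersecting over all $H$ gives $\phi(\nk^{\circ})\subseteq\nk^{\circ}$. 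Together with the ideal property, this shows that $\nk^{\circ}$ is a $p$-ideal.

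The only real point, which I expect to be the main obstacle, is the additivity of $\phi$. In general the $p$-map is not a group morphism, and then neither the image argument nor the preimage argument applies. This is exactly where the hypothesis on the nilpotency class enters, through the first Fact: $\nk$ is itself a nilpotent $p$-subalgebra of class less than $p-1$, so the Jacobson terms $s_i(x,y)$ lie in $\nk^{p}=0$. I would check carefully that the indexing convention for the nilpotent series makes the vanishing $\nk^p=0$ hold under the stated bound.
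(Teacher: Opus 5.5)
Your proposal is correct and follows the paper's argument: the paper likewise notes that $\ad_x(\nk^{\circ})$ and $(\nk^{\circ})^{p^i}$ are definable connected subgroups, the latter because $\phi$ is additive on $\nk$ by the first Fact, and therefore lie in $\nk^{\circ}$. Your indexing concern is harmless, since the $s_i(x,y)$ are combinations of commutators of length $p$ and so vanish once the class is at most $p-1$, which the hypothesis more than guarantees.
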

\begin{proof}
    Note that for any $x\in\nk$ and any $i\in\mathbb{N}$, $\ad_x(\nk^{\circ})$ and $(\nk^{\circ})^{p^i}$ are definable connected groups; it suffices to pass to the connected component.
\end{proof}
\begin{lem}
Let $\gk$ be a $p$-algebra of finite Morley rank. Let $\nk$ be a definable connected subalgebra (respectively, ideal) of nilpotency class less than $p-1$. Then $\nk_p$ is a nilpotent definable connected $p$-algebra (respectively, $p$-ideal).
\end{lem}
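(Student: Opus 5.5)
We have to show that $\nk_p$ is definable, connected and nilpotent, and is a $p$-subalgebra (a $p$-ideal when $\nk$ is an ideal). Nilpotency is free: by the Fact on $\hk_p$, $\nk_p$ is nilpotent of the same class as $\nk$, hence of class less than $p-1$. The same Fact shows that $\nk_p$ is a $p$-subalgebra, and a $p$-ideal when $\nk$ is an ideal. What remains is definability and connectedness, since a priori $\nk_p=\langle \bigcup_i \nk^{[p]^i}\rangle$ is only an abstract subspace.

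\textbf{Step 1: each iterate is definable and connected.} Because $\nk_p$ is nilpotent of class less than $p-1$, the first point of the first Fact applies: the $p$-map restricted to $\nk_p$ is additive. It is therefore an additive group endomorphism of $\nk_p$, and it preserves $\nk$'s iterates. Concretely, consider the definable map $\phi^i:\nk\to\gk$, $x\mapsto x^{p^i}$. It is additive on $\nk$, because $\nk\subseteq\nk_p$ and the $p$-map is additive there. So $\nk^{[p]^i}=\phi^i(\nk)$ is a definable subgroup. It is connected as the image of the connected group $\nk$ under a definable homomorphism.

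\textbf{Step 2: the generated group.} Each $\nk^{[p]^i}$ is a definable connected subgroup containing $0$, hence indecomposable. By Zilber's indecomposability theorem, the subgroup generated by all of them is definable and connected. Since these sets are additive subgroups, this generated group equals the subspace $\langle\bigcup_i\nk^{[p]^i}\rangle$; over $\mathbb{F}_p$ every subgroup is a subspace. Hence $\nk_p$ is definable and connected. Alternatively, the sums $\nk+\nk^{p}+\dots+\nk^{p^k}$ form an ascending chain of definable connected subgroups, so the ranks stabilize, and by connectedness the chain itself stabilizes.

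\textbf{Main obstacle.} The delicate point is justifying additivity of the $p$-map on $\nk$. That requires knowing the class bound for the $p$-closure $\nk_p$, or at least for the Lie algebra generated by two elements of $\nk$. This is where the hypothesis "class less than $p-1$" is used. The issue is handled by the first point of the first Fact: $s_i(x,y)$ lies in $\hk^{p}$, where $\hk$ is the subalgebra generated by $x,y\in\nk$. Since $\hk\subseteq\nk$ and $\nk$ has class less than $p-1$, we get $\hk^{p}=0$, so $(x+y)^p=x^p+y^p$ directly on $\nk$. The same argument applied inside $\nk_p$, which has the same class, handles the higher iterates $\phi^i$.
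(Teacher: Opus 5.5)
Your proposal is correct and follows the paper's own proof. The paper's one-line argument applies the indecomposability theorem to the subgroups $\nk^{p^i}$, which are definable and connected because the $p$-map is additive under the class bound; you have made that additivity explicit and cited the Fact for nilpotency and the $p$-ideal property.
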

\begin{proof}
    The indecomposability theorem yields that $\langle \nk^{p^i} : i\in \mathbb{N}\rangle=\nk_p$ is definable and connected.
\end{proof}
\begin{lem}
    Let $\gk$ be a $p$-algebra of finite Morley rank and let $\ak$ be an abelian $p$-subalgebra. Then $d(\ak)$, the intersection of all definable subgroups containing $\ak$, is a definable abelian $p$-subalgebra.
\end{lem}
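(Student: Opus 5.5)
The plan is to exhibit $d(\ak)$ as a definable abelian subgroup that is also closed under the bracket (trivially) and under the $p$-map. The structure follows the standard pattern for definable hulls in groups of finite Morley rank: centralizers of sets are definable, and the descending chain condition forces any intersection of definable subgroups to equal a finite subintersection. Thus $d(\ak)$ is definable.

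\emph{Abelianity.} Since $\ak$ is abelian, $\ak\subseteq C_{\gk}(\ak)$, and $C_{\gk}(\ak)$ is a definable subgroup by the DCC on centralizers, so $d(\ak)\subseteq C_{\gk}(\ak)$. Hence $\ak\subseteq C_{\gk}(d(\ak))$, and this centralizer is again a definable subgroup, so $d(\ak)\subseteq C_{\gk}(d(\ak))$. Therefore $d(\ak)$ is abelian. A subgroup is automatically an $\mathbb{F}_p$-subspace, and abelianity gives closure under the bracket, so $d(\ak)$ is an abelian subalgebra.

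\emph{Stability under the $p$-map.} This is the main point, since $\phi$ is not additive in general. On the abelian subalgebra $d(\ak)$, however, the Fact on Jacobson polynomials gives $(x+y)^p=x^p+y^p$ whenever $[x,y]=0$. So $\phi$ restricted to $d(\ak)$ is a definable group endomorphism $d(\ak)\to\gk$. Consider
\[ H=\{x\in d(\ak) : x^p\in d(\ak)\}=\phi|_{d(\ak)}^{-1}(d(\ak)). \]
As the preimage of a subgroup under a definable homomorphism, $H$ is a definable subgroup. Because $\ak$ is a $p$-subalgebra contained in $d(\ak)$, we have $\ak\subseteq H$. Minimality of $d(\ak)$ then gives $H=d(\ak)$, that is, $d(\ak)^p\subseteq d(\ak)$.

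The only delicate step is to restrict $\phi$ to $d(\ak)$, which we already know is abelian, before invoking additivity. Applying the argument to the whole $\gk$ would fail, since $\phi$ is not additive there. Once abelianity is established first, everything else is formal.
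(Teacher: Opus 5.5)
Your proof is correct and is essentially the paper's argument. The paper places $d(\ak)$ inside the definable abelian $p$-subalgebra $Z(C_{\gk}(\ak))$, which gives abelianity and makes $\phi$ additive, and then uses that $\phi^{-1}(d(\ak))\cap Z(C_{\gk}(\ak))$ contains $\ak$; you reach abelianity by the equivalent double-centralizer step and take the preimage inside $d(\ak)$ itself.
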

\begin{proof}
    Note that $Z(C_{\gk}(\ak))$ is a definable abelian $p$-subalgebra containing $\ak$ and therefore also $d(\ak)$. Moreover, $\phi^{-1}(d(\ak))\cap Z(C_{\gk}(\ak))$ contains $\ak$ and therefore also $d(\ak)$.
\end{proof}
We now seek to prove a decomposition theorem for abelian $p$-algebras as a direct sum of a ``toral'' part and a ``unipotent'' part.
\begin{definition}
An abelian $p$-algebra $\tk$ is a $p$-torus if $\tk$ is $p$-divisible, i.e., $\tk^p=\tk$. If $\tk$ is moreover connected of finite Morley rank, we say that $\tk$ is a torus.
\end{definition}
\begin{lem}
A connected $p$-algebra $\tk$ of finite Morley rank is a torus if and only if $\phi|_{\tk}$ is surjective if and only if $\phi|_{\tk}$ has a finite kernel.
\end{lem}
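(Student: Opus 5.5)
Throughout, $\tk$ is assumed abelian, since a torus is by definition an abelian $p$-algebra; this is implicit in the statement. The key observation is that $\phi|_{\tk}$ is then an additive endomorphism. By the first point of the Fact on Jacobson polynomials, if $[x,y]=0$ then $(x+y)^p=x^p+y^p$. Hence $\phi|_{\tk}:\tk\to\tk$ is a definable group homomorphism of $(\tk,+)$, with image $\tk^p$ and kernel $\ker\phi|_{\tk}$, both definable subgroups. The plan is to prove the chain of equivalences (torus) $\Leftrightarrow$ (surjective) $\Leftrightarrow$ (finite kernel) using only additivity of Morley rank and connectedness.

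\emph{Torus $\Leftrightarrow$ surjective.} By the definition, $\tk$ connected of finite Morley rank is a torus exactly when $\tk^p=\tk$. That is literally the statement that $\phi|_{\tk}$ maps $\tk$ onto $\tk$, so this equivalence is immediate.

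\emph{Surjective $\Leftrightarrow$ finite kernel.} The map $\phi|_{\tk}$ is a definable homomorphism of groups of finite Morley rank, so all its fibres are cosets of $K=\ker\phi|_{\tk}$ and have rank $MR(K)$. Axiom C (Additivity) then gives
\[
MR(\tk)=MR(K)+MR(\tk^p).
\]
If $\phi|_{\tk}$ is surjective, then $MR(\tk^p)=MR(\tk)$, so $MR(K)=0$ and $K$ is finite. Conversely, if $K$ is finite, then $MR(\tk^p)=MR(\tk)$. A definable subgroup of full rank has finite index, and $\tk$ is connected, so $\tk^p=\tk$. Finite Morley rank is used here only to be sure that rank is additive and that definable subgroups of equal rank have finite index.

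The only point needing care is the use of additivity of $\phi$. This must come from commutativity of $\tk$, since the $p$-map is not in general additive, as the paper stresses; once that is in place, the rest is routine rank counting.
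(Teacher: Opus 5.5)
Your proof is correct and follows the same route as the paper's. Both use additivity of $\phi$ on the abelian $\tk$, the rank identity $MR(\tk)=MR(\ker\phi)+MR(\mathrm{im}\,\phi)$, and connectedness to pass from full rank to equality. You just spell out steps the paper leaves implicit: the abelian hypothesis and the finite-index argument.
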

\begin{proof}
Since $\phi$ defines a group morphism on $\tk$, we have $MR(\tk)=MR(\ker(\phi))+MR(\mathrm{im}(\phi))$ and the conclusion follows.
\end{proof}
Analogously to groups, we introduce a notion of purity relative to the $p$-map.
\begin{definition}
   Let $\ak$ be an abelian $p$-Lie algebra. A $p$-subalgebra $\bk$ is said to be pure if $\ak^{p^n}\cap\bk=\bk^{p^n}$ for all $n\in \mathbb{N}$.
\end{definition}
\begin{lem}\label{pureté décomposition}
Let $\ak$ be an abelian $p$-algebra (over $\mathbb{F}_p$), let $\bk$ be a pure $p$-subalgebra, and let $x$ be such that $x^{p^n}\in \bk$ but $x^{p^{n-1}}\notin \bk$. Then $\langle \bk,x\rangle_p=\bk\oplus \ck$, where $\ck=\langle y \rangle_p=\langle y,y^p,\dots, y^{p^{n-1}} \rangle$ with $y=(x-b_0)$ for $b_0^{p^n}=x^{p^n}$.
\end{lem}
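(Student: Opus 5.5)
Since $\ak$ is abelian, the $p$-map is additive on $\ak$ by the first Fact above, so $\phi$ is an $\mathbb{F}_p$-semilinear (indeed $\mathbb{F}_p$-linear, as $\lambda^p=\lambda$ on $\mathbb{F}_p$) endomorphism of $\ak$. We may therefore treat $\ak$ as a module over the polynomial ring $\mathbb{F}_p[\phi]$, and $\langle X\rangle_p$ is simply the $\mathbb{F}_p[\phi]$-submodule generated by $X$. With this in mind the proof follows the classical argument for splitting off a cyclic summand over a pure subgroup.

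First I produce $b_0$. We have $x^{p^n}\in \ak^{p^n}\cap \bk$, and purity gives $\ak^{p^n}\cap\bk=\bk^{p^n}$. Hence there is $b_0\in\bk$ with $b_0^{p^n}=x^{p^n}$. Set $y=x-b_0$. By additivity $y^{p^n}=x^{p^n}-b_0^{p^n}=0$. For $k<n$ we get $y^{p^k}=x^{p^k}-b_0^{p^k}$, and $y^{p^{k}}\notin\bk$, since otherwise $x^{p^{k}}\in\bk$ and then $x^{p^{n-1}}\in\bk$ (as $\bk$ is a $p$-subalgebra), contrary to hypothesis. Thus $\langle y\rangle_p=\langle y,y^p,\dots,y^{p^{n-1}}\rangle$, since the subspace on the right is stable under $\phi$. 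Moreover $\langle\bk,x\rangle_p=\langle \bk,y\rangle_p=\bk+\ck$, because $x-y\in\bk$.

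It remains to show $\bk\cap\ck=0$. Suppose $z=\sum_{k=j}^{n-1}\lambda_k y^{p^k}\in\bk$ with $\lambda_j\neq0$ and $\lambda_k\in\mathbb{F}_p$. Applying $\phi^{n-1-j}$ and using additivity together with $\lambda^p=\lambda$ and $y^{p^n}=0$ gives $\lambda_j y^{p^{n-1}}\in\bk$. Dividing by $\lambda_j$ yields $y^{p^{n-1}}\in\bk$, a contradiction. So $z=0$, and the sum $\bk+\ck$ is direct. The same computation shows that the $y^{p^k}$ for $0\le k\le n-1$ are linearly independent, so $\ck$ is cyclic of length exactly $n$.

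Notice that the only place where purity is used is the choice of $b_0$. This is the crucial point, but it is immediate once purity is applied at level $n$. The main care needed is in the direct-sum step, where one must use that the scalars lie in $\mathbb{F}_p$, so that $\phi$ is linear; over a larger perfect field the same argument would go through with $\lambda^{p^m}$ in place of $\lambda$.
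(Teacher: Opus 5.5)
Your proof is correct and follows the paper's route: purity at level $n$ gives $b_0\in\bk$, additivity of the $p$-map on the abelian $\ak$ gives $y^{p^n}=0$ and $\langle \bk,x\rangle_p=\bk+\langle y\rangle_p$, and trivial intersection with $\bk$ comes from applying powers of $\phi$ to a putative relation. The only difference is in the intersection step. You apply $\phi^{n-1-j}$ once to the relation in $y$, using $y^{p^n}=0$ to isolate the lowest term at level $n-1$, whereas the paper rewrites the relation in terms of $x$ and repeatedly strips off the top term using $x^{p^n}\in\bk$. Your version also gives explicitly the linear independence of $y,\dots,y^{p^{n-1}}$, which the paper only assumes later in Proposition~\ref{pureté}.
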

\begin{proof}
 By purity of $\bk$, there exists $b_0\in \bk$ such that $b_0^{p^n}=x^{p^n}$; set $y=(x-b_0)$. Note that $y^{p^n}=0$ by construction; thus, $\langle y\rangle_p=\langle y, y^p,\dots, y^{p^{n-1}}\rangle=\ck$. We first show that $\ck\cap \bk=0$. Let $a_0\cdot y+\dots+a_{i}\cdot y^{p^{i}}\in \ck\cap \bk$, $i\leq n-1$, $a_0,\dots,a_{i}\in \mathbb{F}_p$ and $a_{i}\neq 0$; then $a_0\cdot x+\dots+a_{i}\cdot x^{p^{i}}\in \bk$. We may assume that this sum contains another non-zero term of the form $a_j\cdot x^{p^j}$, with $j<i$. Applying the $p$-map $(n-i)$ times, since $x^{p^n}\in \bk$, we reduce to considering a sum of the form $a_0\cdot x^{p^{n-i}}+\dots+a_j\cdot x^{p^{j+n-i}}$ of strictly smaller length. Finally, by iterating this process, we show that there exists $i<n$ such that $x^{p^i}\in\bk$, a contradiction.
 \\
 Moreover, since $x=y+b_0$, it is clear that $\bk+\langle x\rangle_p=\bk+\langle y \rangle_p$.
\end{proof}
\begin{proposition}\label{pureté} (compare with \cite[Lemma 1.15, I]{ABC})
 Let $\ak$ be an abelian $p$-Lie algebra (over $\mathbb{F}_p$) and let $\bk$ be a pure $p$-subalgebra such that $\ak/\bk$ is $p$-nilpotent of bounded exponent. Then $\ak=\bk\oplus \ck$, where $\ck$ is a $p$-nilpotent $p$-subalgebra of bounded exponent.
\end{proposition}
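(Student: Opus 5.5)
We follow the proof of the group-theoretic analogue \cite[Lemma 1.15, I]{ABC}. Since $\ak$ is abelian, the $p$-map is additive on $\ak$ by the first Fact, so $\phi$ is an $\mathbb{F}_p$-semilinear endomorphism (indeed $\mathbb{F}_p$-linear, since $\lambda^p=\lambda$ on $\mathbb{F}_p$). Thus $\ak$ is a module over the polynomial ring $\mathbb{F}_p[F]$, with $F$ acting as $\phi$, and the $p$-subalgebras of $\ak$ are exactly the $\mathbb{F}_p[F]$-submodules. The hypothesis says that $F^N$ annihilates $\ak/\bk$ for some $N$. We argue by induction on this exponent $N$, mimicking the classical proof that a pure subgroup with bounded quotient is a direct summand (Kulikov/Prüfer).

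\textbf{Step 1: $\ak/\bk$ is a direct sum of cyclic $\mathbb{F}_p[F]$-modules.} The quotient $\ak/\bk$ is a module over $\mathbb{F}_p[F]/(F^N)$, which is a PID quotient; equivalently it is a bounded torsion module over the DVR $\mathbb{F}_p[[F]]$. Bounded torsion modules over a DVR are direct sums of cyclic modules by Prüfer's theorem, and the usual proof via a basic (pure, independent) subset transfers verbatim. So we may write $\ak/\bk=\bigoplus_{j\in J}\langle \bar x_j\rangle_p$, where $\bar x_j$ has exact ``order'' $F^{n_j}$, that is, $x_j^{p^{n_j}}\in\bk$ but $x_j^{p^{n_j-1}}\notin\bk$.

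\textbf{Step 2: lift each cyclic summand using purity.} This is the content of Lemma~\ref{pureté décomposition}. For each $j$, purity gives $b_j\in\bk$ with $b_j^{p^{n_j}}=x_j^{p^{n_j}}$. Set $y_j=x_j-b_j$; then $y_j^{p^{n_j}}=0$, and $y_j$ maps to $\bar x_j$. Put $\ck=\langle y_j^{p^i} : j\in J,\ i<n_j\rangle$. This is a $p$-subalgebra because $\phi$ is additive. Every element of $\ck$ is killed by $p^{N}$-th powers, so $\ck$ is $p$-nilpotent of bounded exponent $N$.

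\textbf{Step 3: show $\ak=\bk\oplus\ck$.} For the sum, $\bk+\ck$ contains every $x_j$, and these generate $\ak$ modulo $\bk$, so $\bk+\ck=\ak$. For directness, note that the projection $\ck\to\ak/\bk$ sends $\langle y_j\rangle_p$ isomorphically onto $\langle\bar x_j\rangle_p$: both are $\mathbb{F}_p[F]/(F^{n_j})$, and the map is surjective between cyclic modules with the same annihilator. Since the target sum is direct, the map $\ck\to\ak/\bk$ is injective, and hence $\ck\cap\bk=0$.

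The main obstacle is Step 1, the cyclic decomposition of $\ak/\bk$, which may be infinite-dimensional. If we want to avoid quoting Prüfer's theorem, we proceed directly. Choose by Zorn a maximal $p$-subalgebra $\ck$ with $\ck\cap\bk=0$, $\ck$ of exponent at most $N$, and $\bk\oplus\ck$ pure in $\ak$; purity is preserved under unions of chains. If $\bk\oplus\ck\neq\ak$, pick $x\notin\bk\oplus\ck$ of minimal order $n$ modulo $\bk\oplus\ck$. Then apply Lemma~\ref{pureté décomposition} to the pure subalgebra $\bk\oplus\ck$ to adjoin $\langle y\rangle_p$. The delicate point is to check that the enlarged sum is still pure. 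For this we would choose $x$ of minimal order in a suitable socle-type sense, as in the proof for abelian $p$-groups: take $x$ with $x^p\in\bk\oplus\ck$, and use purity to reduce to the case $n=1$. We expect this purity verification to be the technical heart of the proof.
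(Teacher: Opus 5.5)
Your Steps 1--3 are correct, but they follow a different route from the paper's proof.

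\textbf{Your route.} You use that $\phi$ is $\mathbb{F}_p$-linear on $\ak$, so $\ak$ is a module over the commutative PID $\mathbb{F}_p[F]$. You then quote Pr\"ufer's theorem to split the bounded quotient $\ak/\bk$ into cyclic summands $\langle\bar x_j\rangle_p$ of exact exponents $n_j$, and lift all generators simultaneously using purity. Directness comes from your annihilator/dimension count on each $\langle y_j\rangle_p$. This is the classical argument that a pure subgroup whose quotient is a direct sum of cyclics is a summand.

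\textbf{The paper's route.} The paper never decomposes $\ak/\bk$. By Zorn's lemma it takes a maximal pure $p$-subalgebra $\mk\supseteq\bk$ of the form $\bk\oplus\mathfrak{d}$. If $\mk\neq\ak$, it adjoins a \emph{single} cyclic piece $\langle y\rangle_p$ via Lemma~\ref{pureté décomposition}, choosing $x$ with $x^{p^n}\in\mk$, $x^{p^{n-1}}\notin\mk$ and $n$ \emph{maximal}. This maximality says $z^{p^n}\in\mk$ for every $z\in\ak$. Hence the component along $\langle y\rangle_p$ of any $z^{p^k}\in\mk\oplus\langle y\rangle_p$ is a $p^k$-th power of an element of $\langle y\rangle_p$. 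Together with purity of $\mk$, this makes $\mk\oplus\langle y\rangle_p$ pure and split over $\bk$, contradicting maximality.

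\textbf{Comparison.} Your version is shorter and conceptually transparent, but it outsources the combinatorial work to a structure theorem for bounded modules. The paper's version is self-contained and works directly with the $p$-map, one element at a time.

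Two minor points. First, the induction on $N$ announced at the start is never used. Second, the optional Zorn sketch in your last paragraph would not work as written: adjoining an element of \emph{minimal} order does not preserve purity. For instance, take $\ak=\langle e,e^p\rangle$ with $e^{p^2}=0$ and $\bk=0$; then $\langle e^p\rangle$ is not pure, since $\ak^p\cap\langle e^p\rangle=\langle e^p\rangle$ while $\langle e^p\rangle^p=0$. The paper's choice of $n$ maximal is exactly what makes the purity check go through. Since Steps 1--3 already give a complete proof, this does not affect your main argument.
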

\begin{proof}
By Zorn's lemma, we can find a pure $p$-subalgebra $\mk$ containing $\bk$ and such that $\mk/\bk$ is split over $\bk$, i.e., there exists a $p$-subalgebra $\mathfrak{d}$ such that $\mk=\bk\oplus \mathfrak{d}$, maximal with respect to these properties. Suppose for contradiction that $\ak\neq \mk$. Let $x\in \ak$ such that $x^{p^n}\in \mk$ and $x^{p^{n-1}}\notin\mk$ with $n$ maximal. Since $\mk$ is pure, by Lemma \ref{pureté décomposition}, we can write $\langle \mk,x\rangle_p=\mk\oplus\ck$, where $\ck=\langle y,\dots, y^{p^{n-1}}\rangle$ with $y=(x-b_0)$ for $b_0^{p^n}=x^{p^n}$. We may also assume that $y,\dots y^{p^{n-1}}$ are linearly independent over $\mathbb{F}_p$.
\\
We show that $\mk\oplus \ck$ is pure; by induction, it suffices to prove that $\ak^p\cap(\mk\oplus \ck)=\mk^p\oplus \ck^p$. Let $z\in \ak$ and suppose $z^p=m+c=m+a_0\cdot y+\dots+a_{n-1}\cdot y^{p^{n-1}}$ with $m\in\mk$ and $c\in \ck$; by virtue of the maximality of $n$, $z^{p^i}\in \mk$ for some $i\in\{1,\dots,n\}$. Consequently, $z^{p^i}=m'+a_0\cdot y^{p^{i-1}}+\dots+a_{n-i}\cdot y^{p^{n-i}}\in \mk$ where $m'\in \mk$; thus, $a_0=\dots=a_{n-i}=0$. Finally, we deduce that $z^p=m+a_{n-i+1}\cdot y^{p^{n-i+1}}+\dots+a_{n-1}\cdot y^{p^{n-1}}$ and we are done by using the purity of $\mk$, a contradiction.
\end{proof}
\begin{rem}
The preceding results are in fact valid over an arbitrary field of characteristic $p>0$.
\end{rem}
\begin{cor}\label{dec abélien}
Let $\ak$ be an abelian $p$-Lie algebra of finite Morley rank. Then the following statements hold:
\begin{enumerate}
    \item $\ak=\uk\oplus \tk$ where $\uk$ is a $p$-nilpotent $p$-subalgebra of bounded exponent, and $\tk$ is a $p$-torus.
    \item $\ak=\uk(+)\tk$ where $\tk$ is a $p$-torus, $\uk$ is a $p$-nilpotent $p$-algebra of bounded exponent, $\uk$ and $\tk$ being definable.
\end{enumerate}
\end{cor}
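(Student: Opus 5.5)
The plan is to obtain both statements from Proposition~\ref{pureté}, using the descending chain condition to produce the toral part. Since $\ak$ is abelian, Fact 2.4(1) shows that $\phi|_{\ak}$ is additive and $\mathbb{F}_p$-semilinear, hence a group endomorphism. The images $\phi^n(\ak)=\ak^{p^n}$ are then subgroups (in fact $p$-subalgebras), and they are definable because $\phi$ is definable. They form a descending chain $\ak\supseteq \ak^p\supseteq \ak^{p^2}\supseteq\cdots$, so by the DCC on definable subgroups it stabilises at some $N$. Set $\tk=\ak^{p^N}$. Then $\tk^p=\ak^{p^{N+1}}=\tk$, so $\tk$ is a definable $p$-torus.

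\textbf{Purity of $\tk$.} We need $\ak^{p^n}\cap\tk=\tk^{p^n}$ for every $n$. The right-hand side equals $\tk$ since $\tk$ is $p$-divisible. For the left-hand side, if $n\le N$ then $\tk\subseteq\ak^{p^n}$; if $n\geq N$ then $\ak^{p^n}=\tk$ by stabilisation. In both cases the intersection is $\tk$.

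\textbf{Bounded exponent of the quotient.} Any $x\in\ak$ satisfies $x^{p^N}\in\tk$, so $\ak/\tk$ is $p$-nilpotent of exponent at most $N$. Proposition~\ref{pureté} now gives $\ak=\tk\oplus\uk$ with $\uk$ a $p$-nilpotent $p$-subalgebra of bounded exponent. This proves (1).

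\textbf{Definability (part (2)).} The summand $\uk$ supplied by Zorn's lemma need not be definable. I would replace it with the canonical choice $\uk_0=\ker(\phi^{N})\cap\ak$, which is a definable $p$-subalgebra since $\phi$ is additive on $\ak$, and which is $p$-nilpotent of exponent $N$.

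We must show $\ak=\uk_0+\tk$ and determine what the intersection is. Given $x\in\ak$, we have $x^{p^N}\in\tk=\tk^{p^N}$, so there is $t\in\tk$ with $t^{p^N}=x^{p^N}$. Then $x-t\in\uk_0$, hence $\ak=\uk_0+\tk$. The intersection $\uk_0\cap\tk$ is the kernel of $\phi^N$ on $\tk$, and I would argue it is finite. The restriction $\phi^N|_{\tk}$ is a surjective definable endomorphism, so Axiom C gives $MR(\ker)=MR(\tk)-MR(\tk)=0$. A rank-zero definable set is finite. Hence $\ak=\uk_0(+)\tk$ with both summands definable and finite intersection, which is what I take the notation $(+)$ to mean.

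The main obstacle is reconciling the two statements. In (1) the sum is direct but the complement is non-definable. In (2) the complement is definable but the sum is only almost direct. This mismatch is why the corollary lists both, and the key step is the rank computation showing the kernel of $\phi^N$ on $\tk$ is finite.

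One further point is whether $\tk$ should be connected in (2). If connectedness is intended, I would replace $\tk$ by $\tk^\circ$. This is still $p$-divisible, because $\phi(\tk^\circ)$ is a connected subgroup of finite index in $\tk$ and so equals $\tk^\circ$. The sum then absorbs the finite index.
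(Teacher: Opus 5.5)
Your proposal is correct and follows the paper's proof. For (1) you take $\tk=\mathrm{im}(\phi^N)$ where the chain $\mathrm{im}(\phi^n)$ stabilises, check purity and bounded exponent of $\ak/\tk$, and invoke the purity proposition; for (2) you write $\ak=\ker(\phi^N)+\mathrm{im}(\phi^N)$ as the paper does, and you obtain finiteness of the intersection by applying additivity to the surjection $\phi^N|_{\tk}\colon\tk\to\tk$ rather than by comparing two expressions for $MR(\ak)$, which is an equivalent rank computation.
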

\begin{proof}
    By the DCC on definable subgroups, there exists $N\in \mathbb{N}$ such that the sequence $\mathrm{im}(\phi^n)$ stabilises from rank $N$ onwards. We set $\tk=\mathrm{im}(\phi^N)$; since $\ak/\tk$ is of exponent $N$ and since $\tk$ is pure by construction, we are done using the previous proposition.
    \\
    For the second assertion, we can give a more direct proof that avoids Proposition \ref{pureté}. Since $\mathrm{im}(\phi^N)=\mathrm{im}(\phi^{2N})$, for any $x$ there exists $y$ such that $\phi^N(x)=\phi^{2N}(y)$; thus, $(x-\phi^N(y))\in \ker(\phi^N)$. Consequently, $x=(x-\phi^N(y))+\phi^N(y)$ and therefore $\ak=\ker(\phi^N)+\mathrm{im}(\phi^N)$; the intersection is finite because $MR(\ak)=MR(\ker(\phi^N))+MR(\mathrm{im}(\phi^N))=MR(\ker(\phi^N))+MR(\mathrm{im}(\phi^N))-MR(\ker(\phi^N)\cap \mathrm{im}(\phi^N))$.
\end{proof}
We deduce two corollaries that generalize the known situation for groups of finite Morley rank.
\begin{cor}
    Let $\gk$ be a $p$-algebra of finite Morley rank that contains no non-trivial $p$-nilpotent elements. Then for every $x\in \gk$ there exists a unique element $y$ such that $y^p=x$.
\end{cor}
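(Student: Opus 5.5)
Fix $x\in\gk$ and work in the abelian $p$-subalgebra generated by $x$. Let $\ak=d(\langle x\rangle_p)$, the definable closure of $\langle x\rangle_p=\langle x,x^p,x^{p^2},\dots\rangle$. Since $[x^{p^i},x^{p^j}]=0$, the algebra $\langle x\rangle_p$ is abelian, so by the lemma on $d(\ak)$ the set $\ak$ is a definable abelian $p$-subalgebra of finite Morley rank containing $x$. On $\ak$ the $p$-map $\phi$ is additive (Fact on $s_i$, since $\ak$ is abelian). It is therefore a definable endomorphism of the group $(\ak,+)$.

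\emph{Uniqueness.} The kernel of $\phi|_{\ak}$ consists of elements $z$ with $z^p=0$, which are $p$-nilpotent; by hypothesis it is trivial. Uniqueness should, however, be proved in all of $\gk$, not only in $\ak$. Suppose $y^p=z^p=x$ with $y,z\in\gk$. Then $[y,x]=[y,y^p]=\ad_y^p(y)=0$, and similarly $[z,x]=0$. We do not know a priori that $[y,z]=0$, so I would reduce to existence in a suitable abelian algebra. Work in $\bk=Z(C_{\gk}(y))$, a definable abelian $p$-subalgebra (by the Fact on centralizers) containing $y$ and all $y^{p^i}$, hence $x$. If existence holds inside $\bk$ for $x$, with a root $y'$, then $\phi$ is injective on $\bk$ and $y^p=x=y'^p$ force $y=y'$. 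So $y$ is the unique root of $x$ lying in the abelian algebra $\bk$, and likewise $z$ is the unique root in $Z(C_{\gk}(z))$. This still does not compare $y$ with $z$, and closing this gap is the main obstacle. My plan is to show that any root $y$ of $x$ lies in $Z(C_{\gk}(x))$, which is a single abelian algebra containing both $y$ and $z$; injectivity of $\phi$ there then gives $y=z$. We have $y\in C_{\gk}(x)$. Now $y$ itself lies in $\ak_y:=d(\langle y\rangle_p)$, on which $\phi$ is an injective definable endomorphism, hence surjective, since an injective definable endomorphism of a group of finite Morley rank is surjective (compare ranks as in the torus lemma, using DCC and the count of connected components). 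Iterating, $y=w^{p^k}$ with $w\in\ak_y$ for every $k$, so $y\in\bigcap_k \operatorname{im}(\phi^k|_{\ak_y})$. I would then try to express $y$ as a definable function of $x$ via the inverse of $\phi$ restricted to $d(\langle x\rangle_p)$, showing that $d(\langle x\rangle_p)\supseteq d(\langle y\rangle_p)^{p}$ has finite index. The alternative is to use $\ad_y^p=\ad_x$ together with $y\in d(\langle x\rangle_p)$, which follows from the stabilisation argument in Corollary~\ref{dec abélien}: $\ak_y$ has trivial unipotent part and is a $p$-torus, and $\phi$ restricts to an automorphism of it mapping $\ak_y$ into $\ak$.

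\emph{Existence.} Apply Corollary~\ref{dec abélien} to $\ak$, writing $\ak=\uk\oplus\tk$ with $\uk$ $p$-nilpotent. By hypothesis $\uk=0$, so $\ak=\tk$ is a $p$-torus, i.e. $\ak^p=\ak$. Hence there is $y\in\ak$ with $y^p=x$. Equivalently, $\phi|_{\ak}$ is an injective definable endomorphism, so its image has full rank, and the stabilised image $\operatorname{im}(\phi^N)$ equals $\ak$ because $\ker(\phi^N)=0$ and $\ak=\ker(\phi^N)+\operatorname{im}(\phi^N)$. This gives existence, and uniqueness within $\ak$.
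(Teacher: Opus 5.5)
Your existence argument matches the paper's. In the decomposition $\ak=\uk\oplus\tk$ of $\ak=d(\langle x\rangle_p)$ the $p$-nilpotent part vanishes, so $\ak$ is a $p$-torus and $x=y^p$ for some $y\in\ak$.

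The uniqueness part, however, is not proved. You correctly see that you must compare two arbitrary roots $y,z$ in $\gk$, but you leave this as ``the main obstacle'' and only sketch plans. The first is something you ``would try''. The alternative simply asserts that $\phi$ maps $\ak_y=d(\langle y\rangle_p)$ into $\ak$. Since you have shown $\phi(\ak_y)=\ak_y$, that assertion is exactly the inclusion $\ak_y\subseteq\ak$ you are trying to establish, so as written the argument is circular.

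The missing observation is that you do not need every root to lie in a common abelian algebra; it suffices that the root you constructed does. $Z(C_{\gk}(x))$ is a definable $p$-subalgebra containing $x$, so it contains $\ak=d(\langle x\rangle_p)$; this is exactly how the lemma on $d(\cdot)$ is proved. Hence your $y\in\ak$ is central in $C_{\gk}(x)$. Any other root $z$ satisfies $[z^p,z]=\ad_z^p(z)=0$, i.e.\ $z\in C_{\gk}(x)$, which you already noted. So $[y,z]=0$, hence $(y-z)^p=y^p-z^p=0$, and the absence of non-trivial $p$-nilpotent elements forces $y=z$. This is the paper's proof.

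Your detour through $\bk=Z(C_{\gk}(y))$ was one step from this. Since $C_{\gk}(y)\subseteq C_{\gk}(x)$, you get $Z(C_{\gk}(x))\subseteq Z(C_{\gk}(y))$. So the canonical root in $\ak$ lies in $\bk$, and injectivity of $\phi$ on $\bk$ shows that every root equals it.
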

\begin{proof}
    Let $x\in \gk$; consider the definable abelian $p$-algebra $\ak=d(x)$. Since $\ak$ contains no non-trivial $p$-nilpotent elements, $\ak$ is $p$-divisible by Corollary \ref{dec abélien}. Thus, $x=y^p$ for some $y\in \ak\subseteq Z(C_{\gk}(x))$. If $z$ satisfies $z^p=x$, then $z\in C_{\gk}(x)$ because $[z^p,z]=\ad_z^{p-1}([z,z])=0$; thus, $[z,y]=0$. In particular, $(y-z)^p=y^p-z^p=0$ and therefore $y=z$.
\end{proof}
\begin{cor}
    Let $\gk$ be a $p$-algebra of finite Morley rank. Let $f:\hk\rightarrow \mk$ be a definable $p$-morphism. If $x\in \hk$ and $f(x)$ is $p$-nilpotent, then there exists a $p$-nilpotent element $x'\in \hk$ such that $f(x)=f(x')$.
\end{cor}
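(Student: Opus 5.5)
Fix $x\in\hk$ with $f(x)$ $p$-nilpotent, say $f(x)^{p^m}=0$. The plan is to use the Jordan-type decomposition of Corollary \ref{dec abélien} inside a definable abelian $p$-subalgebra containing $x$, and to transport it through $f$.

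First, I take $\ak=d(x)$, the definable closure of the abelian $p$-subalgebra $\langle x\rangle_p$. By the earlier lemma on $d(\ak)$, $\ak$ is a definable abelian $p$-subalgebra, and it lies inside $\hk$ because $\hk$ is definable. Corollary \ref{dec abélien}(2) then gives $\ak=\ker(\phi^N)+\mathrm{im}(\phi^N)$, where $\phi^N$ is additive on $\ak$ and $\tk=\mathrm{im}(\phi^N|_{\ak})$ is a torus: it is $p$-divisible, since the images stabilise from $N$ on. I write $x=u+t$ with $u\in\ker(\phi^N)$, which is $p$-nilpotent, and $t\in\tk$. Since $\ak$ is abelian, $f$ restricted to $\ak$ is a morphism of abelian $p$-algebras, and $f(\tk)$ is again $p$-divisible: $f(\phi^N(\tk))=\phi^N(f(\tk))$ and $\phi^N(\tk)=\tk$.

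Next, $f(x)=f(u)+f(t)$, and the terms commute, so for large $k$ we get $f(t)^{p^k}=f(x)^{p^k}-f(u)^{p^k}=0$. Hence $f(t)$ is a $p$-nilpotent element of the abelian $p$-divisible algebra $f(\tk)$. The aim is now to find $t'\in\tk$ with $t'$ $p$-nilpotent and $f(t')=f(t)$; then $x'=u+t'$ is $p$-nilpotent, because $u$ and $t'$ commute in $\ak$, and it satisfies $f(x')=f(x)$.

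The main obstacle is this last step, since $\tk$ may contain $p$-nilpotent elements only in its finite kernel $\ker(\phi|_{\tk})$. First I use divisibility to reduce: for $s$ large, $f(t)=f(\phi^{s}(t_s))$ with $t_s\in\tk$, which gives $\phi^s(f(t_s))=f(t)$. The argument would go by rank. The set $K=\ker(\phi^{M}|_{\tk})$ is finite for every $M$, because $\phi|_{\tk}$ has finite kernel by the lemma characterising tori, and likewise the $p$-nilpotent part of $f(\tk)$ at bounded exponent is finite. The map $f\colon\tk\to f(\tk)$ is additive and commutes with $\phi$. Take $M$ with $f(t)^{p^M}=0$ and set $\tk_M=\{s\in\tk : f(s)\in\ker\phi^M\}$, a definable subgroup with $\phi^M(\tk_M)\subseteq\ker f|_{\tk}$. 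I would aim to show $\tk_M=\ker(f|_{\tk})+\ker(\phi^M|_{\tk})$ by comparing ranks via connectedness: $\phi^M$ maps $(\ker f|_{\tk})^\circ$ onto itself, since it is a torus, being $p$-divisible as a connected definable subgroup of $\tk$ with finite $\phi$-kernel. So for $s\in\tk_M$ there is $k$ in the connected component of $\ker f$, up to a finite correction handled by enlarging $M$ and iterating, with $\phi^M(s)=\phi^M(k)$. Then $s-k\in\ker\phi^M$ is $p$-nilpotent and $f(s-k)=f(s)$. Applying this to $s=t$ gives the required $t'=t-k$. The finite correction is where care is needed; I expect to absorb it by replacing $M$ by a larger multiple, using that $\ker f|_{\tk}/(\ker f|_{\tk})^\circ$ is a finite $p$-group and hence is killed by some power of $\phi$.
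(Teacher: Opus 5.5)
There is a genuine gap in the final step. Your reduction to finding a $p$-nilpotent $t'\in\tk$ with $f(t')=f(t)$ is sound, though it only restates the corollary for an element of a $p$-torus. The argument you sketch for that step relies on a false claim: that $\ker f|_{\tk}/(\ker f|_{\tk})^{\circ}$, being a finite $p$-group, is killed by some power of $\phi$. Being an elementary abelian $p$-group says nothing about the additive endomorphism $\phi$, which can be bijective on such a quotient.

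Concretely, take $\hk=\K\oplus\mathbb{F}_{p}$ with the Frobenius on each factor, $f$ the first projection, and $x=(0,1)$. Then $d(x)=\{0\}\oplus\mathbb{F}_p$ and $\phi(x)=x$, so $u=0$ and $t=x$. Here $\ker f|_{\tk}=\tk$ is finite with trivial connected component, and $\phi^{M}(t)=t$ never lies in $(\ker f|_{\tk})^{\circ}=0$. So no $k$ of the kind you describe exists. Yet the conclusion holds with $t'=0$, i.e.\ $k=t$, which lies in $\ker f$ but outside its connected component. The connected component is the wrong subgroup in which to extract $p^M$-th roots.

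The missing idea is to use the $p$-divisible part of the kernel instead. By the abelian decomposition, $\ker f|_{\tk}=\ker(\phi^{N'})+D$ with $D=\phi^{N'}(\ker f|_{\tk})$ $p$-divisible. Writing $\phi^M(t)=a+b$ accordingly and choosing $k\in D$ with $\phi^M(k)=b$ gives $\phi^{M+N'}(t-k)=0$ and $f(t-k)=f(t)$.

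Once you see this, the preliminary splitting $x=u+t$ inside $d(x)$ is unnecessary, and you arrive at the paper's proof:
\begin{itemize}
\item $f(x)^{p^n}=0$ gives $x^{p^n}\in\ker f$, so $d(x^{p^n})\subseteq\ker f$ splits as $\ck\oplus\tk$ with $\tk$ a $p$-torus and $\ck$ $p$-nilpotent.
\item Write $x^{p^n}=t+c$ and pick $t'\in\tk$ with $(t')^{p^n}=t$.
\item Since $t'\in d(x^{p^n})\subseteq Z(C_{\gk}(x^{p^n}))$ commutes with $x$, the element $x'=x-t'$ satisfies $(x')^{p^n}=c$ and $f(x')=f(x)$.
\end{itemize}
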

\begin{proof}
Consider $\ker(f)$; there exists an integer $n$ such that $x^{p^n}\in \ker(f)$. Now, $d(x^{p^n})=\ck\oplus \tk\subseteq \ker(f)$ with $\tk$ a $p$-torus and $\ck$ $p$-nilpotent. Thus, $x^{p^n}=t+c$; since $\tk$ is $p$-divisible, there exists $t'\in\tk$ such that $(t')^{p^n}=t$ and consequently, $(x-t')^{p^n}=c$. It suffices to take $x'=x-t'$ to conclude.
\end{proof}

\subsection{Tori and semi-simplicity}
In this subsection, we prove decomposition results for definable $p$-modules relative to the action of a $p$-divisible and nilpotent $p$-algebra of finite Morley rank.
\begin{definition}
    Let $\gk$ be a $p$-Lie algebra of finite Morley rank and let $V$ be a definable connected elementary abelian $p$-group. We say that $V$ is a $p$-module for $\gk$ if there exists a $p$-morphism from $\gk$ to $\mathrm{End}_{\mathrm{def}}(V)$.
\end{definition}
We have an analogue of Maschke's theorem in our context.
\begin{fait}\label{Mas}\cite[Corollary 3.13]{Zam2}
Let $\tk$ be a torus and let $V$ be a definable connected $p$-module for $\tk$. Suppose that $V^{\tk}=0$. Then $V=V_1\oplus\dots\oplus V_r$, where each $V_i$ is a definable connected irreducible $\tk$-module (in the definable connected category).
\end{fait}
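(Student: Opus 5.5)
The statement is a Maschke-type result cited from \cite[Corollary 3.13]{Zam2}. Here is how I would reprove it with the tools at hand. The plan is to show that $\tk$ acts on $V$ by commuting, semisimple-like endomorphisms in the definable connected category. Then any definable connected submodule has a definable connected $\tk$-invariant complement, and the decomposition follows by induction on $MR(V)$.

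\textbf{Step 1 (toral elements act without nilpotent part).} Let $\rho:\tk\to\mathrm{End}_{\mathrm{def}}(V)$ be the $p$-morphism. Since $\tk$ is abelian, $\rho(\tk)$ is a commuting family. Because $\tk=\tk^p$, for each $t$ and each $n$ we can write $t=s^{p^n}$, so $\rho(t)=\rho(s)^{p^n}$. Take $n$ large relative to $MR(V)$, and note that $\mathrm{im}(\rho(s)^{m})$ and $\ker(\rho(s)^{m})$ stabilise by DCC/ACC on definable subgroups. I would then show the Fitting decomposition: for each $x$, $V=\ker(\rho(x)^N)\oplus \mathrm{im}(\rho(x)^N)$ as definable groups, with the same rank computation as in Corollary~\ref{dec abélien}, and the first summand is $\tk$-invariant. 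The key claim is that the nilpotent Fitting component of $x$ equals $\ker\rho(x)$. If $\rho(x)^N$ kills a piece $W$, write $x=s^{p^n}$. On the $\tk$-invariant piece $W$, $\rho(s)$ is nilpotent as well, since it shares the generalized kernel (apply Fitting to $s$). Hence $\rho(x)=\rho(s)^{p^n}$ with $p^n\geq N$ vanishes on $W$.

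\textbf{Step 2 (reduction to a common-kernel-free situation).} Since $V^{\tk}=0$, by Step 1 and the descending chain condition there are finitely many $x_1,\dots,x_k\in\tk$ with $\bigcap\ker\rho(x_i)$ finite, hence trivial after passing to connected components. By Step 1 each $\rho(x_i)$ is then surjective on the relevant connected pieces. I would split $V$ successively using Fitting decompositions for the commuting $\rho(x_i)$. This yields a decomposition of $V$ into definable connected $\tk$-invariant pieces on which each $\rho(x_i)$ is either zero or invertible modulo finite. Fact~\ref{ccohomogie et sous-module} guarantees that every definable connected subquotient still has no $\tk$-fixed points, so the induction on rank stays inside the hypothesis $V^{\tk}=0$.

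\textbf{Step 3 (complements and conclusion).} Let $W\subseteq V$ be a minimal definable connected $\tk$-submodule. I would produce a complement by choosing $x\in\tk$ acting invertibly on $W$ and considering the definable endomorphism algebra generated by $\rho(\tk)$. On the piece where $\rho(x)$ is invertible, this is a definable commutative ring without nilpotents. By Step 1, it is a finite product of definable fields by Wedderburn/Zilber-type arguments. The idempotents of this ring then split $V$ into $W\oplus W'$, and induction on $MR(V)$ finishes the proof.

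The main obstacle is Step 3, namely ensuring that the ring generated by $\rho(\tk)$ is definable and semisimple. If that fails, the first thing I would try is to instead show $H^1$-vanishing, i.e.\ that extensions of irreducible modules split. I would do this by averaging via the surjectivity of $\phi|_{\tk}$, using Fact~\ref{ccohomogie et sous-module} as the cohomological input, as in \cite{Zam2}.
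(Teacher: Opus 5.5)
There is a genuine gap in Step 3, and it carries the whole content of the statement. You assert that the ring $R$ generated by $\rho(\tk)$ in $\mathrm{End}(V)$ is a definable finite product of definable fields, but ``by Step 1 \dots\ Wedderburn/Zilber-type arguments'' does not prove this. Three separate arguments are missing.

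(i) Definability. $\mathrm{End}_{\mathrm{def}}(V)$ is not a definable object, and elements of $R$ are sums of unboundedly many products of unbounded length. You need something like the following. By Proposition~\ref{cohomologie}, $V=[\tk,V]$. Each $Rv$ is definable and connected by indecomposability, so the ACC on definable connected subgroups gives $v_1,\dots,v_k$ with $Rv_1+\dots+Rv_k=V$. Then $r\mapsto(rv_1,\dots,rv_k)$ embeds $R$ into $V^k$. Indecomposability, together with stabilisation of the chain of subgroups generated by products of length at most $n$, then makes $R$ interpretable.

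(ii) Reducedness. Step 1 controls only the individual operators $\rho(t)$, not sums of products. What works is this: Frobenius is a ring endomorphism of the commutative characteristic-$p$ ring $R$, and its image contains $\rho(\tk)^p=\rho(\tk^{[p]})=\rho(\tk)$. Hence $R=R^{p^n}$ for all $n$. Write a nilpotent $r$ as $u^{p^n}$ with $p^n\geq MR(V)$. Then $u$ is nilpotent, and a nilpotent definable endomorphism of the connected group $V$ satisfies $u^{MR(V)}=0$, so $r=0$.

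(iii) Structure. DCC on the definable ideals $r^nR$, combined with reducedness, gives von Neumann regularity. This yields finitely many primitive idempotents and $R\cong K_1\times\dots\times K_m$. Finite factors must then be ruled out using that $\tk$ is connected and $V^{\tk}=0$.

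Even granting all this, idempotents do not give you $W\oplus W'$; they only cut $V$ into the pieces $e_jV$. Moreover, $R$ can be a field while $V$ is reducible. Take $\tk=\K$ acting on $V=\K\oplus\K$ by $t\cdot(v_1,v_2)=(tv_1,t^pv_2)$. This is a $p$-module with $V^{\tk}=0$, yet $R=\{\mathrm{diag}(c,c^p):c\in\K\}\cong\K$ has no nontrivial idempotents. The complement has to come from linear algebra over $K_j$ inside each $e_jV$, whose $\tk$-submodules are exactly its $K_j$-subspaces. Splitting each $e_jV$ into lines then gives the statement directly, with no induction needed.

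Smaller points:
\begin{itemize}
\item Step 2 is never used in Step 3.
\item The Fitting decomposition of Step 1 is only almost direct. It becomes direct once you note that a finite $\tk$-submodule is centralised by the connected $\tk$, hence is $0$.
\item Your $H^1$ fallback is not a proof either. There is no definable module $\mathrm{Hom}(V/W,W)$ to average over. The decisive case is an extension of $W$ by a module isomorphic to $W$, and that is exactly where $\rho(t)=\rho(s)^{p^n}$ must be used to kill the nilpotent off-diagonal part. Your proposal never carries this out.
\end{itemize}
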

The following proposition will play an important role in proving our structure theorems.
\begin{proposition}\label{cohomologie}
    Let $\nk$ be a connected $p$-divisible and nilpotent $p$-algebra of finite Morley rank and let $V$ be a definable connected $p$-module for $\nk$. Then $V=V^{\nk}+[\nk,V]$.
\end{proposition}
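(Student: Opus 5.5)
We argue by induction on $MR(V)$, combining two mechanisms. The first uses $p$-divisibility to see that a nilpotent action is trivial. The second uses Fact \ref{ccohomogie et sous-module} to pass fixed points to quotients.

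Set $W=[\nk,V]$. This is the subgroup generated by the sets $[x,V]=\ad_x(V)$ for $x\in\nk$. Each of these sets is the image of the connected group $V$ under a definable group morphism, so it is a definable connected subgroup. By Zilber's indecomposability theorem, $W$ is therefore a definable connected $\nk$-submodule of $V$. Similarly $V^{\nk}$ is a definable submodule, and we may replace it by its connected component where convenient.

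The key observation concerns a definable connected $p$-module $A$ on which $\nk$ acts nilpotently. Suppose $\ad_x^k(A)=0$ for all $x$, with $k$ uniform. Let $\rho$ denote the representation. For $x\in\nk$, write $x=y^{p^m}$ with $p^m\ge k$; this is possible because $\nk^p=\nk$. Then $\rho(x)=\rho(y)^{p^m}=0$ on $A$, since $\rho$ is a $p$-morphism into $\mathrm{End}_{\mathrm{def}}(A)$. So \textbf{$p$-divisibility forces a nilpotent action to be trivial}. This is precisely where the hypothesis on $\nk$ enters, and the same trick makes any quotient $V/C$ with $\nk$ acting nilpotently (in the sense of a finite composition series of trivial factors) a trivial module. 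More precisely, since $\rho(y)^{p}=\rho(y^p)$, we have $\rho(x)=\rho(y)^{p^m}$, and $\rho(y)^{p^m}$ kills any module with a $\nk$-series of length at most $p^m$.

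Now the induction. If $V^{\nk}$ is finite, Fact \ref{ccohomogie et sous-module} gives $(V/W)^{\nk}=0$. But $\nk$ acts trivially on $V/W$ by definition of $W$, so $V/W=0$, that is $V=W$, and we are done.

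Otherwise let $A=(V^{\nk})^{\circ}\neq 0$, and consider the definable connected module $V/A$ of smaller rank. By induction, $V/A=(V/A)^{\nk}+[\nk,V/A]$. Pull back $(V/A)^{\nk}$: its preimage $B$ satisfies $[\nk,B]\subseteq A$ and $[\nk,A]=0$. So $\nk$ acts on $(B)^{\circ}$ with $\ad_x^2=0$ for all $x$, and by the observation above (using $p>2$) it acts trivially, giving $B^{\circ}\subseteq V^{\nk}$. Hence $V=B+W+A\subseteq V^{\nk}+W$ up to the finite index coming from connected components. Since $V$ is connected and $V^{\nk}+W$ is a definable subgroup of finite index, equality holds.

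The main obstacle is the precise use of the $p$-morphism property. One must make sure that $\rho(y)^{p^m}$, computed in the associative algebra $\mathrm{End}_{\mathrm{def}}(V)$, really is $\rho(y^{p^m})$. Restricted to the submodule, this gives the vanishing. A second point of care is handling connected components and the finite-index bookkeeping.
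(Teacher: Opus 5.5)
Your argument is correct and uses the same two ingredients as the paper's proof. First, $p$-divisibility forces an action with $\rho(x)^2=0$ for all $x\in\nk$ to be trivial; this is what puts the preimage of the fixed points of $V/A$, or of $V/V^{\nk}$, inside $V^{\nk}$. Second, Fact \ref{ccohomogie et sous-module}, applied to a fixed-point-free module, kills a quotient on which $\nk$ acts trivially.

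The one slip is in your finite case. Fact \ref{ccohomogie et sous-module} needs $V^{\nk}=0$, not merely finite, so you should first pass to $V/V^{\nk}$. That quotient is fixed-point-free by your $k=2$ observation, and the Fact then gives $V=V^{\nk}+[\nk,V]$. Since this step works for arbitrary $V^{\nk}$, the induction on $MR(V)$ is superfluous, and dropping it leaves exactly the paper's direct proof.
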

\begin{proof}
First, by the indecomposability theorem, the submodule $[\nk, V]$ is definable and connected. We prove that $(V/V^{\nk})^{\nk}=0$. Let $v\in V$ be such that $[n_1,[n_2,v]]=0$ for all $n_1,n_2\in \nk$; then, in particular, for all $n\in \nk$, $\ad^2_n(v)=0$ and therefore $\ad_n^p(v)=[n^p,v]=0$ and the conclusion follows from $p$-divisibility. Suppose for contradiction that $V/(V^{\nk}+[\nk,V])\neq 0$. Now, $(V/V^{\nk})/(V^{\nk}+[\nk,V]/V^{\nk})\simeq V/(V^{\nk}+[\nk,V])$, but $\nk$ acts trivially on $V/(V^{\nk}+[\nk,V])$, a contradiction by Fact \ref{ccohomogie et sous-module}.
\end{proof}
\begin{cor}\label{rigidity torus}
Let $\gk$ be a $p$-algebra of finite Morley rank and let $\tk$ be a torus. Then $N_{\gk}(\tk)^{\circ}=C_{\gk}(\tk)^{\circ}$.
\end{cor}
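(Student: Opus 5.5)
We will apply Proposition \ref{cohomologie} to the $p$-algebra $\tk$ acting by the adjoint representation on a suitable definable connected module. Set $\nk=N_{\gk}(\tk)^{\circ}$. It is a definable connected $p$-subalgebra, because normalizers are $p$-subalgebras and taking the connected component should preserve the $p$-structure here; we will check this last point, but only the module structure will really matter. The inclusion $C_{\gk}(\tk)^{\circ}\subseteq N_{\gk}(\tk)^{\circ}$ is clear, so the content is the reverse inclusion.

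First, $\tk\subseteq C_{\gk}(\tk)^\circ\subseteq\nk$, since $\tk$ is abelian and connected. Hence $\nk$ is a $p$-module for $\tk$ via $t\mapsto \ad_t|_{\nk}$. This is a $p$-morphism because $\ad_{t^p}=\ad_t^p$ and $\nk$ is stable under $\ad_t$ for $t\in\tk$. It is a definable connected elementary abelian $p$-group, so Proposition \ref{cohomologie} applies, with $\tk$ connected, $p$-divisible and abelian, hence nilpotent. We obtain
\[\nk=\nk^{\tk}+[\tk,\nk].\]

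Next, $[\tk,\nk]\subseteq\tk$ because $\nk$ normalizes $\tk$. So for $t\in\tk$ and $n\in\nk$ we have $[t,n]\in\tk$, and therefore $\ad_t^2(n)=[t,[t,n]]=0$ since $\tk$ is abelian. As $p>2$, this gives $[t^p,n]=\ad_t^p(n)=0$. Since $\tk=\tk^p$, every element of $\tk$ is of the form $t^p$, so $[\tk,n]=0$ for every $n\in\nk$. This already shows $\nk\subseteq C_{\gk}(\tk)$, and connectedness then gives $\nk\subseteq C_{\gk}(\tk)^\circ$. In fact, after this observation Proposition \ref{cohomologie} is not even needed: it only reconfirms that $[\tk,\nk]=0$.

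The only step requiring any care is that $\ad_t^2=0$ on $\nk$ forces $\ad_{t^p}=0$, together with $p$-divisibility; both are immediate. There is no real obstacle.
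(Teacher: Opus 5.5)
Your proof is correct, but its decisive step is not the paper's.

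\textbf{The paper's route.} The paper applies Proposition \ref{cohomologie} to the $\tk$-module $V=N_{\gk}(\tk)^{\circ}$. This gives $V=C_V(\tk)+[\tk,V]$, and the conclusion follows from $[\tk,V]\subseteq\tk\subseteq C_{\gk}(\tk)$. That route goes through the indecomposability theorem and the cohomological Fact \ref{ccohomogie et sous-module}. It therefore genuinely needs $V$ to be definable and connected, which is why the statement is phrased with connected components.

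\textbf{Your route.} You use directly that $[t,n]\in\tk$ forces $\ad_t^2(n)=0$, hence $[t^p,n]=\ad_t^p(n)=0$, and you finish by $p$-divisibility. This is exactly the elementary computation inside the proof of Proposition \ref{cohomologie}. As you observe, it makes the decomposition $V=V^{\tk}+[\tk,V]$ superfluous.

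\textbf{What your route buys.} Your computation never uses connectedness, definability, or finite Morley rank. It therefore applies verbatim to every $n\in N_{\gk}(\tk)$, not just to $N_{\gk}(\tk)^{\circ}$. It gives $N_{\gk}(\tk)=C_{\gk}(\tk)$ for any abelian $p$-divisible $p$-subalgebra $\tk$ of any $p$-Lie algebra, and the equality of connected components follows at once.

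\textbf{Suggested cleanup.} You can drop the first half of your write-up. In particular, the promised check that $N_{\gk}(\tk)^{\circ}$ is a $p$-subalgebra is never carried out and is irrelevant. State the argument for the full normalizer instead.
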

\begin{proof}
 It suffices to consider the $\tk$-module $N_{\gk}(\tk)^{\circ}$ and to apply Proposition \ref{cohomologie}.
\end{proof}

\begin{cor}\label{espace de poids}
Let $\tk$ be a torus and let $V$ be a definable connected $p$-module for $\tk$. Suppose that $V^{\tk}\cap[\tk,V]=0$. Then $V=V^{\tk}\oplus V_1\oplus\dots\oplus V_r$. Moreover, there exist definable fields $\K_i$ and definable $p$-morphisms $\rho_i : \tk\rightarrow \K_i$, where $i\in\{1,\dots,r\}$.
\end{cor}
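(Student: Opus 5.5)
By Proposition \ref{cohomologie}, applied to the torus $\tk$ (which is connected, abelian hence nilpotent, and $p$-divisible), we have $V=V^{\tk}+[\tk,V]$. Under the hypothesis $V^{\tk}\cap[\tk,V]=0$ this sum is direct, so $V=V^{\tk}\oplus[\tk,V]$. By the indecomposability theorem, $W:=[\tk,V]$ is a definable connected $\tk$-submodule, and it inherits the $p$-module structure because the $p$-morphism $\tk\to\mathrm{End}_{\mathrm{def}}(V)$ restricts to $W$. Moreover $W^{\tk}\subseteq V^{\tk}\cap[\tk,V]=0$. Fact \ref{Mas} therefore applies to $W$ and gives $W=V_1\oplus\dots\oplus V_r$ with each $V_i$ definable, connected and irreducible in the definable connected category. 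This yields the decomposition $V=V^{\tk}\oplus V_1\oplus\dots\oplus V_r$.

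For the fields, fix $i$. Since $V_i^{\tk}\subseteq W^{\tk}=0$ and $\tk$ is abelian, the image $\rho(\tk)$ of $\tk$ in $\mathrm{End}_{\mathrm{def}}(V_i)$ is a commuting family of definable endomorphisms. I would consider the ring $R_i$ generated by $\rho(\tk)$ together with the identity inside $\mathrm{End}_{\mathrm{def}}(V_i)$. By Schur's lemma in the definable connected category, every nonzero element of $C_{\mathrm{End}_{\mathrm{def}}(V_i)}(\tk)$ has finite kernel and is surjective, since its kernel and image are definable $\tk$-submodules and $V_i$ is irreducible. The plan is to show that $R_i$ (or its definable closure) is a definable commutative domain acting faithfully on $V_i$ modulo finite kernels, and then to invoke the standard result for modules of finite Morley rank (Zilber's field theorem, or its Lie-ring analogue in \cite{Zam2} or \cite{DT1}) to obtain that this ring generates a definable field $\K_i$, with $V_i$ a finite-dimensional $\K_i$-vector space. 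The map $\rho_i:\tk\to\K_i$ is then the composition of the action with this identification. It is additive and bracket-compatible, trivially so since $\tk$ is abelian and $\K_i$ is commutative. It is $p$-compatible because the action is a $p$-morphism, so $\rho_i(t^p)=\rho_i(t)^p$ in the associative algebra $\mathrm{End}(V_i)$, and this power agrees with the Frobenius in $\K_i$.

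I expect the main obstacle to be this field-construction step. The difficulty is that endomorphisms with finite kernel need not be invertible on the nose, and the ring generated by $\rho(\tk)$ need not be definable a priori. I would first try to use the $p$-divisibility of $\tk$: each $\rho(t)$ lies in a ring on which Frobenius is surjective, and the finite kernels should vanish because $V_i$ is an elementary abelian $p$-group and $\rho(t)$ acts semisimply, as $\rho(t)=\rho(s)^p$. This should allow the Zilber field theorem, applied to the definable connected abelian group $V_i$ with the abelian family $\rho(\tk)$ acting irreducibly, to produce $\K_i$ directly.
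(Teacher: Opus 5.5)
Your decomposition argument is the intended one. The paper states this corollary without proof, as the combination of Proposition~\ref{cohomologie} (giving $V=V^{\tk}+[\tk,V]$, direct by hypothesis) with Fact~\ref{Mas} applied to the definable connected submodule $[\tk,V]$, whose $\tk$-fixed points vanish. The paper gives no argument for the fields $\K_i$; they must come from a Lie-ring form of Zilber's field theorem applied to the abelian image of $\tk$ in $\mathrm{End}(V_i)$, as you propose. Your check that the resulting $\rho_i$ is a $p$-morphism is fine.

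What does not work is your last paragraph. Elements of a torus need not be semi-simple in the paper's sense: in $(\K,+,\phi)$ only the elements of $\overline{\mathbb{F}_p}$ are semi-simple. Writing $\rho(t)=\rho(s)^p$ says nothing about $\ker\rho(t)$, and even a genuinely semi-simple operator can have a kernel. $p$-divisibility plays no role at this step; it was used up in Proposition~\ref{cohomologie}.

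The finite kernels vanish because $\tk$ is connected. Let $0\neq r$ lie in the commutative ring $R_i$ generated by $\rho(\tk)$ and $\mathrm{Id}$. Then $\ker r$ is $\tk$-invariant and $(\ker r)^{\circ}$ is a proper definable connected submodule, so $\ker r$ is finite. The additive map $\tk\to\mathrm{End}(\ker r)$ has a definable kernel of finite index, which is all of $\tk$ by connectedness, so $\ker r\subseteq V_i^{\tk}=0$. Also $r(V_i)$ is a nonzero definable connected submodule, so $r$ is bijective.

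Definability is then handled as in Zilber's proof. For $0\neq v\in V_i$, the submodule generated by $\rho(\tk)v$ is generated by the connected subgroups $\rho(\tk)w$. By the indecomposability theorem and stabilisation of the ascending chain, it is definable and connected. It is nonzero since $v\notin V_i^{\tk}$, hence equals $V_i$, so $R_iv=V_i$. Thus $r\mapsto rv$ identifies $R_i$ with $V_i$ definably, using the bounded-length product form of the indecomposability theorem. If $s\in R_i$ satisfies $sv=r^{-1}v$, then $(rs-\mathrm{Id})v=0$ forces $rs=\mathrm{Id}$. Hence $R_i$ is a definable field $\K_i$, and $V_i$ is in fact one-dimensional over it.
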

The preceding result can be viewed as a weight spaces decomposition. We shall strengthen our notion of torus and we shall see that in this case we obtain a weight spaces decomposition for $p$-modules in an unconditional manner.
\begin{definition}
    \begin{enumerate} Let $\gk$ be a $p$-algebra of finite Morley rank.
        \item An element $x$ is semi-simple if there exists $n$ such that $x^{p^n}=x$.
        \item A torus $\tk$ is said to be $p$-good if every definable $p$-subalgebra is the definable closure (in the usual group-theoretic sense) of its semi-simple elements. We denote by $\tk_s$ the set of semi-simple elements of $\tk$.
    \end{enumerate}
\end{definition}
Note the parallelism with the notion of "good torus" in the context of groups of finite Morley rank. We now give a natural example of a $p$-good torus.
\\
Let $\K$ be an algebraically closed field of characteristic $p>0$. The structure $(\K,+,\phi)$, where $\phi$ is the Frobenius automorphism, is an abelian $p$-Lie algebra over $\mathbb{F}_p$. In this case, the semi-simple  elements correspond exactly to the fixed points under the iterates of the Frobenius automorphism, that is, to the elements of $\overline{\mathbb{F}_p}=\bigcup_{n\in\mathbb{N}}\mathbb{F}_{p^n}$. In this context, we obtain a particularly convenient algebraic characterisation of the $p$-subalgebras of $(\mathbb{F}_{p^n},+,\phi)$. In fact, the latter is a $\mathbb{F}_p[X]$-module for the action: $P(X)\cdot x=P(\phi(x))$. Thus, a $p$-subalgebra is precisely a $\mathbb{F}_p[X]$-submodule relative to this action.
\begin{lem}\label{module cyclique}
In a field of characteristic $p>0$, finitely many semi-simple elements generate a cyclic $\mathbb{F}_p$-module.
\end{lem}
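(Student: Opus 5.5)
Finitely many semi-simple elements $x_1,\dots,x_k$ of a field $\K$ of characteristic $p$ satisfy $x_j^{p^{n_j}}=x_j$, so they lie in $\overline{\mathbb{F}_p}$. Setting $n=\mathrm{lcm}(n_1,\dots,n_k)$, they all lie in the finite field $\mathbb{F}_{p^n}$. The plan is to show that the $p$-subalgebra they generate, namely the $\mathbb{F}_p[X]$-submodule of $\mathbb{F}_{p^n}$ for the action $P(X)\cdot x=P(\phi)(x)$ described above, is cyclic. It suffices to show that $\mathbb{F}_{p^n}$ itself is a cyclic $\mathbb{F}_p[X]$-module.

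The cleanest route is to go through the module structure over the principal ideal domain $\mathbb{F}_p[X]$. On $\mathbb{F}_{p^n}$ the Frobenius $\phi$ satisfies $\phi^n=\mathrm{id}$, so $\mathbb{F}_{p^n}$ is a finitely generated torsion $\mathbb{F}_p[X]$-module annihilated by $X^n-1$. By Dedekind--Artin independence of characters, the automorphisms $\mathrm{id},\phi,\dots,\phi^{n-1}$ of $\mathbb{F}_{p^n}^{\times}$ are $\mathbb{F}_{p^n}$-linearly independent, hence $\mathbb{F}_p$-linearly independent as additive maps. Therefore no nonzero polynomial of degree $<n$ annihilates the module, so the minimal polynomial of $\phi$ is exactly $X^n-1$. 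Since $\dim_{\mathbb{F}_p}\mathbb{F}_{p^n}=n=\deg(X^n-1)$, the minimal and characteristic polynomials of $\phi$ coincide, which is equivalent to $\phi$ being a cyclic endomorphism. Concretely, the structure theorem gives $\mathbb{F}_{p^n}\simeq\bigoplus_i\mathbb{F}_p[X]/(d_i)$ with $d_1\mid\dots\mid d_s$, where $d_s$ is the minimal polynomial and $\sum\deg d_i=n$; this forces $s=1$. (Equivalently, one can invoke the normal basis theorem: some $\alpha$ has $\alpha,\alpha^p,\dots,\alpha^{p^{n-1}}$ forming an $\mathbb{F}_p$-basis, so $\alpha$ generates.)

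Finally, any submodule of a cyclic module over a PID is cyclic, since $\mathbb{F}_p[X]/(X^n-1)$ has only principal submodules, namely the images of ideals of $\mathbb{F}_p[X]$. Hence the $p$-subalgebra $\langle x_1,\dots,x_k\rangle_p\subseteq\mathbb{F}_{p^n}$ is generated by a single element $y$, meaning it equals $\langle y,y^p,\dots\rangle$.

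The only real content is the cyclicity of $\mathbb{F}_{p^n}$ under Frobenius. I would handle it via the linear independence of the powers of $\phi$ (Artin) combined with the dimension count, which avoids quoting the normal basis theorem outright. The remaining steps are routine facts about modules over $\mathbb{F}_p[X]$.
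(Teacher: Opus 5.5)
Your proposal is correct and follows the paper's proof: you embed the elements in a finite field $\mathbb{F}_{p^n}$, show that $\mathbb{F}_{p^n}$ is a cyclic $\mathbb{F}_p[X]$-module under Frobenius, and conclude because submodules of a cyclic module over the PID $\mathbb{F}_p[X]$ are cyclic. The only difference is that the paper cites the normal basis theorem for the cyclicity step, while you prove it via Artin's independence of characters and the equality of the minimal and characteristic polynomials of $\phi$, which is the standard proof of that theorem for finite fields.
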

\begin{proof}
Let $t_1\dots,t_n$ be semi-simple elements. We may assume that $t_1,\dots,t_n$ belong to $\mathbb{F}_{p^k}$. Consider the $\mathbb{F}_p[X]$-module they generate. Since the extension $\mathbb{F}_{p^k}/\mathbb{F}_p$ is Galois, by the normal basis theorem, there exists a cyclic basis $\{\alpha,\phi(\alpha)\dots,\phi^{k-1}(\alpha)\}$ for the $\mathbb{F}_p$-vector space $\mathbb{F}_{p^k}$. Thus, $\mathbb{F}_{p^k}$ is a cyclic $\mathbb{F}_p[X]$-module. Therefore every submodule is also cyclic (since $\mathbb{F}_p[X]$ is a principal ideal ring); in other words, there exists $t_0$ such that $\langle t_0 \rangle_p=\langle t_1,\dots,t_n\rangle_p$.
\end{proof}
\begin{proposition}
Let $\K$ be a field of finite Morley rank of positive characteristic $p>0$. Then $\langle \K,+,\phi\rangle$, where $\phi$ is the Frobenius automorphism, is a $p$-good torus.
\end{proposition}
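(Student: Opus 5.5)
To show that $\langle \K,+,\phi\rangle$ is a $p$-good torus, two things are needed. First, it must be a torus. Second, every definable $p$-subalgebra $\bk\subseteq\K$ must be the definable closure of $\bk\cap\K_s$, where $\K_s=\overline{\mathbb{F}_p}\cap\K$.

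The first point is immediate. A field of finite Morley rank is connected, since it is infinite and its additive group has no definable subgroup of finite index other than itself. It is also perfect, so $\phi$ is surjective. Hence $\K^p=\K$ and $\K$ is a torus. The same argument applies to a connected definable $p$-subalgebra: $\phi$ is injective on $\K$, so by the earlier lemma it is surjective on the subalgebra, which is therefore a torus.

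For the second point, let $\bk$ be a definable $p$-subalgebra and set $\bk_s=\bk\cap\K_s$. Let $d(\bk_s)$ be its definable closure; this is a definable additive subgroup of $\bk$, and it is $\phi$-stable by the earlier lemma on $d$ of abelian $p$-subalgebras. We must show $d(\bk_s)=\bk$. I would first reduce to the case where $\bk$ is connected. A finite $p$-subalgebra is a finite $\phi$-stable set, so every element lies in a finite $\phi$-orbit and is semi-simple; thus finite $p$-subalgebras consist of semi-simple elements. For general $\bk$, write $\bk=\bk^\circ+F$ where $\bk^\circ$ is a $p$-subalgebra by Lemma \ref{p-ideal connexe}. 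A coset representative of $\bk/\bk^\circ$ can be chosen semi-simple: $\phi$ acts injectively on the finite quotient $\bk/\bk^\circ$, and $\bk^\circ$ is $p$-divisible, so one can lift fixed points of iterates of $\phi$.

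The main step is the connected case, and there I would use the structure of definable additive subgroups of $\K$. Let $k_0=\{\lambda\in\K:\lambda\bk\subseteq\bk\}$. This is a definable subfield, and it is infinite: by Zilber's field theorem the stabilizer of a connected definable additive subgroup acting minimally gives a definable field. More concretely, I would argue that $\bk$ is a $k_0$-vector space of finite dimension with $k_0$ infinite. Since $k_0$ is an infinite field of finite Morley rank, it is algebraically closed, so it contains $\overline{\mathbb{F}_p}$. Then $\bk$ contains $k_0\cdot b$ for some nonzero $b\in\bk_s$, and $k_0\cap\overline{\mathbb{F}_p}$ is infinite. The definable closure of an infinite subset of a minimal-rank field is that field, so $k_0 b\subseteq d(\bk_s)$. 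Applying this across a basis of $\bk$ gives $d(\bk_s)=\bk$.

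The hardest step is guaranteeing that $\bk_s\neq 0$ and that $k_0$ is infinite, that is, that a connected definable $\phi$-stable additive subgroup is a $k_0$-subspace over an infinite definable subfield. To get this I would try the following. Because $\bk$ is $\phi$-stable, $\bk$ is stable under multiplication by $\lambda^p-\lambda$-type operations; more precisely, $\bk$ is an $\mathbb{F}_p[\phi]$-module. Using Lemma \ref{module cyclique}, the semi-simple elements of $\bk$ form cyclic modules. I would then show via DCC that $\bk\cap\mathbb{F}_{p^n}$ grows unboundedly, since $\bk$ is infinite and connected while the fixed sets $\mathrm{Fix}(\phi^n)\cap\bk$ exhaust a dense part. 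Finally I would invoke Zilber's indecomposability theorem to conclude that the definable closure of this increasing union is all of $\bk$, because each definable proper subgroup has smaller rank.
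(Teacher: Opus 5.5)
\textbf{There is a genuine gap, and it is in the connected case, which is the whole content of the proposition.}

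You assert that $k_0=\{\lambda\in\K:\lambda\bk\subseteq\bk\}$ is infinite ``by Zilber's field theorem''. That theorem only produces a field from an infinite definable abelian group already known to act faithfully and minimally on $\bk$. Here the only candidate is $k_0^{\times}$ itself, so the argument is circular; the powers of $\phi$ are neither scalars nor a definable family.

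The claim is also far stronger than the proposition. An infinite definable subring of $\K$ is a definable subfield, hence algebraically closed, and $\K$ has finite degree over it by rank. So $k_0$ infinite would give $k_0=\K$ and $\bk\in\{0,\K\}$. Your route therefore tacitly needs that $\K$ has no proper nonzero connected definable Frobenius-stable additive subgroup, and nothing in the sketch addresses that.

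The fallback paragraph does not close the gap either:
\begin{enumerate}
\item $\phi$-stability gives closure under $\mathbb{F}_p[\phi]$, not under any scalars outside $\mathbb{F}_p$.
\item Nothing forces $\bk\cap\mathbb{F}_{p^n}\neq 0$. The map $\phi^n-\mathrm{id}$ is a surjective definable endomorphism of $\bk$ whose kernel $\bk\cap\mathbb{F}_{p^n}$ could, as far as rank considerations go, be trivial for every $n$.
\item Even infinitely many semisimple elements only give some infinite definable subgroup $d(\bk_s)\leq\bk$. Nothing, indecomposability included, forces it to have full rank.
\end{enumerate}
Likewise, ``the definable closure of an infinite subset of a field is the field'' is false in general. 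It holds for $\overline{\mathbb{F}_p}\,b$ only because that set is invariant under multiplication by $\overline{\mathbb{F}_p}^{\times}$.

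\textbf{The missing idea is model-theoretic and avoids stabilizers altogether.} Expand $\K$ by predicates for $\bk$ and $\tk_0=d(\bk_s)$. The result still has finite Morley rank, and $\phi$ is an automorphism of it:
\begin{enumerate}
\item $\phi(\bk)$ is a definable subgroup of $\bk$ of the same rank and degree, so $\phi(\bk)=\bk$.
\item $\phi$ permutes $\bk_s$, so $\phi(\tk_0)=\tk_0$.
\end{enumerate}
The paper then transfers from $\overline{\mathbb{F}_p}$, where every element is semisimple, via Wagner's theorem \cite[Proposition 7]{WagF}. The mechanism is that the prime model of such a structure is its $\mathrm{acl}(\emptyset)$. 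Each element of it has a finite $\phi$-orbit, so it lies in $\overline{\mathbb{F}_p}$. Hence $\overline{\mathbb{F}_p}$ with the induced predicates is an elementary substructure. There every point of $\bk$ is semisimple, so $\bk$ and $\tk_0$ have the same points, and elementarity gives $\bk=\tk_0$.

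This argument treats non-connected $\bk$ directly, so your reduction step is unnecessary. As written, that step also cites the wrong property: lifting a $\phi^n$-fixed coset needs $\phi^n-\mathrm{id}$ to be surjective on $\bk^\circ$. That follows from its kernel being finite, not from $p$-divisibility.
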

\begin{proof}
Let $\tk$ be a definable $p$-subalgebra and let $\tk_0=d(\tk_{s})$; note that $\tk_0$ is a $p$-algebra since $\tk_{s}$ is a $p$-algebra. Now, the Frobenius automorphism $\phi$ is an automorphism of the structure $\langle \K, \tk,\tk_0\rangle$. Since $\overline{\mathbb{F}_p}$ is a $p$-good torus (every element is semi-simple), the conclusion follows from Wagner's theorem \cite[Proposition 7]{WagF}.
\end{proof}

\begin{cor}(compare with \cite[Lemma 1.14]{CD})
Let $\tk$ be a torus that embeds definably as a $p$-algebra into $(\K,+)$, where $\K$ is a definable field of characteristic $p>0$. Let $V$ be a definable connected $p$-module for $\tk$. Then $V=V^{\tk}\oplus V_1\oplus \dots\oplus V_r$, where each $V_i$ is a definable connected irreducible $\tk$-module.
\end{cor}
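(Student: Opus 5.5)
The plan is to reduce to Corollary \ref{espace de poids}: it suffices to show that $V^{\tk}\cap[\tk,V]=0$. By Proposition \ref{cohomologie} we already have $V=V^{\tk}+[\tk,V]$, so this gives the direct sum $V=V^{\tk}\oplus[\tk,V]$. Next, $[\tk,V]$ is definable and connected by the indecomposability theorem, and it has no nonzero fixed points. Fact \ref{Mas} applied to $[\tk,V]$ then splits it into definable connected irreducible $\tk$-modules $V_1\oplus\dots\oplus V_r$.

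The key input is $p$-goodness. Since $\tk$ embeds definably as a $p$-algebra into $(\K,+,\phi)$, the Proposition above (via Wagner's theorem) shows that the image of $\tk$ is a definable $p$-subalgebra of a $p$-good torus. Hence $\tk=d(\tk_s)$. Because centralizers are definable, it follows that $V^{\tk}=V^{\tk_s}$, and more generally $C_V(\tk)=\bigcap_{t\in\tk_s}C_V(t)$.

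Now fix a semisimple $t$ with $t^{p^n}=t$, and put $\alpha=\ad_t$ acting on $V$, so that $\alpha^{p^n}=\alpha$. Then $\alpha$ satisfies the separable polynomial $X^{p^n}-X$ over $\mathbb{F}_p$, so $\alpha$ is semisimple in the linear-algebra sense. In particular $V=\ker\alpha\oplus\mathrm{im}\,\alpha$, where $\mathrm{im}\,\alpha=\mathrm{im}\,\alpha^{p^n-1}$ and $e=\alpha^{p^n-1}$ is an idempotent projection onto the image that kills the kernel. Both summands are definable. This handles one semisimple element at a time.

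To handle all of $\tk_s$ at once, I would use Lemma \ref{module cyclique}. Any finitely many semisimple elements lie in a cyclic $p$-submodule $\langle t_0\rangle_p$ of the image of $\tk$, and the $p$-morphism gives $\ad_{t_0^{p^i}}=\ad_{t_0}^{p^i}$. Hence every $\ad_{t_j}$ is a polynomial in $\ad_{t_0}$ without constant term. It follows that $\ker\ad_{t_0}\subseteq\bigcap_j\ker\ad_{t_j}$, and that $\mathrm{im}\,\ad_{t_j}\subseteq\mathrm{im}\,\ad_{t_0}$.

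Now take $v\in V^{\tk}\cap[\tk,V]$. By indecomposability, $[\tk,V]$ is a finite sum $\sum_k[x_k,V]$ with $x_k\in\tk$. Each $x_k$ must be expressed through semisimple elements, and I expect this to be the main obstacle. I would try the following. Since $C_V(\tk)$ equals a finite intersection $\bigcap_{j\le m}C_V(t_j)$ by DCC on definable subgroups, choose a single $t_0$ with $C_V(t_0)=V^{\tk}$. Then show that $[\tk,V]\subseteq\mathrm{im}\,\ad_{t_0}$. Note that $W=\mathrm{im}\,\ad_{t_0}$ is a $\tk$-submodule, because $\tk$ is abelian, and $V=V^{\tk}\oplus W$. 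On the quotient $V/W\cong V^{\tk}$, the torus $\tk$ acts as on a submodule of $V$, hence trivially. Therefore $[\tk,V]\subseteq W$, which gives $V^{\tk}\cap[\tk,V]\subseteq V^{\tk}\cap W=0$, and Corollary \ref{espace de poids} finishes the proof.
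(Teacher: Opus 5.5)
Your proof is correct, and its first half matches the paper's. Both reduce to showing $V^{\tk}\cap[\tk,V]=0$. Both use $p$-goodness to get $V^{\tk}=V^{\tk_s}$, then DCC to pass to finitely many semi-simple elements, and then Lemma \ref{module cyclique} to obtain one semi-simple $t_0$ with $V^{t_0}=V^{\tk}$.

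The two arguments differ in how they control $[\tk,V]$. The paper proves $[\tk,V]=[t_0,V]$ separately, in four steps:
\begin{enumerate}
\item It invokes $p$-goodness a second time, through the definable subgroup $\{t\in\tk:[t,V]\subseteq[\tk_s,V]\}$, to get $[\tk,V]=[\tk_s,V]$.
\item It applies the ascending chain condition on definable connected subgroups along $\tk_i=\tk\cap\mathbb{F}_{p^i}$ to find a semi-simple $t_1$ with $[t_1,V]=[\tk,V]$.
\item It enlarges $n$ so that $t_0\in\langle t_1\rangle_p$.
\item Only then does it use $\ad_{t_0}^{p^n}=\ad_{t_0}$ to see that $\ker\ad_{t_0}\cap\mathrm{im}\,\ad_{t_0}=0$.
\end{enumerate}

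You instead get the full splitting $V=\ker\ad_{t_0}\oplus\mathrm{im}\,\ad_{t_0}$ from the idempotent $\ad_{t_0}^{p^n-1}$. Since $\tk$ is abelian, $\mathrm{im}\,\ad_{t_0}$ is a $\tk$-submodule, and $\ker\ad_{t_0}=V^{\tk}$; together these give $[\tk,V]\subseteq\mathrm{im}\,\ad_{t_0}$ at once.

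Your route is shorter and uses $p$-goodness only for fixed points. It avoids the ACC argument and the finite-field filtration entirely. It also already yields $V=V^{\tk}\oplus[\tk,V]$ with $[\tk,V]=[t_0,V]$, so your appeal to Proposition \ref{cohomologie} is harmless but redundant. Likewise, the worry in your penultimate paragraph about writing the $x_k$ in terms of semi-simple elements is moot: your final argument never needs it, and you can delete that detour.
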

\begin{proof}
By Corollary \ref{espace de poids}, it suffices to show that $[\tk,V]\cap V^{\tk}=0$. Now, $V^{\tk}=V^{d(\tk_{s})}=V^{\langle t_1,\dots t_k\rangle_p}$ for finitely many semi-simple elements $t_1, \dots t_k\in \tk$ by virtue of the DCC on definable subgroups. By Lemma \ref{module cyclique}, there exists a semi-simple generator $t_0$ for $\langle t_1,\dots t_k\rangle_p$ and therefore $V^{\tk}=V^{t_0}$. 
\\
Similarly to \cite[Lemma 1.14]{CD}, we also obtain that $[\tk,V]=[t_0,V]$. Note that $[\tk_s,V]$ is a definable connected group and so $\Sigma=\{t\in \tk:[t,V]\subseteq[\tk_s,V]\}$ is a definable subgroup containing $\tk_s$. Since $d(\tk_s)=\tk$, $\Sigma=\tk$ and thus $[\tk,V]=[\tk_s,V]$. Note that $\bigcup_{i\in\mathbb{N}}(\tk\cap\mathbb{F}_{p^i})=\bigcup_{i\in \mathbb{N}}\tk_i=\tk_s$. By the ascending chain condition on definable connected subgroups, the family $\{[\tk_{i},V]:{i\in\mathbb{N}}\}$ admits a maximal element $[\tk_n,V]$. But for any $m\in \mathbb{N}$, $[\tk_m,V],[\tk_n,V]\subseteq [\tk_{lcm(m,n)},V]$ and so $[\tk_m,V]\subseteq [\tk_n,V]$ by maximality. Thus $[\tk_n,V]=[\tk_s,V]=[\tk,V]$. Now, take a semi-simple generator $t_1$ for the $p$-algebra $\tk_n$; then $[t_1,V]=[\tk,V]$ since $V$ is a $p$-module for $\tk$. By taking $n$ sufficiently large, we have $t_0\in \langle t_1 \rangle_p$ and so we may suppose that $t_0=t_1$.
\\
We therefore reduce to studying $[t_0,V]\cap V^{t_0}$. We may assume that $t_0^{p^n}=t_0$. But $\ad_{t_0}^2(x)=0$ implies $[t_0^{p^{n}},x]=[t_0,x]=0$ for all $x\in V$ and we are done.
\end{proof}

\subsection{The nilpotent case}
In general, one cannot decompose a nilpotent $p$-algebra of finite Morley rank as a central product of a torus and a $p$-nilpotent $p$-algebra. Nevertheless, we have the following result:
\begin{proposition}
    Let $\nk$ be a connected nilpotent $p$-Lie algebra of finite Morley rank. Then every torus is central.
\end{proposition}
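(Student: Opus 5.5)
Let $\tk\subseteq\nk$ be a torus; we show $[\tk,\nk]=0$. We regard $\nk$ itself as a definable connected $p$-module for $\tk$ via the adjoint action $t\mapsto \ad_t$. This is a $p$-morphism into $\mathrm{End}_{\mathrm{def}}(\nk)$ because $\ad_{t^p}=\ad_t^p$ and $\ad$ is linear. The torus $\tk$ is connected, $p$-divisible and abelian, hence nilpotent, so Proposition \ref{cohomologie} applies and gives
\[\nk=C_{\nk}(\tk)+[\tk,\nk].\]

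The plan is to show that this decomposition forces $[\tk,\nk]=0$, using the nilpotency of $\nk$. Set $V=[\tk,\nk]$; by the indecomposability theorem it is definable and connected, and it is $\tk$-stable. Bracketing the decomposition with $\tk$ gives
\[V=[\tk,\nk]=[\tk,C_{\nk}(\tk)]+[\tk,[\tk,\nk]]=[\tk,V],\]
since $[\tk,C_{\nk}(\tk)]=0$. Iterating, $V=[\tk,[\tk,\dots,[\tk,V]]]$ for any number $k$ of brackets. Each such iterated bracket lies in the $k$-th term $\nk^{k}$ of the lower central series, because $\tk\subseteq\nk$. Since $\nk$ is nilpotent, $\nk^{k}=0$ for $k$ large, and so $V=0$. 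Thus $\tk\subseteq Z(\nk)$.

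The one point needing care is the iteration step $V=[\tk,V]$. Here I write $[\tk,X]$ for the subgroup generated by the brackets $[t,x]$, and check that bracketing a sum decomposition with $\tk$ behaves additively. This holds because $[t,a+b]=[t,a]+[t,b]$, so $[\tk,A+B]=[\tk,A]+[\tk,B]$ as generated subgroups. I also need $\nk$ connected so that Proposition \ref{cohomologie} applies with $V=\nk$, and this is given by hypothesis. No bound on the nilpotency class relative to $p$ is needed, since the $p$-map is only used on $\tk$, where it is additive. I expect the main (mild) obstacle to be confirming that the hypotheses of Proposition \ref{cohomologie}, namely that $\tk$ is connected, $p$-divisible and nilpotent and that $\nk$ is a definable connected $p$-module, are met; all of these follow directly from the definitions above.
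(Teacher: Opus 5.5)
Your proof is correct, and it is a streamlined version of the paper's argument rather than the same one. The paper inducts on $MR(\nk)$. It applies the induction hypothesis to $\nk/Z(\nk)$ to get $\tk\subseteq Z_2(\nk)$, hence $[\nk,\tk]\subseteq Z(\nk)$. Only then does it use the decomposition $\nk=C_{\nk}(\tk)+[\nk,\tk]$ from Proposition \ref{cohomologie} to conclude. That final step is your identity $[\tk,\nk]=[\tk,[\tk,\nk]]$ applied at a single level. You observe that this identity holds outright, with no prior information on where $\tk$ sits. Iterating it down the lower central series then gives $[\tk,\nk]\subseteq \nk^{k}$ for every $k$. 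What this buys you is a proof with no induction and no quotients. You never need to check that $Z(\nk)$ is a $p$-ideal, so that $\nk/Z(\nk)$ inherits a $p$-structure. You also never need that the image of $\tk$ there is again a torus, or that $Z(\nk)$ is infinite so the rank actually drops. Finally, Proposition \ref{cohomologie} is invoked once, in $\nk$ itself, rather than implicitly in each successive quotient through the induction hypothesis. (Incidentally, the definability and connectedness of $V=[\tk,\nk]$ that you establish is not actually used in your descent.)
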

\begin{proof}
We proceed by induction on the rank of $\nk$. Let $\tk$ be a torus of $\nk$. We may assume that $\nk$ is not abelian and we apply the induction hypothesis to $\nk/Z(\nk)$. Thus, $\overline{\tk}\subseteq Z(\nk/Z(\nk))$; in other words, $\tk\subseteq Z_2(\nk)$. But by Proposition \ref{cohomologie}, $\nk=\nk^{\tk}+[\nk,\tk]=C_{\nk}(\tk)+[\nk,\tk]$; now $[\nk,\tk]\subseteq Z(\nk)$ and therefore $\tk\subseteq Z(\nk)$.
\end{proof}
In the case where the $p$-algebra is ``sufficiently'' nilpotent, we obtain stronger structure theorems.
\begin{lem}\label{tore du fitting}
  Let $\gk$ be a connected $p$-algebra of finite Morley rank. If a torus $\tk$ is contained in a nilpotent ideal of nilpotency class less than $p-1$, then $\tk$ is central in $\gk$.
\end{lem}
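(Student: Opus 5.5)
Let $\ik$ be a nilpotent ideal of $\gk$ of class less than $p-1$ with $\tk\subseteq\ik$. The plan is to show in two steps that $\gk=C_{\gk}(\tk)$: first reduce to a definable connected nilpotent $p$-ideal, then apply Proposition \ref{cohomologie} to the module $\gk$ and use that $\tk$ is central in that ideal.

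\emph{Step 1: reduction.} Replace $\ik$ by a definable connected nilpotent $p$-ideal containing $\tk$. The natural candidate is the definable closure of $\ik$, or the Fitting ideal if it exists; nilpotency class is preserved under definable closure in finite Morley rank. Then pass to $\ik_p$ using Fact 2.4 (the class is unchanged and $\ik_p$ is still an ideal), and finally to the connected component. By Lemma \ref{p-ideal connexe}, $\ik^{\circ}$ is again a $p$-ideal. Since $\tk$ is connected, $\tk\subseteq\ik^{\circ}$, and $\ik^{\circ}$ is normalised by $\gk$ because $\gk$ is connected. So we may assume $\ik$ is a definable connected nilpotent $p$-ideal of class less than $p-1$. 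By the preceding proposition, every torus of a connected nilpotent $p$-algebra is central, so $\tk\subseteq Z(\ik)$.

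\emph{Step 2: the module argument.} Regard $\gk$ as a definable connected module for $\tk$ via $\ad$. This is a $p$-module, since $\ad_{t^p}=\ad_t^p$ and $\ad$ is additive on the abelian $p$-algebra $\tk$. Proposition \ref{cohomologie} gives
\[\gk=C_{\gk}(\tk)+[\tk,\gk].\]
Because $\ik$ is an ideal, $[\tk,\gk]\subseteq\ik$, and $[\tk,\gk]$ is a definable connected $\tk$-submodule. Apply Proposition \ref{cohomologie} again to the module $W=[\tk,\gk]$: then $W=W^{\tk}+[\tk,W]$. Since $\tk\subseteq Z(\ik)$ and $W\subseteq\ik$, we get $[\tk,W]=0$. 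So $\tk$ acts trivially on $W$, that is, $\ad_t^2(\gk)=0$ for every $t\in\tk$. Hence $\ad_t^p=0$, so $[t^p,\gk]=0$. By $p$-divisibility, $\tk=\tk^p$, and therefore $[\tk,\gk]=0$, i.e. $\tk$ is central in $\gk$.

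The main obstacle is Step 1. One must make sure the ideal can be taken definable and connected without losing the ideal property or the class bound, because the preceding proposition requires a connected nilpotent $p$-algebra of finite Morley rank. If the definable closure of an ideal is awkward to control, a fallback is to avoid centrality in $\ik$ altogether: use that $\ad_t$ is nilpotent on $\gk$, since $\ad_t(\gk)\subseteq\ik$ and $\ik$ is nilpotent. Nilpotency of $\ad_t$ gives $\ad_t^{p^N}=0$ for large $N$, so $[t^{p^N},\gk]=0$. Writing $t=s^{p^N}$ with $s\in\tk$ then gives $[t,\gk]=0$ directly.
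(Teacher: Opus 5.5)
Your proposal is correct: the fallback is a complete proof, and the main route works after a small repair in Step 1 (see the second paragraph). The main route, however, is a detour compared with the paper's argument, which is purely algebraic. For $t\in\tk$ and $x\in\gk$ one has $t\in\ik$ and $[t,x]\in\ik$. Hence $\ad_t^p(x)=\ad_t^{p-1}([t,x])$ lies in the $p$-th term of the lower central series of $\ik$, which is $0$ because the class is less than $p-1$. So $[t^p,\gk]=0$, and $\tk=\tk^p$ gives $[\tk,\gk]=0$.

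That is exactly your fallback, with $N=1$ thanks to the class bound. Your version with general $N$ shows that the class bound is not even needed: take $p^N$ larger than the class of $\ik$, which is uniform in $t$, so that $\ad_s^{p^N}=0$ for the $s\in\tk$ with $s^{p^N}=t$. This argument requires no definability, connectedness or $p$-closure of $\ik$.

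Your main route instead goes through the preceding proposition on tori in connected nilpotent $p$-algebras, hence through Proposition \ref{cohomologie} and an induction on rank. This forces you to replace $\ik$ by a definable connected nilpotent $p$-ideal, and that is the only place where the class bound and the model theory get used; it gains nothing for this statement. Also, once $\tk\subseteq Z(\ik)$, you get $\ad_t^2(\gk)\subseteq[t,\ik]=0$ at once. So both applications of Proposition \ref{cohomologie} in Step 2 are superfluous, just as the decomposition $\gk=\gk^{\tk}+[\tk,\gk]$ quoted at the start of the paper's proof plays no real role.

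If you keep the main route, fix the order in Step 1. The $p$-closure of $d(\ik)$ has no reason to be definable, so you cannot take its connected component. Nor can you invoke Lemma \ref{p-ideal connexe}, which needs a $p$-algebra of finite Morley rank. Take $d(\ik)^{\circ}$ first. It is an ideal because $\ad_x(d(\ik)^{\circ})$ is a definable connected subgroup of $d(\ik)$, not because $\gk$ is connected. Then take its $p$-closure, which is a definable connected nilpotent $p$-ideal of the same class by the lemma immediately following Lemma \ref{p-ideal connexe}. Drop the Fitting ideal as a candidate, since its class need not be less than $p-1$.
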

\begin{proof}
    Since $\gk=\gk^{\tk}+[\tk,\gk]$, it suffices to verify that $[\tk,\gk]=0$. Now, $\ad_t^p(x)=\ad_t^{p-1}([t,x])=[t^p,x]=0$ for all $t\in\tk$ and $x\in \gk$; the conclusion follows from $p$-divisibility of $\tk$.
\end{proof}
\begin{proposition}\label{structure nilpotente}
    Let $\nk$ be a connected nilpotent $p$-algebra of nilpotency class less than $p-1$ of finite Morley rank. Then $\nk=\tk(+)\uk$, where $\tk$ is a central torus and $\uk$ is a $p$-nilpotent $p$-subgroup.
\end{proposition}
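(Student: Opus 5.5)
Since the nilpotency class of $\nk$ is less than $p-1$, Fact 2.4(1) makes the $p$-map $\phi$ additive on $\nk$, so $\phi:\nk\to\nk$ is a definable group endomorphism of $(\nk,+)$. The plan is to repeat the argument of Corollary \ref{dec abélien}(2) with $\phi$ in place of the $p$-map of an abelian algebra. Here $\nk$ is connected of finite Morley rank and the images $\mathrm{im}(\phi^n)$ form a descending chain of definable subgroups, so by DCC there is $N$ with $\mathrm{im}(\phi^N)=\mathrm{im}(\phi^{2N})$. Set $\tk=\mathrm{im}(\phi^N)$ and $\uk=\ker(\phi^N)$.

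First I will check the pieces. Since $\phi^N$ is additive, $\tk$ is a definable subgroup, and it is connected as the image of the connected group $\nk$. It is closed under $\phi$, and $\phi(\tk)=\mathrm{im}(\phi^{N+1})=\tk$, since the chain is constant from $N$ on. Hence $\tk$ is $p$-divisible.

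Abelianity of $\tk$ is less immediate. For $x,y\in\nk$ we have $[x^{p^N},y^{p^N}]=-\ad_x^{p^N-1}\ad_y^{p^N-1}([x,y])$ (the formula used in Fact 2.4), and this vanishes because $\ad_x^{p^N-1}$ is zero on $\nk$ once $p^N-1$ exceeds the nilpotency class. Likewise $[x^{p^N},z]=\ad_x^{p^N}(z)=0$ for every $z\in\nk$, so $\tk\subseteq Z(\nk)$ directly. This requires $N\ge1$, which we may assume. Thus $\tk$ is a central abelian connected $p$-divisible $p$-subalgebra, i.e.\ a central torus. Centrality would also follow from the Proposition that every torus of a connected nilpotent $p$-algebra is central.

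Next, $\uk=\ker(\phi^N)$ is a definable subgroup, closed under $\phi$ and under scalars, and every element of it is $p$-nilpotent of exponent $N$. I also need $\uk$ to be a subalgebra, i.e.\ $[u,v]^{p^N}=0$ for $u,v\in\uk$. I expect to get this from $[u,v]^{p}=\ad_{[u,v]}^{p}$-type identities, or more simply from $\ad_w^{p^N}=\ad_{w^{p^N}}$ together with nilpotency. Since the statement only asks for a ``$p$-nilpotent $p$-subgroup'', the subgroup structure suffices.

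Finally, the decomposition goes as in Corollary \ref{dec abélien}(2). Given $x\in\nk$, pick $y$ with $\phi^N(x)=\phi^{2N}(y)$. Then $x-\phi^N(y)\in\uk$, so $\nk=\uk+\tk$. Additivity of rank for the surjective homomorphism $\phi^N:\nk\to\tk$ gives $MR(\nk)=MR(\uk)+MR(\tk)$. Comparing with $MR(\uk+\tk)=MR(\uk)+MR(\tk)-MR(\uk\cap\tk)$ shows $\uk\cap\tk$ is finite, whence $\nk=\tk(+)\uk$.

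The main obstacle is justifying additivity of $\phi$ on all of $\nk$ via Fact 2.4(1), since the Jacobson terms $s_i(x,y)$ must vanish. This needs class $<p-1$ together with $s_i\in\hk^p$, and it is exactly where the hypothesis enters. Everything afterwards is formal.
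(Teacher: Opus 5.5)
Your proof is correct and is essentially the paper's own argument. You get additivity of $\phi$ from the class bound, you show $\tk=\mathrm{im}(\phi^N)$ is central because $[x^{p},z]=\ad_x^{p}(z)=0$, and you split $\nk=\mathrm{im}(\phi^N)(+)\ker(\phi^N)$ exactly as in Corollary~\ref{dec abélien}(2). One caution: your hope that $\uk=\ker(\phi^N)$ is a subalgebra is false in general. For example, take the Heisenberg algebra over an algebraically closed field with $[x,y]=z$ and $p$-map $ax+by+cz\mapsto c^{p}z$; then $\ker(\phi^N)$ contains $x$ and $y$ but not $z=[x,y]$. So you are right to claim only a $p$-nilpotent $p$-subgroup, as the paper does.
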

\begin{proof}
    Since $\nk$ is nilpotent of nilpotency class less than $p-1$, $\nk^p=\phi(\nk)$ is a $p$-subalgebra contained in $Z(\nk)$. If $\nk^p=0$, then $\nk$ is $p$-nilpotent; otherwise, since $\phi$ is additive, reasoning as in the proof of Corollary \ref{dec abélien}, we obtain a positive integer $i\geq 1$ such that $\nk=im(\phi^i)(+)\ker(\phi^i)$. Note that $im(\phi^i)$ is a central torus and $\ker(\phi^i)$ is a $p$-nilpotent $p$-group.
\end{proof}

\section{Soluble $p$-algebras}

\subsection{Structure theorems}
We study the distribution of tori in the case of a soluble $p$-algebra of finite Morley rank. We take inspiration from \cite{Win} and we generalize some results of this paper to our context.
\begin{proposition}\label{critère nilpotence}
    Let $\gk$ be a connected soluble $p$-algebra of finite Morley rank such that $p>MR(\gk)$. Suppose there is no non-trivial torus. Then $\gk$ is nilpotent.
\end{proposition}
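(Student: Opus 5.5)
I would argue by induction on $MR(\gk)$. The tool is a Cartan subalgebra: by Fact \ref{anneau résoluble}(2), $\gk$ has a Cartan subring $\ck$, and since $C_{\gk}(X)$ and $N_{\gk}(A)$ are $p$-subalgebras, I would expect $\ck$ to be a $p$-subalgebra. The aim is to show $\ck=\gk$.

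\textbf{Step 1: reduce to an abelian minimal ideal.} Take a definable connected minimal ideal $\ak$ of $\gk$; it is abelian because $\gk$ is soluble. I would like $\ak$ to be a $p$-ideal, arguing as in the Fact on minimal ideals. If it is not, I replace it by $\ak_p$, which is definable and connected by the lemma on $\nk_p$. I also use that $\ak^p\subseteq Z(\gk)$.

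\textbf{Step 2: apply induction to $\gk/\ak$.} First I must check that $\gk/\ak$ has no non-trivial torus. Suppose $\bar\tk$ is a torus of $\gk/\ak$, with preimage $\hk$; then $\phi$ is surjective on $\hk$ modulo $\ak$. Consider $d(x)$ for $x\in\hk$ and apply Corollary \ref{dec abélien}. Each $x\in\hk$ then splits, up to the $p$-nilpotent part, into a $p$-torus part, and the torus part must vanish. Hence each element of $\hk$ has some power $x^{p^n}\in\ak$. Since $\ak^p$ is central, this forces $\bar x^{p^{n+1}}$ to be trivial modulo a small set. So $\bar\tk$ is $p$-nilpotent and also $p$-divisible, which forces $\bar\tk=0$ by the rank argument. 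With this, induction gives that $\gk/\ak$ is nilpotent.

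\textbf{Step 3: show $\gk$ acts nilpotently on $\ak$.} This is the main obstacle. Suppose the action of $\gk$ on $\ak$ is not nilpotent. Then $\ak$ is an irreducible module for the nilpotent algebra $\gk/C_{\gk}(\ak)$ with $[\gk,\ak]=\ak$. Using the rank bound $p>MR(\gk)$, Fact \ref{anneau résoluble}(1) should give $\gk=E_{\gk}(\ck)\oplus$ (a Fitting-one part). Here $E_{\gk}(\ck)$ is def-abnormal, so it equals $\ck$ by minimality. The Fitting-one part contains $\ak$, because $\ak\cap\ck$ is a proper submodule and hence trivial.

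Now take $x\in\ck$ acting non-nilpotently on $\ak$. Let $d(x)=\uk(+)\tk$ as in Corollary \ref{dec abélien}. The $p$-nilpotent part $\uk$ acts nilpotently on $\ak$, since $\ad_u^{p^m}=\ad_{u^{p^m}}=0$. So the torus part $\tk$ acts non-trivially, and in particular $\tk\neq0$. To make $\tk$ connected I would use that $d(x)^\circ$ has finite index. If $\tk$ is finite, then it consists of semi-simple elements $t$ with $t^{p^n}=t$, and I would need to argue that such an element generates an infinite torus, perhaps via the Zilber field theorem on $\ak$. I expect this finite-versus-connected torus issue to be the hardest point. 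A plausible fix is to pass to $\tk^{\circ}$ and use $\ak^{p}\subseteq Z(\gk)$ to show that the finite part acts trivially.

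\textbf{Step 4: conclude.} Once both $\ak$ and $\gk/\ak$ are nilpotent for the $\gk$-action, $\gk$ is nilpotent, because an ascending central series lifts. Fact \ref{ccohomogie et sous-module} handles the definability of the connected submodules involved.
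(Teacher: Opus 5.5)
\textbf{The gap: finite $p$-tori.} Your proof fails at the point you flagged yourself, and the same flaw already appears in Step~2. There you say that the $p$-divisible summand of $d(x)=\uk(+)\tk$ ``must vanish''. But $d(x)$ is the definable hull of a single element and need not be connected. So $\tk$ is only a $p$-torus, and it may be a non-trivial finite set of semi-simple elements, such as $\langle t\rangle_p$ with $t^{p^n}=t$. The hypothesis ``no non-trivial torus'' only concerns connected $p$-divisible subalgebras, so it gives no information about such a $\tk$.

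In Step~3 the finite case is in fact the only case. If $\tk$ were infinite, then $\phi(\tk^{\circ})=\tk^{\circ}$ and $\tk^{\circ}$ would be a torus. So for $x\in\ck$ acting non-nilpotently on $\ak$, $\tk$ is a non-trivial finite $p$-torus, and you leave exactly this case open. Your suggested fix does not help. In this case $\tk^{\circ}=0$, and $\ak^{p}\subseteq Z(\gk)$ says nothing about how a semi-simple element of $\ck$ acts on $\ak$.

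There are also smaller problems:
\begin{itemize}
\item Replacing $\ak$ by $\ak_p$ in Step~1 loses the minimality that gives irreducibility in Step~3.
\item The fact from \cite{TZ} you cite gives definability and def-abnormality of $E_{\gk}(\ck)$, not a Fitting decomposition of $\gk$.
\item $\ak\cap\ck$ is a $\gk$-submodule only once you know $\gk=\ck+\ak$, which you need to justify.
\end{itemize}

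\textbf{The missing idea: use the connected Cartan subalgebra.} Work with $\ck$ itself rather than with $d(x)$. Since $p>MR(\gk)$, $\ck$ is nilpotent of class less than $p-1$. Hence $\phi|_{\ck}$ is additive, and $[x^p,y]=\ad_x^p(y)=0$ gives $\phi(\ck)\subseteq Z(\ck)$. By the DCC, for $N$ large $\phi^N(\ck)$ is a connected, abelian, $p$-divisible $p$-subalgebra, that is, a torus, so it is $0$. Therefore every element of $\ck$ is $p$-nilpotent, hence ad-nilpotent on all of $\gk$. Within your framework, this immediately rules out an $x\in\ck$ acting non-nilpotently on $\ak$.

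This is the paper's argument, and it makes the induction unnecessary. By \cite[Lemma 5.3]{TZ}, ad-nilpotent elements lie in $F(\gk)$, so by connectedness $\ck\subseteq F^{\circ}(\gk)$. Since $\ck$ is self-normalizing, the normalizer condition in the nilpotent $F^{\circ}(\gk)$ forces $\ck=F^{\circ}(\gk)$. This is an ideal, so $\gk=N_{\gk}(\ck)=\ck$ is nilpotent.
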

\begin{proof}
First, note that every connected nilpotent subring has nilpotency class less than $p-1$. Let $\ck$ be a Cartan subalgebra (which exists by Fact \ref{anneau résoluble}; since a Cartan subring is self-normalizing, it is in fact a $p$-algebra). The hypothesis on tori implies that $Z(\ck)^{\circ}$ is $p$-nilpotent. Now, $\ck^p\subseteq Z(\ck)^{\circ}$ : $[x^p,y]=\ad_x^p(y)=0$ for all $x,y\in \ck$, since $\ck$ is nilpotent of nilpotency class less than $p-1$. Thus, $\ck$ is $p$-nilpotent. By \cite[Lemma 5.3]{TZ}, $F(\gk)$ contains all ad-nilpotent elements and in particular all $p$-nilpotent elements. Consequently, $\ck\subseteq F(\gk)^{\circ}$ (since $\ck$ is connected); since $\ck$ is self-normalizing, the normalizer condition yields $F^{\circ}(\gk)=\ck=\gk$.
\end{proof}

\begin{lem}\label{maximalité}
    Let $\gk$ be a connected $p$-algebra of finite Morley rank and let $\ik$ be a definable connected $p$-ideal. Suppose that $\gk/\ik$ and $\ik$ are tori. Then $\gk$ is a torus.
\end{lem}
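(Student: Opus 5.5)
We need to show that $\gk$ is abelian and that $\phi$ is surjective on $\gk$. The plan is to prove abelianity first, using the torus $\ik$ and Proposition \ref{cohomologie}, and then deduce $p$-divisibility from that of $\gk/\ik$ and $\ik$.

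\emph{Step 1: $\ik$ is central.} Since $\ik$ is a definable connected $p$-ideal, $\gk$ (viewed via the adjoint action) is a definable connected $p$-module for $\ik$. Proposition \ref{cohomologie} applies to the torus $\ik$ and gives
\[\gk=C_{\gk}(\ik)+[\ik,\gk].\]
Because $\ik$ is an ideal, $[\ik,\gk]\subseteq\ik$. Because $\ik$ is abelian, $\ik\subseteq C_{\gk}(\ik)$. Hence $\gk=C_{\gk}(\ik)$, so $\ik\subseteq Z(\gk)$.

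An alternative route is the argument of Lemma \ref{tore du fitting}. For $t\in\ik$ and $x\in\gk$ we have $[t,x]\in\ik$, hence $\ad_t^2(x)=0$. Therefore $[t^p,x]=\ad_t^p(x)=0$, and $p$-divisibility of $\ik$ gives $[\ik,\gk]=0$.

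\emph{Step 2: $\gk$ is abelian.} Since $\gk/\ik$ is abelian, $[\gk,\gk]\subseteq\ik\subseteq Z(\gk)$, so $\gk$ is nilpotent of class at most $2<p-1$. This uses $p>3$.

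Fix $x\in\gk$. The map $y\mapsto[x,y]$ is then an additive map $\gk\to\ik$. Moreover,
\[[x^p,y]=\ad_x^p(y)=0,\]
because $\ad_x^2(y)\in[\gk,\ik]=0$. Next use that $\gk/\ik$ is a torus: write $x=z^p+i$ with $z\in\gk$ and $i\in\ik$. The $p$-map is additive here by nilpotency class $2<p-1$ (Fact 2.4(1)). Then
\[[x,y]=[z^p,y]+[i,y]=0.\]
So $\gk$ is abelian.

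\emph{Step 3: $\gk$ is $p$-divisible.} On the abelian algebra $\gk$ the map $\phi$ is a group endomorphism, and its image contains $\ik^p=\ik$. Since $\phi$ is also surjective modulo $\ik$, it is surjective on $\gk$. Together with connectedness of $\gk$ (given), this shows $\gk$ is a torus. Equivalently, one can count: $\ker\phi$ maps into $\ker(\phi_{\gk/\ik})$, which is finite, with fibres contained in $\ker\phi_{\ik}$, which is finite. So $\ker\phi$ is finite, and the earlier lemma characterising tori applies.

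The main obstacle is Step 2: showing commutativity rather than merely nilpotency. The key is the combination $[x^p,\gk]=0$ with $\gk=\gk^p+\ik$. This requires additivity of $\phi$, which comes from class $2$ and $p>3$, and the paper has assumed $p>3$ throughout.
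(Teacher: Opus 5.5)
Your proof is correct and follows essentially the paper's argument: $\ik$ is central (the paper cites Lemma \ref{tore du fitting}, whose computation is your alternative route in Step 1), so $\gk$ is $2$-nilpotent with $\gk^p$ central, and $p$-th roots are lifted through $\gk=\gk^p+\ik$ using the $p$-divisibility of $\ik$. The only difference is the order: you prove commutativity explicitly first, whereas the paper leaves it implicit in $\gk=\gk^p\subseteq Z(\gk)$. As a result, you need additivity of $\phi$ only on an abelian algebra, so your appeal to $p>3$ is in fact unnecessary.
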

\begin{proof}
   Note that $\ik$ is central by Lemma \ref{tore du fitting}. Thus, $\gk$ is 2-nilpotent. In particular, $\gk^p$ is a central $p$-ideal. Note that $\gk$ is $p$-divisible; indeed, for every $y\in \gk$, there exists $x\in \gk$ such that $y-x^p\in \ik$, since this ideal is $p$-divisible, and since $\gk$ is 2-nilpotent, we deduce that $\gk^p=\gk$.
\end{proof}
\begin{proposition}\label{centralisateur nilpotent}
    Let $\gk$ be a connected $p$-algebra of finite Morley rank such that $p\geq MR(\gk)$. Suppose that $C_{\gk}^{\circ}(\tk)$ is a proper soluble subalgebra, where $\tk$ is a maximal torus. Then $C_{\gk}^{\circ}(\tk)$ is a nilpotent $p$-algebra and is almost self-normalizing.
\end{proposition}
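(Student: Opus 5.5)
Set $\hk=C_{\gk}^{\circ}(\tk)$. By the earlier Fact, $C_{\gk}(\tk)$ is a $p$-subalgebra, and so is its connected component $\hk$. Moreover $\tk\subseteq Z(\hk)$, since $\tk$ is abelian and connected. The plan has two parts: first show that $\hk$ is nilpotent, then deduce from Corollary \ref{rigidity torus} that $\hk$ has finite index in its normaliser.

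\emph{Nilpotency.} The subalgebra $\hk$ is connected and soluble, and $MR(\hk)<MR(\gk)\leq p$ because $\hk$ is proper. Hence $p>MR(\hk)$, and Proposition \ref{critère nilpotence} together with Fact \ref{anneau résoluble} apply to $\hk$. We pass to $\bar\hk=\hk/\tk$; here $\tk$ is a central definable connected $p$-ideal of $\hk$, so $\bar\hk$ is a connected soluble $p$-algebra of smaller rank. We claim that $\bar\hk$ has no non-trivial torus. Suppose $\bar{\mathfrak s}$ were one, and let $\mathfrak s$ be the connected component of its preimage. Then $\tk\subseteq\mathfrak s$ is a definable connected $p$-ideal with $\mathfrak s/\tk$ a torus. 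If $\mathfrak s$ is abelian, Lemma \ref{maximalité} makes $\mathfrak s$ a torus strictly containing $\tk$, contradicting maximality. Abelianity of $\mathfrak s$ should hold: $\tk$ is central in $\mathfrak s$, so $\mathfrak s$ is $2$-nilpotent of class $<p-1$. The proposition on nilpotent $p$-algebras (every torus is central) and Proposition \ref{structure nilpotente} should then let us lift $\bar{\mathfrak s}$ to a torus of $\mathfrak s$. This torus is central in $\mathfrak s$, and together with $\tk$ it generates $\mathfrak s$ up to finite index; connectedness then gives that $\mathfrak s$ is abelian. Alternatively, one could check the proof of Lemma \ref{maximalité}, which only seems to need centrality of $\tk$. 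Once the claim holds, Proposition \ref{critère nilpotence} shows $\bar\hk$ is nilpotent. Since $\tk$ is central, $\hk$ is a central extension of a nilpotent algebra and is therefore nilpotent.

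\emph{Almost self-normalising.} Let $\nk=N_{\gk}(\hk)$. Since $\hk$ is nilpotent of class $<p-1$ (its rank is below $p$), Proposition \ref{structure nilpotente} gives $\hk=\tk'(+)\uk$, where $\tk'$ is the unique maximal central torus of $\hk$, namely $\mathrm{im}(\phi^i)$. This is characteristic, and $\tk\subseteq\tk'$, so maximality of $\tk$ yields $\tk^{\prime\circ}=\tk$. Hence $\tk$ is invariant under $\ad_x$ for every $x\in\nk$; more precisely $[x,\tk]\subseteq\hk$, and $[x,\tk]$ is also mapped into $\mathrm{im}(\phi^i)$ by the $p$-structure. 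Therefore $\nk\subseteq N_{\gk}(\tk)$. Corollary \ref{rigidity torus} then gives $\nk^\circ\subseteq N_{\gk}(\tk)^\circ=C_{\gk}(\tk)^\circ=\hk$, so $\hk$ has finite index in $N_{\gk}(\hk)$.

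\emph{Main obstacle.} The hardest step is the one just used: proving that $\tk$ is normalised by $N_{\gk}(\hk)$. The $p$-map is not additive on $\gk$, and $\ad_x$ does not obviously commute with $\phi$. If the characteristic-subalgebra argument fails, the fallback is to work with $[\nk,\tk]\subseteq\hk$ directly. We would then use the weight decomposition of $\hk$ as a $\tk$-module: $\tk$ acts trivially on $\hk$, so by Proposition \ref{cohomologie} $[\nk,\tk]$ lands in a piece that $\tk$ centralises. The identity $[t^p,x]=\ad_t^p(x)$ combined with $p$-divisibility should then force $[t,x]\in\tk$ up to a finite error, which suffices after taking connected components.
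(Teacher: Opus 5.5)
Your nilpotency argument is the paper's own: quotient by $\tk$, show there is no torus there via Lemma \ref{maximalité}, apply Proposition \ref{critère nilpotence}, then use that a central extension of a nilpotent algebra is nilpotent. You can drop the detour about abelianity of $\mathfrak s$. Lemma \ref{maximalité} has no abelianity hypothesis and applies directly to $\mathfrak s$ with $\ik=\tk$. Its conclusion that $\mathfrak s$ is a torus already contains abelianity, since its proof gives $\mathfrak s=\mathfrak s^p\subseteq Z(\mathfrak s)$.

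The genuine gap is the step you flagged yourself: showing that $N_{\gk}(\hk)$ normalises $\tk$.
\begin{itemize}
\item Knowing that $\tk=\mathrm{im}(\phi^i)$ is canonically defined in $\hk$ gives invariance under automorphisms of $\hk$. But $x\in N_{\gk}(\hk)$ acts through the derivation $\ad_x$, and nothing you wrote shows that such a derivation preserves $\mathrm{im}(\phi^i)$.
\item The remark that $[x,\tk]$ is mapped into $\mathrm{im}(\phi^i)$ by the $p$-structure is not an argument.
\item The fallback via weight decompositions only aims at $[t,x]\in\tk$ up to a finite error, and even that is left unproved.
\end{itemize}

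The missing idea is a one-line computation of the kind used in Lemma \ref{tore du fitting}. Take $x\in N_{\gk}(\hk)$ and $t\in\tk$. Then $[t,x]\in\hk\subseteq C_{\gk}(\tk)$, so $\ad_t^2(x)=0$, and hence $[t^p,x]=\ad_t^p(x)=0$. Since $\tk=\tk^p$, this gives $[\tk,x]=0$. Thus $N_{\gk}(\hk)\subseteq C_{\gk}(\tk)$, and $\hk=C_{\gk}(\tk)^{\circ}$ has finite index there.

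This argument uses neither the nilpotency of $\hk$ nor the uniqueness of its maximal torus. The paper dispatches this step by citing Corollary \ref{rigidity torus}, and the same computation is exactly what is needed to pass from $N_{\gk}(\hk)$ to $N_{\gk}(\tk)$ before that corollary can be applied.
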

\begin{proof}
    By Corollary \ref{rigidity torus}, it suffices to verify nilpotency. Consider $\hk=C_{\gk}(\tk)^{\circ}/\tk$. Now, $\hk$ is soluble and $p>MR(\hk)$. If $\hk$ contains a non-trivial torus $\overline{\tk_1}$, then $\tk_1$ is a torus of $C_{\gk}(\tk)^{\circ}$ strictly containing $\tk$ by Lemma \ref{maximalité}, a contradiction. By Lemma \ref{critère nilpotence}, $\hk$ is nilpotent and the same holds for $C_{\gk}^{\circ}(\tk)$.
\end{proof}
We shall characterize the structure of soluble $p$-algebras of finite Morley rank in terms of the centralizers of maximal tori; we begin with two lemmas.
\begin{lem}\label{centralisateur généralisé torus}
Let $\gk$ be a connected soluble $p$-algebra of finite Morley rank such that $p>MR(\gk)$, and let $\tk$ be a torus. Then $E_{\gk}(\tk)=C_{\gk}(\tk)$. In particular, $C_{\gk}(\tk)$ is connected.
\end{lem}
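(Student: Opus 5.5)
The inclusion $C_{\gk}(\tk)\subseteq E_{\gk}(\tk)$ is immediate from the definitions, so the plan is to prove the reverse inclusion. Connectedness then comes for free: by Fact \ref{anneau résoluble} (1), $E_{\gk}(\hk)$ is definable and connected for any definable connected nilpotent subring $\hk$, and a torus is connected and abelian. So once $E_{\gk}(\tk)=C_{\gk}(\tk)$ holds, $C_{\gk}(\tk)$ is connected.

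Set $\mathfrak{e}=E_{\gk}(\tk)$. By Fact \ref{anneau résoluble} (1) it is a definable connected subalgebra, and there is $k$ with $\ad_t^k(\mathfrak{e})=0$ for all $t\in\tk$. We have $\tk\subseteq\mathfrak{e}$, and $\mathfrak{e}$ is stable under $\ad_t$ for every $t\in\tk$ (it is an intersection of generalized eigenspaces of $\ad_t$). Thus $\mathfrak{e}$ is a definable connected $p$-module for $\tk$ via the adjoint action; this is a $p$-morphism since $\ad_{t^p}=\ad_t^p$. Proposition \ref{cohomologie} then gives
\[\mathfrak{e}=\mathfrak{e}^{\tk}+[\tk,\mathfrak{e}]=C_{\mathfrak{e}}(\tk)+[\tk,\mathfrak{e}].\]
It therefore suffices to show $[\tk,\mathfrak{e}]=0$, or equivalently that $\tk$ acts trivially on $\mathfrak{e}$.

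The key step is the following. For $t\in\tk$, $\ad_t$ is nilpotent on $\mathfrak{e}$ of index at most $k$. We need $k\le p$ so that $\ad_t^p=0$ on $\mathfrak{e}$. Then $[t^p,x]=\ad_t^p(x)=0$ for all $x\in\mathfrak{e}$, and $p$-divisibility $\tk=\tk^p$ yields $[\tk,\mathfrak{e}]=0$, exactly as in the proof of Lemma \ref{tore du fitting}.

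The main obstacle is bounding the nilpotency index by $p$. I would do this with the rank hypothesis. Consider the chain $\ker(\ad_t|_{\mathfrak{e}})\subseteq\ker(\ad_t^2|_{\mathfrak{e}})\subseteq\cdots$ of definable subgroups of $\mathfrak{e}$. Each step that increases must increase the rank, or else it gives only a finite index extension. To handle the finite index case, I would pass to connected components using the maps $\ad_t^i$, which send definable connected groups to definable connected groups. The aim is to see that each strict increase in the chain $\ad_t^i(\mathfrak{e})$ is a strict decrease of connected groups, hence a rank drop. So the chain $\mathfrak{e}\supsetneq\ad_t(\mathfrak{e})\supsetneq\cdots$ of definable connected subgroups strictly decreases in rank until it reaches $0$. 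Its length is therefore at most $MR(\mathfrak{e})\le MR(\gk)<p$, which gives $\ad_t^{p}(\mathfrak{e})=0$ (indeed already $\ad_t^{p-1}(\mathfrak{e})=0$).

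If this approach runs into difficulty, the fallback is to cite the argument of \cite{TZ} that the constant $k$ in Fact \ref{anneau résoluble} can be taken to be at most $MR(\gk)$. With either route the argument is complete.
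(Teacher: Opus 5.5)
Your proof is correct, but you bound the nilpotency index of $\ad_t$ on $\mathfrak{e}=E_{\gk}(\tk)$ by a different route from the paper.

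\textbf{The paper's route.} The paper places $\tk$ inside $F^{\circ}(E_{\gk}(\tk))$. This uses the uniform ad-nilpotence from the cited Fact, together with the result of \cite{TZ} that the Fitting ideal contains all ad-nilpotent elements. Each $t\in\tk$ then maps $\mathfrak{e}$ into a connected nilpotent ideal whose class is controlled by $MR(\gk)<p$. Hence $\ad_t^p(\mathfrak{e})=0$, and $p$-divisibility finishes.

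\textbf{Your route.} You use the chain $\mathfrak{e}\supseteq\ad_t(\mathfrak{e})\supseteq\ad_t^2(\mathfrak{e})\supseteq\cdots$. Each term is the image of the connected group $\mathfrak{e}$ under a definable endomorphism, so it is definable and connected. An equality at one step makes the chain constant from then on. The uniform bound $k$ forces the chain to reach $0$, so every step before $0$ is a strict inclusion of connected groups and drops the rank. This gives $\ad_t^{MR(\mathfrak{e})}(\mathfrak{e})=0$ with $MR(\mathfrak{e})\le MR(\gk)<p$.

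\textbf{Comparison.} Your argument is more elementary and self-contained. It avoids the Fitting ideal and the extra lemma from \cite{TZ} that the paper's one-line proof uses implicitly. It also proves a general fact: a nilpotent definable endomorphism of a connected group of finite Morley rank has nilpotency index at most the rank. The paper's version is shorter once that machinery is available, and it fits the Fitting-ideal viewpoint of Lemma~\ref{tore du fitting}.

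\textbf{Minor remarks.}
\begin{enumerate}
\item The detour through the kernel chain is unnecessary; only the image chain does the work. You should state the stabilization step explicitly: if $\ad_t^{i+1}(\mathfrak{e})=\ad_t^{i}(\mathfrak{e})$, then the chain is constant afterwards.
\item The appeal to Proposition~\ref{cohomologie} is also superfluous. Once $\ad_t^p(\mathfrak{e})=0$ for all $t\in\tk$, you get $[t^p,\mathfrak{e}]=0$, and $\tk=\tk^p$ gives $[\tk,\mathfrak{e}]=0$ directly.
\end{enumerate}
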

\begin{proof}
Since, by Fact \ref{anneau résoluble}, $\tk\subseteq F^{\circ}(E_{\gk}(\tk))$, any element of $\tk$ acts as a nilpotent operator of nilpotency class less than $p-1$ on $E_{\gk}(\tk)$. In particular, $[t^p,x]=0$ for all $t\in\tk$ and all $x\in E_{\gk}(\tk)$; the conclusion follows from $p$-divisibility.
\end{proof}
\begin{lem}\label{centralisateur généralisé}
 Let $\gk$ be a connected soluble $p$-algebra of finite Morley rank such that $p>MR(\gk)$. Let $\hk$ be a definable connected nilpotent $p$-subalgebra containing a non-trivial torus $\tk$ that is maximal (in $\hk$). Then $E_{\gk}(\hk)=E_{\gk}(\tk)$.
\end{lem}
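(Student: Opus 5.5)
The plan is to prove the two inclusions separately. One of them is immediate; the other comes from the nilpotent structure theorem applied to $\hk$.

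\emph{Easy inclusion.} Since $\tk\subseteq\hk$, we have $E_{\gk}(\hk)=\bigcap_{x\in\hk}E_{\gk}(x)\subseteq\bigcap_{x\in\tk}E_{\gk}(x)=E_{\gk}(\tk)$. By Lemma \ref{centralisateur généralisé torus}, $E_{\gk}(\tk)=C_{\gk}(\tk)$. So it remains to show that every $x\in\hk$ acts nilpotently on $C_{\gk}(\tk)$.

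\emph{Structure of $\hk$.} Since $p>MR(\gk)\geq MR(\hk)$, the connected nilpotent algebra $\hk$ has nilpotency class less than $p-1$, as at the start of the proof of Proposition \ref{critère nilpotence}. Hence the $p$-map is additive on $\hk$ and $\hk^p\subseteq Z(\hk)$. Following the proof of Proposition \ref{structure nilpotente}, I choose $i\geq 1$ with $\mathrm{im}(\phi^i)=\mathrm{im}(\phi^{2i})$ on $\hk$. Then $\tk':=\phi^i(\hk)$ is the image of a connected group under a definable additive map, so it is definable and connected. It is also central and $p$-divisible, hence a torus. Moreover $\hk=\tk'(+)\ker(\phi^i)$. (If $\tk'=0$, then $\hk$ is $p$-nilpotent; but $\tk$ is a non-trivial $p$-divisible subalgebra of $\hk$, so this case cannot occur.)

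\emph{Identifying $\tk'$ with $\tk$.} This is the step I expect to be the main obstacle.
\begin{itemize}
\item First, $\tk=\tk^{p^i}\subseteq\phi^i(\hk)=\tk'$, using the $p$-divisibility of $\tk$.
\item Since $\tk'$ is a torus of $\hk$ containing $\tk$, maximality of $\tk$ in $\hk$ gives $\tk'=\tk$. Here one should check that $\tk'$ really is a torus in the sense of the paper (abelian, connected, $p$-divisible). It is abelian because it lies in $Z(\hk)$, and it is connected and $p$-divisible by the previous paragraph.
\end{itemize}
Consequently $x^{p^i}\in\tk$ for every $x\in\hk$.

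\emph{Conclusion.} Let $y\in C_{\gk}(\tk)$ and $x\in\hk$. By the first axiom of $p$-algebras, iterated,
\[\ad_x^{p^i}(y)=[x^{p^i},y]=0,\]
since $x^{p^i}\in\tk$. Hence $y\in E_{\gk}(x)$ for all $x\in\hk$. This gives $C_{\gk}(\tk)\subseteq E_{\gk}(\hk)$, and therefore $E_{\gk}(\hk)=E_{\gk}(\tk)=C_{\gk}(\tk)$.
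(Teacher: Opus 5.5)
Your proof is correct and follows the paper's argument: both use Proposition \ref{structure nilpotente} to get $\hk^{p^n}=\tk$, then pass nilpotency from $x^{p^n}$ to $x$ via $\ad_{x^{p^n}}=\ad_x^{p^n}$. The differences are cosmetic. You justify $\phi^i(\hk)=\tk$ explicitly from the maximality of $\tk$ in $\hk$, which the paper leaves implicit. You also use $E_{\gk}(\tk)=C_{\gk}(\tk)$, so $x^{p^i}$ kills $y$ outright, whereas the paper only needs that $x^{p^n}$ acts nilpotently on $E_{\gk}(\tk)$.
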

 \begin{proof}
By Proposition \ref{structure nilpotente}, there exists an integer $n$ such that $\hk^{p^n}=\tk$. Thus, let $x\in \hk$; then $x^{p^n}\in \tk$. Consequently, $x^{p^n}$ acts as a nilpotent operator (of nilpotency class $k$, say) on $E_{\gk}(\tk)$ and the same holds for $x$: $\ad^k_{x^{p^n}}(y)=\ad_x^{kp^n}(y)=0$ for all $y\in E_{\gk}(\tk)$.
 \end{proof}

\begin{theo}
 Let $\gk$ be a connected soluble non-nilpotent $p$-algebra of finite Morley rank, such that $p>MR(\gk)$. Then the following properties hold:
 \begin{enumerate}
     \item $\ck$ is a Cartan subalgebra if and only if $\hk=C_{\gk}(\tk)$ where $\tk$ is a non-trivial maximal torus.
     \item $\gk=\gk'+\ck$ for any Cartan subalgebra $\ck$.
 \end{enumerate}
\end{theo}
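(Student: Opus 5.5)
We prove (1) first, in both directions, and then deduce (2) from (1) together with the weight decomposition of Proposition \ref{cohomologie}. Throughout, $p>MR(\gk)$, so every definable connected nilpotent subalgebra has class less than $p-1$. Hence Proposition \ref{structure nilpotente}, Lemma \ref{centralisateur généralisé torus} and Lemma \ref{centralisateur généralisé} all apply.

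\emph{Cartan implies $C_{\gk}(\tk)$.} Let $\ck$ be a Cartan subalgebra. It is self-normalizing, hence a $p$-algebra, and by Proposition \ref{structure nilpotente} we have $\ck=\tk(+)\uk$ with $\tk$ a central torus and $\uk$ $p$-nilpotent. First, $\tk\neq 0$. Otherwise $\ck$ is $p$-nilpotent, so by \cite[Lemma 5.3]{TZ} it lies in $F(\gk)^\circ$. The normalizer condition in the nilpotent $F^\circ(\gk)$, together with self-normalization of $\ck$, then forces $\ck=F^\circ(\gk)$. By the argument of Proposition \ref{critère nilpotence} this gives $\gk=\ck$ nilpotent, a contradiction. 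By construction ($\tk=\ck^{p^n}$), $\tk$ is the unique maximal torus of $\ck$. Lemmas \ref{centralisateur généralisé} and \ref{centralisateur généralisé torus} then give $E_{\gk}(\ck)=E_{\gk}(\tk)=C_{\gk}(\tk)$. Since $\ck$ is Engel minimal (Fact \ref{anneau résoluble}), $\ck=E_{\gk}(\ck)=C_{\gk}(\tk)$. It remains to see that $\tk$ is maximal in $\gk$. If $\tk\subsetneq\tk_1$ for a torus $\tk_1$, then $\tk_1\subseteq C_{\gk}(\tk)=\ck$ because $\tk_1$ is abelian. This contradicts the maximality of $\tk$ in $\ck$.

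\emph{$C_{\gk}(\tk)$ implies Cartan.} Let $\tk$ be a non-trivial maximal torus and set $\hk=C_{\gk}(\tk)$, which is connected by Lemma \ref{centralisateur généralisé torus}. If $\hk=\gk$, then $\tk$ is central, and $\gk/\tk$ has no non-trivial torus by Lemma \ref{maximalité}. Proposition \ref{critère nilpotence} then makes $\gk/\tk$ nilpotent, hence $\gk$ nilpotent, which is excluded. So $\hk$ is proper, and Proposition \ref{centralisateur nilpotent} shows it is nilpotent. Since $\hk=E_{\gk}(\tk)$ is def-abnormal by Fact \ref{anneau résoluble}, it is self-normalizing. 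Thus $\hk$ is definable, connected, nilpotent and self-normalizing, i.e.\ a Cartan subalgebra.

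For (2), let $\ck=C_{\gk}(\tk)$ as in (1) and view $\gk$ as a definable connected $p$-module for the torus $\tk$ (connected, $p$-divisible, abelian). Proposition \ref{cohomologie} gives $\gk=\gk^{\tk}+[\tk,\gk]=\ck+[\tk,\gk]\subseteq\ck+\gk'$, which is (2).

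The main obstacle is in the first direction: identifying $\tk$ as the maximal torus of $\ck$ and checking that $\ck$ really is Engel minimal in the form needed to equate $\ck$ with $E_{\gk}(\ck)$. If that identification fails, I would fall back on def-abnormal minimality: $C_{\gk}(\tk)=E_{\gk}(\ck)\supseteq\ck$ is nilpotent by Proposition \ref{centralisateur nilpotent}, and $\ck$ is self-normalizing inside it, so the normalizer condition forces equality.
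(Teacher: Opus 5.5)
Your proof is correct and follows essentially the paper's route: a non-trivial torus $\tk$ maximal in $\ck$, then the chain $C_{\gk}(\tk)=E_{\gk}(\tk)=E_{\gk}(\ck)=\ck$, then Proposition \ref{centralisateur nilpotent} plus def-abnormality of $E_{\gk}(\tk)$ for the converse, then Proposition \ref{cohomologie} for (2); you also make explicit the properness of $C_{\gk}(\tk)$, which the paper leaves implicit. Your fallback, however, is circular as written: Proposition \ref{centralisateur nilpotent} needs $\tk$ maximal in $\gk$, which you only derive from $\ck=C_{\gk}(\tk)$; your main line does not need it.
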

\begin{proof}
    Let $\tk\neq 0$ be a non-trivial maximal torus (which exists by of Proposition \ref{critère nilpotence}). We already know that $C_{\gk}(\tk)=E_{\gk}(\tk)$ is nilpotent (Proposition \ref{centralisateur nilpotent}) and it is self-normalizing by Fact \ref{anneau résoluble}.
    \\
    Conversely, let $\ck$ be a Cartan subalgebra. Arguing as in the proof of Proposition \ref{critère nilpotence}, observe that it contains a non-trivial torus $\tk$ which we may assume to be maximal in $\ck$. But Lemma \ref{centralisateur généralisé} and Lemma \ref{centralisateur généralisé torus} combined with Fact \ref{anneau résoluble} yields that $C_{\gk}(\tk)=E_{\gk}(\tk)=E_{\gk}(\ck)=\ck$; since $\ck$ is self-normalizing, the torus $\tk$ is necessarily maximal in $\gk$.
    \\\\
    For the second point, it suffices to apply Proposition \ref{cohomologie}.
\end{proof}
At this stage, the question of the conjugacy of tori is quite natural; however, even in the favourable case of a 2-ad-nilpotent element $x$, there is no reason why $\exp(x)=\mathrm{Id}+\ad_x$, the Lie algebra automorphism it induces, should preserve the $p$-structure. In the case of a finite-dimensional $p$-algebra, Winter introduces a new exponentiation operator and proves the conjugacy of tori in the soluble case relative to this operator. Nevertheless, this strategy relies on rather strong rigidity properties of the weight spaces decomposition, which are lacking in our context (Theorem \ref{espace de poids} is too rudimentary for this purpose). We do, however, have some control over the structure of tori and their dimension (we adapt the results obtained by Winter in \cite{Win}).
\begin{proposition}\label{tore quotient}(Compare with \cite[Theorem 2.16]{Win})
    Let $\gk$ be a connected soluble $p$-algebra of finite Morley rank such that $p> MR(\gk)$. Let $\tk$ be a maximal torus and $\ik$ a definable connected $p$-ideal. Then $\tk+\ik/\ik$ is a maximal torus of $\gk/\ik$ and all maximal tori are of this form.
\end{proposition}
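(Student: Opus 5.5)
Write $\pi:\gk\rightarrow\gk/\ik$ for the quotient map, which is a definable $p$-morphism. The image $\pi(\tk)=\tk+\ik/\ik$ is definable, connected and abelian. It is also $p$-divisible, since $\pi(t)=\pi(s^p)=\pi(s)^p$. Hence it is a torus of $\gk/\ik$, and the work lies in proving maximality and that every maximal torus arises this way. I will handle both through one lifting statement:
\begin{quote}
(*) If $\overline{\sk}$ is a torus of $\gk/\ik$, then $\pi^{-1}(\overline{\sk})^{\circ}$ contains a torus $\sk_1$ with $\pi(\sk_1)=\overline{\sk}$. Moreover, if some torus $\tk\subseteq\pi^{-1}(\overline{\sk})$ is given, $\sk_1$ can be chosen to contain $\tk$.
\end{quote}
Granting (*), the argument finishes as follows. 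If $\pi(\tk)\subseteq\overline{\sk}$ for a torus $\overline{\sk}$, we get a torus $\sk_1\supseteq\tk$, so $\sk_1=\tk$ by maximality of $\tk$, and therefore $\overline{\sk}=\pi(\tk)$; this gives maximality. If $\overline{\sk}$ is a maximal torus of $\gk/\ik$, we lift it to $\sk_1$ and enlarge $\sk_1$ to a maximal torus $\tk'$. Then $\pi(\tk')$ is a torus containing $\overline{\sk}$, so it equals $\overline{\sk}$; this gives the second assertion.

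To prove (*), set $\hk=\pi^{-1}(\overline{\sk})^{\circ}$. This is a definable connected soluble $p$-subalgebra with $p>MR(\hk)$, it contains $\ik$ (which is connected), and $\hk/\ik=\overline{\sk}$ is a torus. I would induct on $MR(\ik)$.
\begin{itemize}
\item If $\ik$ is not minimal, take a definable connected $p$-ideal $\jk$ of $\gk$ with $0<\jk<\ik$ (available since minimal ideals are abelian $p$-ideals by the Fact on minimal ideals; their connected components are $p$-ideals). Apply induction first to $\gk/\jk$ with the ideal $\ik/\jk$, then to $\jk$.
\item So assume $\ik$ is abelian and minimal, in particular a connected abelian $p$-ideal. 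By Corollary \ref{dec abélien}, $\ik^{p^N}$ is a torus and $\ker(\phi^N)$ is $p$-nilpotent. The minimal ideal $\ik$ is either a torus or $p$-nilpotent with $\ik^p=0$, and by the Fact on abelian ideals $\ik^p\subseteq Z(\gk)$ in any case.
\end{itemize}

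\emph{Case $\ik$ a torus.} By Lemma \ref{maximalité}, $\hk$ is itself a torus. We take $\sk_1=\hk$, which contains $\tk$ whenever $\tk\subseteq\pi^{-1}(\overline{\sk})$, since tori are connected.

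\emph{Case $\ik$ $p$-nilpotent ($\ik^p=0$).} Consider $\hk$ acting on the module $\ik$. If $\hk$ is nilpotent, it has class $<p-1$ because $p>MR$. Proposition \ref{structure nilpotente} then gives $\hk=\sk_1(+)\uk$ with $\sk_1$ a central torus and $\uk$ $p$-nilpotent. Since $\hk/\ik$ is $p$-divisible, $\pi(\sk_1)=\overline{\sk}$, and any torus of $\hk$ lies in $\sk_1=\mathrm{im}(\phi^i)$. If $\hk$ is not nilpotent, take a maximal torus $\sk_1\supseteq\tk$ of $\hk$ (it is non-trivial by Proposition \ref{critère nilpotence}). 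Then $C_{\hk}(\sk_1)$ is a Cartan subalgebra by the structure theorem, and $\hk=\hk'+C_{\hk}(\sk_1)$ with $\hk'\subseteq\ik$ because $\hk/\ik$ is abelian. Hence $\pi(C_{\hk}(\sk_1))=\overline{\sk}$. The nilpotent $p$-algebra $C_{\hk}(\sk_1)$ has the torus $\sk_1$ as its toral part (Proposition \ref{structure nilpotente} together with maximality of $\sk_1$). Applying $\phi^{n}$ and using the surjectivity of the $p$-map on $\overline{\sk}$, we get $\pi(\sk_1)=\overline{\sk}$.

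I expect the main obstacle to be this last step: controlling tori through extensions by a $p$-nilpotent ideal when the $p$-map is not additive. Specifically, showing that the image under $\pi$ of the toral part of $C_{\hk}(\sk_1)$ is all of $\overline{\sk}$ requires that $\phi^n$ commute with $\pi$ on this nilpotent piece of class $<p-1$. That is exactly where the hypothesis $p>MR(\gk)$ has to be used.
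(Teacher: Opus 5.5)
Your proof is correct, and its decisive step is exactly the paper's argument: take a maximal torus $\mathfrak{s}_1\supseteq\tk$ of $\hk=\pi^{-1}(\overline{\mathfrak{s}})$, write $\hk=\hk'+C_{\hk}(\mathfrak{s}_1)$ with $\hk'\subseteq\ik$, and observe that the toral part $\mathfrak{s}_1$ of this Cartan subalgebra maps onto $\overline{\mathfrak{s}}$. The paper applies this directly to an arbitrary definable connected $p$-ideal $\ik$, since it uses nothing about $\ik$ beyond $\hk/\ik$ being abelian, and Proposition~\ref{cohomologie} gives $\hk=C_{\hk}(\mathfrak{s}_1)+[\mathfrak{s}_1,\hk]$ whether or not $\hk$ is nilpotent; so your induction on $MR(\ik)$ and your case distinctions can be dropped, although your clause ``$\mathfrak{s}_1\supseteq\tk$'' usefully makes explicit the first assertion, which the paper leaves implicit.

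The step you flag as the main obstacle is not one. $\pi$ is a $p$-morphism, so $\pi(\phi^n(X))=\phi^n(\pi(X))$ for every subset $X$ without any additivity. The hypothesis $p>MR(\gk)$ is what bounds the nilpotency class of $C_{\hk}(\mathfrak{s}_1)$ so that Proposition~\ref{structure nilpotente} applies, and it is what licenses the structure theorem on Cartan subalgebras.
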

\begin{proof}
    Suppose that $\overline{\ak}$ is a maximal torus of $\overline{\gk}=\gk/\ik$ and let $\tk_1$ be a maximal torus of $\bk=\pi^{-1}(\overline{\ak})$. Then $\bk=\bk'+C_{\bk}(\tk_1)=\bk'+\ck$, where $\ck=C_{\bk}(\tk_1)$ is a Cartan subalgebra. But $\pi(\bk)$ is abelian, so $\pi(\bk')=0$ and thus $\pi(\ck)=\pi(\bk)=\overline{\ak}$. But $\ck/\tk_1$ is $p$-nilpotent by Proposition \ref{structure nilpotente}; thus, $\pi(\ck)/\pi(\tk_1)=\overline{\ak}/\pi(\tk_1)$ is $p$-nilpotent and $p$-divisible, and therefore $\pi(\tk_1)=\overline{\ak}$. Consequently, $\tk_1$ is a maximal torus of $\gk$. Indeed, let $\tk_2$ be a maximal torus containing $\tk_1$; then $\pi(\tk_2)$ is a torus containing $\overline{\ak}$, so $\pi(\tk_2)=\overline{\ak}$ and thus $\tk_2\subseteq\pi^{-1}(\overline{\ak})=\bk$, hence $\tk_2=\tk_1$.
\end{proof}
\begin{cor}(Compare with \cite[Proposition 2.17]{Win})
    Let $\gk$ be a connected soluble $p$-algebra of finite Morley rank such that $p>MR(\gk)$. Then the rank of a maximal torus is constant.
\end{cor}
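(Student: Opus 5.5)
We argue by induction on $MR(\gk)$, in the spirit of Winter's proof, using Proposition \ref{tore quotient} to pass to quotients by definable connected $p$-ideals. If $\gk$ is nilpotent, then by Proposition \ref{structure nilpotente} (the class is less than $p-1$ because $p>MR(\gk)$) we have $\gk=\tk_0(+)\uk$ with $\tk_0=\mathrm{im}(\phi^i)$ a central torus. Any torus $\tk$ satisfies $\tk=\tk^{p^i}\subseteq \tk_0$, so $\tk_0$ is the unique maximal torus and there is nothing to prove. We may therefore assume that $\gk$ is not nilpotent, and that $\gk$ is not itself a torus.

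Choose a minimal non-trivial definable connected $p$-ideal $\ik$. Such an ideal exists: take the last non-trivial term of the derived series, which is definable, connected and abelian, and replace it by its $p$-closure, which is still an abelian definable connected $p$-ideal by the lemma on $\nk_p$. By Fact \ref{} on abelian ideals, $\ik^p\subseteq Z(\gk)$. Let $\tk_1,\tk_2$ be maximal tori of $\gk$. By Proposition \ref{tore quotient}, $(\tk_j+\ik)/\ik$ are maximal tori of $\gk/\ik$, and by induction they have the same rank. Hence
\[MR(\tk_j)=MR((\tk_j+\ik)/\ik)+MR(\tk_j\cap\ik),\]
and it suffices to show that $MR(\tk_1\cap\ik)=MR(\tk_2\cap\ik)$.

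To compare the intersections, decompose the abelian $p$-algebra $\ik$ using Corollary \ref{dec abélien}: $\ik=\uk(+)\ak$ with $\ak=\mathrm{im}(\phi^N)$ a definable torus. Since $\ik^p\subseteq Z(\gk)$, the torus $\ak$ is central. The $p$-nilpotent part $\uk$ meets every torus in a finite set. Moreover, $(\tk_j\cap\ik)^{\circ}$ is a torus, because the intersection is $p$-stable and Lemma \ref{p-ideal connexe} applies, so $(\tk_j\cap\ik)^{\circ}\subseteq \ak$. On the other side, $\ak$ is a central torus, so $\tk_j+\ak$ is a torus by Lemma \ref{maximalité}; maximality of $\tk_j$ then gives $\ak\subseteq\tk_j$. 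Therefore $(\tk_j\cap\ik)^{\circ}=\ak$ for both $j$, and the ranks agree.

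The main obstacle is the uniform treatment of minimal ideals. The argument above in fact works for any abelian definable connected $p$-ideal, whether it is $p$-nilpotent (in which case both intersections are finite) or contains a torus. Care is needed to ensure that all the relevant objects, such as $\tk_j\cap\ik$ and $\mathrm{im}(\phi^N)$ restricted to $\ik$, are definable, and that the additivity of $\phi$ on the abelian ideal $\ik$ is available, which it is. If the derived-series ideal turned out to be problematic, one could instead use $F^{\circ}(\gk)$ together with Lemma \ref{tore du fitting}.
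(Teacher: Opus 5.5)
Your proof is correct and follows the paper's route: induction on $MR(\gk)$, passing to $\gk/\ik$ for a non-trivial abelian definable connected $p$-ideal $\ik$, applying Proposition \ref{tore quotient} and the induction hypothesis to the images of the maximal tori, and then comparing $MR(\tk_j\cap\ik)$. The paper splits a minimal $\ik$ into two cases: $p$-nilpotent, where the intersections are finite, and $p$-divisible, where $\ik$ is central and so lies in every maximal torus. You instead treat both at once via $\ik=\uk(+)\ak$ and $(\tk_j\cap\ik)^{\circ}=\ak$, and your separate nilpotent base case is not needed; you should also note that $\phi$ has finite kernel on $(\tk_j\cap\ik)^{\circ}\subseteq\tk_j$, which is what makes this connected $p$-stable group a torus.
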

\begin{proof}
    We proceed by induction on $MR(\gk)$. The result is trivial if $\gk$ is abelian or if $MR(\gk)=1$. Let $\ik$ be a $\gk$-minimal definable connected $p$-ideal; it is either a $p$-divisible or a $p$-nilpotent abelian ideal. We consider $\gk/\ik$ and let $\tk_1$ and $\tk_2$ be two maximal tori. By induction, $MR(\pi(\tk_1))=MR(\pi(\tk_2))$, since $\pi(\tk_k)$ is a maximal torus of $\gk/\ik$ by Proposition \ref{tore quotient}, for $k=1,2$. If $\ik$ is $p$-nilpotent, then $\tk_k\cap\ik$ is finite, for $k=1,2$; if $\ik$ is $p$-divisible, it is central and $\ik\subseteq \tk_k$, for $k=1,2$ (just consider the nilpotent $p$-algebras $C_{\gk}(\tk_k)$).
\end{proof}

\subsection{Frattini theory}
In this subsection, we develop the Frattini theory in the context of soluble $p$-algebras of finite Morley rank. For the finite-dimensional (linear) case, we refer the reader to \cite{LT1} and \cite{LT2}; we draw inspiration from the proof strategies used in these two papers.
\begin{definition}
We define the $p$-Frattini subalgebra, $\Phi_p(\gk)$, as the intersection of the maximal definable connected $p$-subalgebras.
\end{definition}
First, we shall prove an analogue of the following fact:
\begin{fait}\cite[Proposition 4.3]{TZ}
Let $\gk$ be a connected soluble Lie ring of finite Morley rank such that $\gk'$ is nilpotent. Then $\Phi(\gk)$, the intersection of the maximal definable connected subrings, is an ideal.
\end{fait}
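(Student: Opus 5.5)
The strategy is to split the maximal definable connected subrings of $\gk$ according to their position relative to the nilpotent ideal $\nk:=\gk'$. Let $\mk$ be a maximal definable connected subring.
\begin{itemize}
\item If $\gk'\subseteq\mk$, then $[\gk,\mk]\subseteq\gk'\subseteq\mk$, so $\mk$ is an ideal.
\item Otherwise $\mk+\gk'$ is a definable connected subring strictly containing $\mk$, so $\mk+\gk'=\gk$.
\end{itemize}
Since $[\gk,\Phi(\gk)]\subseteq\gk'$ lies in every maximal subring of the first kind, the whole difficulty is to show that $[\gk,\Phi(\gk)]\subseteq\mk$ for every $\mk$ of the second kind.

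\textbf{First step: $\mk\cap\nk$ is an ideal of $\gk$.} I will apply the normalizer condition to the connected nilpotent ring $\nk$ (taking connected components where needed). This gives that $N_{\nk}(\mk\cap\nk)^{\circ}$ strictly contains $(\mk\cap\nk)^{\circ}$; some care is needed if $\mk\cap\nk$ is not connected. The part of it not contained in $\mk\cap\nk$ is not contained in $\mk$. Moreover $\mk$ normalizes $\mk\cap\nk$, because $\nk$ is an ideal. Hence $N_{\gk}(\mk\cap\nk)^{\circ}$ is a definable connected subring strictly containing $\mk$, and maximality forces it to be all of $\gk$. So $\mk\cap\nk$ is an ideal, and I pass to $\bar\gk=\gk/(\mk\cap\nk)$; the correspondence theorem for definable connected subrings preserves maximality. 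In $\bar\gk$ we have $\bar\gk=\bar\mk\oplus\bar\nk$, where $\bar\mk\simeq\gk/\gk'$ is abelian and $\bar\nk$ is a nilpotent ideal. By maximality of $\bar\mk$, any definable connected ideal $\bar\kk$ with $0\neq\bar\kk\subsetneq\bar\nk$ would give a proper subring $\bar\mk+\bar\kk$ strictly containing $\bar\mk$. Thus $\bar\nk$ is a minimal definable connected ideal, hence abelian.

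\textbf{Key step: $\bar\Phi$ acts trivially on $\bar\nk$.} Write $\bar\Phi$ for the image of $\Phi(\gk)$; it lies in $\bar\mk$. Since $\bar\mk$ is abelian, $[\bar\mk,\bar\Phi]=0$. It remains to show $[\bar\nk,\bar\Phi]=0$, and I expect this to be the main obstacle. My plan is to produce other complements. For $n\in\bar\nk$, set $\mk_n=\{m+[n,m] : m\in\bar\mk\}$. Using that $\bar\nk$ and $\bar\mk$ are abelian, one checks that $\mk_n$ is a definable connected subring, being the image of $\bar\mk$ under a definable additive map. It is a complement to $\bar\nk$ and is isomorphic to $\bar\mk$, so it is again maximal by the same minimality argument. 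Its preimage in $\gk$ is a maximal definable connected subring, so it contains $\Phi(\gk)$, and therefore $\bar\Phi\subseteq\mk_n$.

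Now take $y\in\bar\Phi$. Then $y=m+[n,m]$ for some $m\in\bar\mk$, and directness of $\bar\mk\oplus\bar\nk$ gives $m=y$ and $[n,y]=0$. As $n$ was arbitrary, $[\bar\gk,\bar\Phi]=0$.

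\textbf{Conclusion.} Pulling back, $[\gk,\Phi(\gk)]\subseteq\mk\cap\nk\subseteq\mk$ for every $\mk$ of the second kind. Together with the inclusion in subrings of the first kind, $[\gk,\Phi(\gk)]$ lies in every maximal definable connected subring, hence in $\Phi(\gk)$. So $\Phi(\gk)$ is an ideal.
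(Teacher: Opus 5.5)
Your proof is correct. Its decisive step is the same as in the paper's proof (following \cite{TZ}) that $\Phi_p(\gk)$ is a $p$-ideal. Your complements $\mk_n$ are exactly the images $(\mathrm{Id}+\ad_n)(\bar{\mk})$ of $\bar{\mk}$ under the automorphism $\exp(n)$, and $[n,y]=0$ is read off from the directness of $\bar{\mk}\oplus\bar{\nk}$.

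Where you differ is in how you reach that split configuration.
\begin{itemize}
\item The paper picks a $\gk$-minimal definable connected abelian ideal $\ik\subseteq\gk'$. It then reduces, through quotients, to $\gk=\ik\oplus\mk$ with $\mk$ abelian and $\ik$ of finite index in its centralizer. It concludes that $\Phi$ lies in the finite, hence central, ideal $C_{\mk}(\ik)$.
\item You handle each maximal $\mk$ not containing $\nk=\gk'$ separately. The normalizer condition in the nilpotent ideal $\nk$ makes $\mk\cap\nk$ an ideal of $\gk$. Quotienting by it gives the split configuration directly: both summands are abelian, $\bar{\nk}$ is minimal, and the intersection is exactly $0$.
\end{itemize}
Your route needs no induction on rank and no finite-centralizer step. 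It also yields the sharper local statement $[\gk,\Phi(\gk)]\subseteq\mk\cap\gk'$ for each such $\mk$.

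Two details should be written out.

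First, the ``care'' in your first step is precisely the connected form of the normalizer condition. If $\hk$ is a definable subring of infinite index in a connected nilpotent $\nk$, then $N_{\nk}(\hk)$ contains a definable connected subgroup not contained in $\hk$. This follows by induction on rank, using $Z(\nk)^{\circ}\neq 0$. Either $Z(\nk)^{\circ}\not\subseteq\hk$, and central elements normalize $\hk$; or you pass to $\nk/Z(\nk)^{\circ}$. It applies to $\hk=\mk\cap\nk$ because $\nk$ is connected and not contained in $\mk$.

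Second, $\mk\cap\nk$ need not be connected. So the maximal subring of $\gk$ attached to $\mk_n$ should be the connected component of its preimage, not the full preimage. That component still maps onto $\mk_n$, and it is still maximal. Indeed, any larger definable connected subring contains $(\mk\cap\nk)^{\circ}$; if it maps onto $\bar{\gk}$, it therefore has finite index in $\gk$. Hence your conclusion $\bar{\Phi}\subseteq\mk_n$ stands.
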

\begin{lem}\label{maximalité p-algèbre}
Let $\gk$ be a connected $p$-algebra of finite Morley rank and let $\mk$ be a maximal definable connected subalgebra. Suppose that $\mk$ is not an ideal of $\gk$ and $\mk$ is abelian. Then $\mk$ is a $p$-subalgebra.
\end{lem}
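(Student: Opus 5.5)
Since $\mk$ is abelian, I would pass to the $p$-closure $\mk_p$ and show that it is again a definable connected subalgebra containing $\mk$. Maximality of $\mk$ then forces $\mk_p=\mk$ or $\mk_p=\gk$, and the second alternative contradicts the hypothesis that $\mk$ is not an ideal.

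\emph{Step 1: the $p$-map is additive on $\mk$.} Because $\mk$ is abelian, the Jacobson polynomials $s_i(x,y)$ vanish for $x,y\in\mk$ by the first point of the Fact on $p$-algebras. Hence $(x+y)^p=x^p+y^p$. More generally, for each $i$, the map $\phi^i$ restricted to $\mk$ is a definable group morphism. To see this inductively, note that $\mk^{p^{i}}$ consists of elements commuting pairwise: $[x^{p^i},y^{p^j}]=-\ad_x^{p^i-1}\circ\ad_y^{p^j-1}([x,y])=0$. Consequently each $\phi^i(\mk)$ is a definable connected subgroup, being the image of the connected group $\mk$ under a definable morphism.

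\emph{Step 2: $\mk_p$ is a definable connected abelian subalgebra.} By the second and third points of the same Fact, $\mk_p=\langle\bigcup_i\mk^{[p]^i}\rangle$ is an abelian $p$-subalgebra containing $\mk$. It is the sum of the subgroups $\phi^i(\mk)$. The partial sums $\mk+\phi(\mk)+\dots+\phi^n(\mk)$ form an ascending chain of definable connected subgroups, so their ranks are nondecreasing and bounded by $MR(\gk)$. Two definable connected subgroups of the same rank, one contained in the other, are equal, so the chain stabilises. Alternatively, the indecomposability theorem gives the same conclusion directly. Either way, $\mk_p$ is definable and connected, and it is a subalgebra.

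\emph{Step 3: conclusion.} Since $\mk\subseteq\mk_p$ and $\mk$ is a maximal definable connected subalgebra, either $\mk_p=\mk$ or $\mk_p=\gk$. In the first case $\mk$ is a $p$-subalgebra, as desired. In the second case $\gk=\mk_p$ is abelian, so every subspace, in particular $\mk$, is an ideal of $\gk$. This contradicts the hypothesis.

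The only point needing care is Step 1, namely that the iterated $p$-map is additive on $\mk$ so that its images are subgroups. This is exactly where abelianity of $\mk$ is used, and I expect it to be the main (though mild) obstacle.
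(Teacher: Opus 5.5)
Your proof is correct and follows the paper's argument. The paper also observes that $\mk_p$ is a definable connected subalgebra containing $\mk$, so maximality forces $\mk_p=\gk$ if $\mk$ is not a $p$-subalgebra, and then $\gk'=\mk_p'\subseteq\mk'=0$ contradicts $\mk$ not being an ideal. Your Steps 1--2 just spell out the definability and connectedness of $\mk_p$, which the paper takes from its earlier lemma on $\nk_p$ via the indecomposability theorem.
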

\begin{proof}
Suppose for contradiction that $\mk$ is not a $p$-algebra. Note that $\mk_p$ is a definable connected subalgebra, so by maximality, we have $\gk= \mk_p$. Consequently, $\gk'=\mk_p'\subseteq \mk'=0$, a contradiction.
\end{proof}
\begin{proposition}(Comparer with \cite[Theorem 2.3]{LT2})
  Let $\gk$ be a connected soluble $p$-algebra of finite Morley rank such that $p>MR(\gk)$. Then $\Phi_p(\gk)$ is a $p$-ideal.
\end{proposition}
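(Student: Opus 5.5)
We have to show that $\Phi_p(\gk)$ is a $p$-subalgebra that is also an ideal. The $p$-subalgebra part is immediate, since an intersection of $p$-subalgebras is a $p$-subalgebra. It is also definable and connected: by the DCC it is a finite intersection, and we may pass to the connected component if needed. So everything reduces to the ideal property.

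I would argue by induction on $MR(\gk)$, following the strategy of \cite[Theorem 2.3]{LT2}. Fix a maximal definable connected $p$-subalgebra $\mk$ and an element $x\in\gk$. We must show $[x,\Phi_p(\gk)]\subseteq\Phi_p(\gk)$. The natural tool is the $E$/Cartan machinery of Fact \ref{anneau résoluble}. Split according to whether $\gk$ is nilpotent.

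\textbf{Nilpotent case.} Every proper definable connected subalgebra $\mk$ satisfies $N_{\gk}(\mk)^{\circ}\supsetneq\mk$ by the normalizer condition. Because $\mk$ is a $p$-algebra, $N_{\gk}(\mk)$ is a $p$-subalgebra (Fact on normalizers), and so is its connected component (Lemma \ref{p-ideal connexe}, using that the class is $<p-1$ since $p>MR(\gk)$). By maximality $N_{\gk}(\mk)^{\circ}=\gk$, so each $\mk$ is an ideal. Hence $\Phi_p(\gk)$ is an intersection of ideals, and is therefore an ideal.

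\textbf{Non-nilpotent case.} Let $\ik$ be a $\gk$-minimal definable connected $p$-ideal; by the Fact on minimal ideals it is abelian. Consider two subcases.

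\emph{Case (a): $\ik\subseteq\Phi_p(\gk)$.} Every maximal $\mk$ contains $\ik$, and the maximal definable connected $p$-subalgebras of $\gk/\ik$ correspond to those of $\gk$ containing $\ik$. Thus $\Phi_p(\gk)/\ik=\Phi_p(\gk/\ik)$, which is a $p$-ideal by induction. We need $p>MR(\gk/\ik)$, which is clear.

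\emph{Case (b): $\ik\not\subseteq\Phi_p(\gk)$.} Then some maximal $\mk$ satisfies $\gk=\mk+\ik$ with $\mk\cap\ik$ an ideal of $\gk$; it is normalized by $\mk$ and centralized by the abelian $\ik$. By minimality and connectedness, $(\mk\cap\ik)^{\circ}=0$, so $\mk$ is a complement to $\ik$ up to finite. The goal is then to show $\Phi_p(\gk)\cap\ik$ is finite, so that $\Phi_p(\gk)^{\circ}$ embeds in $\gk/\ik\cong\mk$. From there one shows $\Phi_p(\gk)\subseteq C_{\gk}(\ik)$, with $\Phi_p(\gk)$ mapped onto $\Phi_p(\gk/\ik)$ or into something normal. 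Here Proposition \ref{cohomologie} and tori are used. If $\ik$ is a torus, it is central by Lemma \ref{tore du fitting}, and the decomposition is direct, giving $\Phi_p(\gk)\subseteq\Phi_p(\mk)$ essentially. If $\ik$ is $p$-nilpotent, one uses that all complements $\mk$ of $\ik$ intersect in $C_{\mk}(\ik)$, which is an ideal of $\gk$ because $[\ik,C_{\mk}(\ik)]=0$ and $\gk=\mk+\ik$.

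Concretely, I would prove $\Phi_p(\gk)\subseteq C_{\gk}(\ik)$ and then $\Phi_p(\gk)=\bigcap\bigl(\text{complements}\bigr)\cap\pi^{-1}(\Phi_p(\gk/\ik))$, an intersection of ideals. Lemma \ref{maximalité p-algèbre} handles the degenerate abelian maximal subalgebras, where the difference between subalgebra and $p$-subalgebra might matter.

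The main obstacle is case (b) with $\ik$ $p$-nilpotent. There I must show that the intersection of all maximal $p$-subalgebras supplementing $\ik$ is an ideal. The first attempt would be to show that each such $\mk$ contains $C_{\gk}(\ik)\cap\mk$, and that $C_{\gk}(\ik)=\ik\oplus(C_{\gk}(\ik)\cap\mk)$ is an ideal. That yields $\Phi_p(\gk)\subseteq C_{\mk}(\ik)$, which is an ideal, and the rest follows by induction in $\gk/\ik$.
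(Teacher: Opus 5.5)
Your skeleton is sound up to the decisive step. The nilpotent case, case (a) via $\Phi_p(\gk)/\ik=\Phi_p(\gk/\ik)$, and the subcase where $\ik$ is a torus (hence central, so every supplement of $\ik$ is an ideal) are fine. The final bookkeeping is also fine \emph{provided} you know $\Phi_p(\gk)\subseteq C_{\gk}(\ik)$. In that case $\Phi_p(\gk)=C_{\gk}(\ik)\cap\pi^{-1}(\Phi_p(\gk/\ik))\cap\bigcap_{\mk}C_{\mk}(\ik)$, with $\mk$ running over the maximal definable connected $p$-subalgebras supplementing $\ik$, and each $C_{\mk}(\ik)$ is an ideal as you observe. (The supplements themselves need not be ideals, so ``an intersection of ideals'' only makes sense after intersecting with $C_{\gk}(\ik)$.)

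But that inclusion, when $\ik^{[p]}=0$ and $\ik$ is not central, is the entire content of the proposition, and you never prove it. Your ``first attempt'' notes two things: that $\mk$ contains $C_{\gk}(\ik)\cap\mk$, which is a tautology, and that $C_{\gk}(\ik)=\ik+C_{\mk}(\ik)$ is an ideal. Neither says anything about whether elements of $\Phi_p(\gk)$ centralize $\ik$. So ``that yields $\Phi_p(\gk)\subseteq C_{\mk}(\ik)$'' does not follow, and ``all complements intersect in $C_{\mk}(\ik)$'' is merely asserted. What you need is, for each $x\in\mk\setminus C_{\mk}(\ik)$, another maximal definable connected $p$-subalgebra omitting $x$. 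Nothing in your plan produces further supplements of $\ik$.

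The missing idea, which is the heart of the paper's proof, is to move $\mk$ by $\exp(a)=\mathrm{Id}+\ad_a$ with $a\in\ik$. Since $\ik$ is an abelian ideal, $\ad_a^2=0$ and $[[a,x],[a,y]]=0$, so $\exp(a)$ is a definable Lie automorphism fixing $\ik$ pointwise. If it also respects the $p$-map, then $\exp(a)(\mk)$ is again a maximal definable connected $p$-subalgebra supplementing $\ik$. Now take $x\in\mk\cap\exp(a)(\mk)$ and write $x=y+[a,y]$ with $y\in\mk$; then $[a,x]=[a,y]=x-y\in\mk\cap\ik$. Hence any $x\in\Phi_p(\gk)$ satisfies $[\ik,x]\subseteq\mk\cap\ik$. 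Since $[\ik,x]$ is connected and $\mk\cap\ik$ is finite, $x\in C_{\mk}(\ik)$. (The paper sidesteps this last point by reducing to $\mk\cap\ik=0$.)

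The real subtlety is compatibility with the $p$-map. The paper first reduces to the case where $\mk$ is abelian and $\ik$ has finite index in its centralizer. Then $\exp(a)(\mk)$ is an abelian maximal definable connected subalgebra that is not an ideal, hence automatically a $p$-subalgebra by the earlier lemma on such subalgebras. In your setting you could instead check directly from Jacobson's formula: because $\ik$ is abelian with $\ik^{[p]}=0$, one has $(g+[a,g])^{p}=g^{p}+\ad_g^{p-1}([a,g])=g^{p}+[a,g^{p}]$ for every $g\in\gk$, so $\exp(a)$ commutes with the $p$-map. With this step supplied your induction would go through, even without the paper's reduction. As written, however, case (b) is not proved.
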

\begin{proof}
  The proof is very close to that of \cite[Proposition 4.3]{TZ}. First, note that $\gk$ has a $\gk$-minimal definable connected abelian $p$-ideal $\ik$ contained in $\gk'$ and that $C_{\gk}(X)$ is a $p$-algebra, for every subset $X$. We then reduce to the following situation: $\ik$ is of finite index in its centralizer and $\gk=\ik\oplus \mk$, where $\mk$ is a maximal definable connected $p$-subalgebra, which does not contain $\ik$ and which is abelian. Since $\ik$ is $\mk$-minimal, $\mk$ is in fact a maximal definable subalgebra: let $\mk_1$ be a definable subalgebra containing $\mk$; then $\mk_1=(\ik\cap \mk_1)\oplus \mk$ and therefore either $\mk_1=\mk$ or $\mk_1=\gk$.
  \\\\
  We claim that $\Phi_p(\gk)$ is contained in the finite $p$-ideal $C_{\mk}(\ik)$. Indeed, let $x\in \mk/C_{\mk}(\ik)$; there exists $a\in \ik$ such that $[a,x]\neq 0$. In particular, $\ad_a$ is 2-nilpotent and $\exp(a)=\mathrm{Id}+\ad_a$ is an automorphism of the Lie algebra structure (but does not a priori preserve the $p$-structure). Observe that $\exp(a)(\mk)$ is a maximal definable connected subalgebra. Suppose for contradiction that $\ik\subseteq \exp(a)(\mk)$; then for any $b\in \ik$, there exists $m\in \mk$ such that $b=m+[a,m]$, so $m\in \mk\cap \ik=0$, a contradiction. Consequently, $\exp(a)(\mk)$ does not contain $\gk'$ and thus $\exp(a)(\mk)$ is not an ideal (otherwise, $\gk/\exp(a)(\mk)$ would be abelian by maximality). By Lemma \ref{maximalité p-algèbre}, $\exp(a)(\mk)=\mk_1$ is in fact a $p$-subalgebra. If $x\in\mk_1\cap\mk$, then $x=y+[a,y]$ with $y\in \mk$, thus $[a,y]\in \ik\cap\mk$. But $\ik\cap \mk=0$, and therefore $[a,y]=0$. This implies that $x=y$ and $[a,x]=0$, a contradiction. Consequently, $x\notin \Phi_p(\gk)$ and the claim is proved. Since $C_{\mk}(\ik)$ is a finite ideal, it is central and thus $\Phi_p(\gk)\lhd \gk$.
\end{proof}
\begin{cor}
Let $\gk$ be a connected soluble $p$-algebra of finite Morley rank such that $p>MR(\gk)$. Then $\Phi_p(\gk)^{\circ}$ is a nilpotent $p$-ideal.
\end{cor}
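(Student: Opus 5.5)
By the preceding proposition, $\Phi_p(\gk)$ is a $p$-ideal. The plan is to show that its connected component is still a $p$-ideal and is nilpotent, adapting the classical argument that the Frattini subalgebra is nilpotent (compare \cite{LT1}, and \cite[Proposition 4.3]{TZ} together with its corollaries for Lie rings).

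\emph{Step 1: $\Phi_p(\gk)^{\circ}$ is a $p$-ideal.} It is a definable connected ideal, since $\ad_x(\Phi_p(\gk)^{\circ})$ is connected for every $x\in\gk$. For stability under the $p$-map, I would use that $\phi$ restricted to a connected definable set has connected image in the relevant sense. Alternatively, once nilpotency of class less than $p-1$ is known (class $<MR(\gk)<p$), I would apply Lemma \ref{p-ideal connexe} to the nilpotent $p$-algebra $\Phi_p(\gk)$, or its nilpotent part. So the order is: prove nilpotency first, then deduce $p$-closure.

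\emph{Step 2: nilpotency.} Set $\mathfrak{f}=\Phi_p(\gk)^{\circ}$ and let $\ck$ be a Cartan subalgebra of $\gk$, which is a $p$-algebra by Fact \ref{anneau résoluble}. The key claim is a Frattini-type argument: for any definable connected $p$-ideal $\ik\subseteq\mathfrak{f}$ and any Cartan subalgebra $\ck_0$ of $\ik$ (or of a suitable connected subalgebra), we should have $\gk=\ik+N_{\gk}(\ck_0)$. I would derive this from the structure theorem, which gives $\gk=\gk'+\ck$, from the characterization of Cartan subalgebras as $C(\tk)$, and from Fact \ref{anneau résoluble}(1) applied to $E_{\gk}(\ck_0)$. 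The normalizer is definable and is a $p$-subalgebra, and its connected component is proper unless $\ck_0\lhd\gk$. If it were proper, it would lie in some maximal definable connected $p$-subalgebra $\mk$, and then $\gk=\ik+\mk\subseteq\mathfrak{f}+\mk=\mk$, a contradiction. Hence $\ck_0$ is an ideal. Applying this to $\ik=\mathfrak{f}$, a Cartan subalgebra $\ck_0$ of $\mathfrak{f}$ is normalized by $\mathfrak{f}$ and is self-normalizing in $\mathfrak{f}$, so $\ck_0=\mathfrak{f}$ and $\mathfrak{f}$ is nilpotent.

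\emph{Main obstacle.} The hard part is the Frattini argument $\gk=\ik+N_{\gk}(\ck_0)$, which in the finite-dimensional setting relies on conjugacy of Cartan subalgebras. That is unavailable here, since even the exponential need not respect the $p$-structure. I would first try to replace conjugacy by the def-abnormality of $E_{\gk}(\ck_0)$ from Fact \ref{anneau résoluble}(1). Concretely, I would work in $\hk=\ik+E_{\gk}(\ck_0)$, show that $E_{\gk}(\ck_0)\cap\ik=\ck_0$ by Engel-minimality, and use Proposition \ref{cohomologie} with respect to a maximal torus of $\ck_0$ to get $\gk=\ik+C_{\gk}(\tk_0)$ with $C_{\gk}(\tk_0)$ normalizing $\ck_0$.

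If $\ck_0$ contains no torus, Proposition \ref{critère nilpotence} already makes $\mathfrak{f}$ nilpotent, so that case is immediate. Only the case where $\mathfrak{f}$ has a non-trivial torus needs this argument.
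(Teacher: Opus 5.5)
\textbf{Gap: the argument is circular.} In Step 1 you postpone the $p$-closure of $\mathfrak{f}=\Phi_p(\gk)^{\circ}$ until nilpotency is known, via Lemma \ref{p-ideal connexe}. Step 2, however, uses that $\mathfrak{f}$ is a $p$-algebra throughout:
\begin{itemize}
\item you apply your Frattini claim, which you stated for $p$-ideals, to $\ik=\mathfrak{f}$;
\item a Cartan subalgebra $\ck_0$ of $\mathfrak{f}$ is a $p$-subalgebra (as a self-normalizing subring) only if $\mathfrak{f}$ is $p$-closed;
\item the results you invoke inside $\mathfrak{f}$ are stated only for connected soluble $p$-algebras, namely Proposition \ref{critère nilpotence}, the existence of a non-trivial torus in $\ck_0$, and $C_{\mathfrak{f}}(\tk_0)=\ck_0$ from the Cartan/torus theorem.
\end{itemize}
Your other option in Step 1, that $\phi$ sends a connected definable set to a connected one, is not a valid principle for a non-additive definable map. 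Likewise, putting $N_{\gk}(\ck_0)^{\circ}$ inside a maximal definable connected $p$-subalgebra presupposes that this connected component is $p$-closed, which you do not justify. The paper has the same ordering (nilpotency first, then Lemma \ref{p-ideal connexe}). But it obtains nilpotency by running the Lie-ring argument of \cite[Proposition 4.5]{TZ}, whose only $p$-specific input is that Cartan subrings are $p$-subalgebras. It therefore never needs $p$-structure on $\mathfrak{f}$ before nilpotency.

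\textbf{Missing idea and repair.} The $p$-closure of $\mathfrak{f}$ holds a priori. For $x,y\in\mathfrak{f}$, $\phi(x+y)-\phi(x)-\phi(y)=\sum_i s_i(x,y)$ is a sum of $p$-fold brackets of elements of $\mathfrak{f}$, so it lies in $\mathfrak{f}$. Since $\Phi_p(\gk)$ is $p$-closed, $x\mapsto \phi(x)+\mathfrak{f}$ is then a definable homomorphism from the connected group $\mathfrak{f}$ to the finite group $\Phi_p(\gk)/\mathfrak{f}$, hence trivial. The same argument shows the connected component of any definable $p$-subalgebra is $p$-closed, which also repairs your normalizer step.

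With this in place, your torus route closes and gives a genuinely different proof from the paper's. Suppose $\mathfrak{f}$ is not nilpotent.
\begin{itemize}
\item Proposition \ref{critère nilpotence} yields a non-trivial maximal torus $\tk_0$ of $\mathfrak{f}$, and $C_{\mathfrak{f}}(\tk_0)$ is nilpotent by the structure theorem.
\item Proposition \ref{cohomologie} gives $\gk=C_{\gk}(\tk_0)+[\tk_0,\gk]\subseteq C_{\gk}(\tk_0)+\mathfrak{f}$.
\item $C_{\gk}(\tk_0)$ is a definable $p$-subalgebra, and it is connected by Lemma \ref{centralisateur généralisé torus}. If it were proper, it would lie in a maximal definable connected $p$-subalgebra, which contains $\mathfrak{f}$; that contradicts $\gk=C_{\gk}(\tk_0)+\mathfrak{f}$.
\item Hence $C_{\gk}(\tk_0)=\gk$, so $\mathfrak{f}=C_{\mathfrak{f}}(\tk_0)$ is nilpotent, a contradiction.
\end{itemize}
This avoids the normalizer and $E_{\gk}(\ck_0)$ altogether. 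It replaces the imported Lie-ring Frattini argument with the paper's internal torus theory.
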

\begin{proof}
Note that Cartan subrings are in fact $p$-subalgebras and argue as in the proof of \cite[Proposition 4.5]{TZ}. For the stability under the $p$-map, it suffices to use Lemma \ref{p-ideal connexe}.   
\end{proof}
The following two lemmas are easy adaptations of results established by Towers in \cite{T}.
\begin{lem}\label{Frattini 1}
Let $\gk$ be a connected Lie ring (respectively, $p$-algebra) of finite Morley rank. Let $\hk$ be a definable connected subring (respectively, $p$-subalgebra) and let $\ik$ be a definable connected ideal (respectively, $p$-ideal) contained in $\Phi(\hk)$ (respectively, in $\Phi_p(\hk)$). Then $\ik$ is contained in $\Phi(\gk)$ (respectively, in $\Phi_p(\gk)$).
\end{lem}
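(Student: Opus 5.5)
Before the main argument I would settle the reading of the hypothesis: $\hk$ is a definable connected subring (respectively $p$-subalgebra) of $\gk$, and $\ik$ is an ideal of $\gk$ contained in $\Phi(\hk)$. This is Towers' setting. The plan is to adapt Towers' argument, which is a Frattini-type argument via supplements. We argue by contradiction. Suppose $\ik\not\subseteq\Phi(\gk)$. Then there is a maximal definable connected subring $\mk$ of $\gk$ with $\ik\not\subseteq\mk$; in the $p$-case, $\mk$ is a maximal definable connected $p$-subalgebra.

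First I show that $\mk$ supplements $\ik$. Since $\ik$ is an ideal and both $\ik$ and $\mk$ are definable and connected, $\ik+\mk$ is a definable connected subring by the indecomposability theorem. In the $p$-case it is also a $p$-subalgebra. To see this, use the Jacobson formula: for $i\in\ik$ and $m\in\mk$ we have $(i+m)^p=i^p+m^p+\sum s_k(i,m)$. Each $s_k(i,m)$ is a sum of brackets involving $i$, so it lies in the ideal $\ik$, and $\ik^p\subseteq\ik$. Since $\ik+\mk$ strictly contains $\mk$, maximality gives $\gk=\ik+\mk$.

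Next I intersect with $\hk$. We get $\hk=\hk\cap(\ik+\mk)=\ik+(\hk\cap\mk)$ by the modular law, because $\ik\subseteq\hk$. Let $\mk_0=(\hk\cap\mk)^{\circ}$. Then $\ik+\mk_0$ is definable and connected and has finite index in $\hk$, so it equals $\hk$ by connectedness. In the $p$-case, $\hk\cap\mk$ is a $p$-subalgebra, and $\mk_0$ needs to be one as well. This is where some care is needed: I would either invoke Lemma \ref{p-ideal connexe}, or avoid connected components altogether by working with $(\hk\cap\mk)_p$ via the indecomposability theorem.

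The final step is the standard Frattini conclusion, carried out inside $\hk$. The subring $\mk_0$ is proper in $\hk$: otherwise $\ik\subseteq\hk\subseteq\mk$, a contradiction. By the descending chain condition and finite rank, $\mk_0$ is contained in some maximal definable connected subring (respectively $p$-subalgebra) $\mk_1$ of $\hk$. By hypothesis $\ik\subseteq\Phi(\hk)\subseteq\mk_1$, hence $\hk=\ik+\mk_0\subseteq\mk_1$, which contradicts the properness of $\mk_1$. Therefore $\ik\subseteq\mk$ for every such $\mk$, that is, $\ik\subseteq\Phi(\gk)$ (respectively $\Phi_p(\gk)$).

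The expected main obstacle is the $p$-structure bookkeeping. Two points must be checked: that $\ik+\mk$ is $p$-closed, which the Jacobson formula handles, and that the connected component of $\hk\cap\mk$ is still $p$-closed. For the second point, if Lemma \ref{p-ideal connexe} does not apply directly, I would replace $\mk_0$ by the definable connected $p$-subalgebra generated by the $p$-powers of $\mk_0$. This is contained in $\hk\cap\mk$ because that intersection is $p$-closed. It is proper in $\hk$ as long as it does not contain $\ik$, and it still satisfies $\ik+\mk_0\subseteq\hk$ with finite index, so the same contradiction goes through.
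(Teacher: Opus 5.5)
Your argument is the paper's argument. From $\ik\not\subseteq\mk$ you get $\gk=\ik+\mk$, the modular law gives $\hk=\ik+(\hk\cap\mk)^{\circ}$, and the non-generator property of $\Phi(\hk)$ then forces $\hk\subseteq\mk$. For Lie rings this is complete. Your Jacobson-formula check that $\ik+\mk$ is $p$-closed fills in a point the paper leaves implicit.

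\textbf{The gap in the $p$-case.} You need $\mk_0=(\hk\cap\mk)^{\circ}$ to be $p$-closed, and none of your three justifications establishes this. The paper also passes over it with ``virtually identical''.
\begin{itemize}
\item Lemma \ref{p-ideal connexe} assumes nilpotency class less than $p-1$. The subalgebra $\hk\cap\mk$ need not satisfy this.
\item Passing to $(\hk\cap\mk)_p$ changes nothing. The algebra $\hk\cap\mk$ is already $p$-closed; the problem is connectedness, not $p$-closure.
\item Your fallback is circular. Any definable connected $p$-subalgebra $C$ with $\mk_0\subseteq C\subseteq\hk\cap\mk$ contains $\mk_0$ with finite index, so $C=\mk_0$. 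Its existence is therefore exactly the claim that $\mk_0$ is $p$-closed. Building it via the indecomposability theorem would require the sets $\phi^i(\mk_0)$ to be indecomposable. That is not justified unless $\phi$ is additive on $\mk_0$, which is why the paper's analogous lemma assumes nilpotency class less than $p-1$.
\end{itemize}

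\textbf{A short general fix.} Let $\ak$ be a definable $p$-subalgebra.
\begin{itemize}
\item First, $\ak^{\circ}$ is closed under the bracket. For $a\in\ak$, the preimage $\ad_a^{-1}(\ak^{\circ})$ is a definable subgroup of finite index in $\ak$, so it contains $\ak^{\circ}$.
\item Hence for $x,y\in\ak^{\circ}$, the Jacobson terms $s_i(x,y)$ lie in $\ak^{\circ}$, since they are $\mathbb{F}_p$-combinations of iterated brackets in $x$ and $y$.
\item So $x\mapsto\phi(x)+\ak^{\circ}$ is a definable additive map from the connected group $\ak^{\circ}$ to the finite group $\ak/\ak^{\circ}$.
\item Its kernel is a definable subgroup of finite index in $\ak^{\circ}$, so it is all of $\ak^{\circ}$. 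That is, $\phi(\ak^{\circ})\subseteq\ak^{\circ}$.
\end{itemize}
Applying this to $\ak=\hk\cap\mk$ makes $\mk_0$ a definable connected $p$-subalgebra, and the rest of your argument goes through unchanged.
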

\begin{proof}
    Suppose for contradiction that there exists a maximal definable connected subring $\mk$ that does not contain $\ik$. Then $\gk=\ik+\mk$ and therefore $\hk=\ik+(\mk\cap\hk)=\Phi(\hk)+(\mk\cap\hk)^{\circ}$; thus, $\ik\subseteq\hk\subseteq \mk$, a contradiction.
    \\\\
    The proof for the case of a $p$-algebra is virtually identical.
\end{proof}
\begin{lem}\label{Frattini 2}(compare with \cite[Lemma 7.2]{T})
Let $\gk$ be a connected Lie ring (respectively, $p$-algebra) of finite Morley rank. Let $\ik$ be a definable connected abelian ideal (respectively, $p$-ideal) such that $(\ik\cap \Phi(\gk))^{\circ}=0$ (respectively, $(\ik\cap\Phi_p(\gk))^{\circ}=0$). Then there exists a definable connected subring (respectively, $p$-subalgebra) $\hk$ such that $\gk=\hk(+)\ik$ (we say in this case that $\gk$ is split over $\ik$).
\end{lem}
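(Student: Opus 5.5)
We argue by induction on $MR(\ik)$, following Towers' strategy. The aim is to find a maximal definable connected subring (respectively, $p$-subalgebra) $\mk$ with $\gk=\ik+\mk$ and then to show that $\ik\cap\mk$ is finite, possibly after shrinking.

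First, since $(\ik\cap\Phi(\gk))^{\circ}=0$ and $\ik$ is connected, $\ik\not\subseteq\Phi(\gk)$ as soon as $\ik\neq 0$. So there is a maximal definable connected subring $\mk$ with $\ik\not\subseteq\mk$. Then $\ik+\mk$ is a definable connected subring properly containing $\mk$; it is a subring because $\ik$ is an ideal, and it is connected as a sum of connected groups. By maximality $\gk=\ik+\mk$. Since $\ik$ is abelian and $\gk=\ik+\mk$, the subgroup $\ik\cap\mk$ is normalized by $\mk$ and centralized by $\ik$, so $(\ik\cap\mk)^{\circ}$ is a definable connected abelian ideal of $\gk$ properly contained in $\ik$.

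In the $p$-algebra case, $\ik\cap\mk$ is also stable under the $p$-map. Passing to the connected component should preserve this, arguing as in Lemma \ref{p-ideal connexe}, since the $p$-map is additive on the abelian $\ik$.

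Set $\ik_1=(\ik\cap\mk)^{\circ}$.
\begin{itemize}
\item If $\ik_1=0$, then $\gk=\mk(+)\ik$ and we take $\hk=\mk$.
\item Otherwise we apply induction inside $\mk$: $\ik_1$ is a definable connected abelian ideal (respectively, $p$-ideal) of the connected $\mk$ with $MR(\ik_1)<MR(\ik)$.
\end{itemize}

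To use induction we need $(\ik_1\cap\Phi(\mk))^{\circ}=0$. Here we invoke Lemma \ref{Frattini 1}. The group $(\ik_1\cap\Phi(\mk))^{\circ}$ is a definable connected subgroup contained in $\Phi(\mk)$, and we must check it is an ideal of $\mk$. In the soluble $p$-algebra setting this follows from the preceding proposition ($\Phi_p$ is a $p$-ideal). In general, the cleaner route is to replace $\Phi(\mk)$ by the largest definable connected ideal of $\mk$ contained in $\Phi(\mk)$, as Towers does; the hypothesis only needs to be checked for ideals. If $\jk\subseteq\ik_1$ is a definable connected ideal of $\mk$ contained in $\Phi(\mk)$, then $\jk$ is also an ideal of $\gk$, because $[\ik,\jk]=0$ as $\ik$ is abelian and $\gk=\ik+\mk$. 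Lemma \ref{Frattini 1} (applied with $\hk=\mk$) then gives $\jk\subseteq\Phi(\gk)\cap\ik$, hence $\jk=0$.

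By induction, applied in the version ``$\ik$ abelian ideal containing no nonzero definable connected ideal lying in $\Phi$'', we get $\mk=\hk(+)\ik_1$ for a definable connected subring (respectively, $p$-subalgebra) $\hk$. Then $\gk=\ik+\mk=\ik+\hk$ and $\hk\cap\ik\subseteq\mk\cap\ik$. Since $\hk\cap\ik_1$ is finite and $\ik_1$ has finite index in $\ik\cap\mk$, the intersection $\hk\cap\ik$ is finite, so $\gk=\hk(+)\ik$.

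The main obstacle is the induction hypothesis mismatch: the statement is phrased with $\Phi(\gk)$, whereas the inductive step naturally yields information only about ideals inside $\Phi(\mk)$. I would therefore first prove the slightly stronger, ideal-based formulation above, which implies the statement, and then run the induction. A secondary point is checking that $p$-stability survives passing to connected components, which should follow from additivity of the $p$-map on abelian $\ik$.
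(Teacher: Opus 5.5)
Your proof is correct, but it takes a different route from the paper's.

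\textbf{The paper's argument.} There is no induction. The paper chooses a definable connected subring $\ck$ \emph{minimal} subject to $\gk=\ik+\ck$. Minimality immediately gives $(\ik\cap\ck)^{\circ}\subseteq\Phi(\ck)$: if some maximal definable connected subring $\mk$ of $\ck$ missed it, then $\ck=\mk+(\ik\cap\ck)^{\circ}$, so $\gk=\ik+\mk$, contradicting minimality. Since $\ik$ is abelian and $\gk=\ik+\ck$, the group $(\ik\cap\ck)^{\circ}$ is an ideal of $\gk$. Lemma \ref{Frattini 1} then puts it inside $(\Phi(\gk)\cap\ik)^{\circ}=0$.

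\textbf{Your argument.} You split off one \emph{maximal} supplement $\mk$ at a time and recurse. Unwound, this builds a chain of maximal definable connected subrings ending in a supplement $\hk$, so it is a stepwise version of the same idea. The cost is exactly the point you flagged. To carry the Frattini hypothesis from $\gk$ down to $\mk$, you cannot control $(\ik_1\cap\Phi(\mk))^{\circ}$ unless $\Phi(\mk)$ is an ideal of $\mk$. So you must pass to the ideal-based formulation and induct on that.

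The remaining steps all hold:
\begin{itemize}
\item Your check of the ideal-based hypothesis is sound: a definable connected ideal $\mathfrak{j}\subseteq\ik_1$ of $\mk$ lying in $\Phi(\mk)$ is an ideal of $\gk$ because $[\ik,\mathfrak{j}]=0$, and then Lemma \ref{Frattini 1} applies.
\item The finite-index bookkeeping giving $\hk\cap\ik$ finite is correct.
\item The $p$-stability of $(\ik\cap\mk)^{\circ}$ via additivity of the $p$-map on the abelian $\ik$ is correct.
\end{itemize}

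\textbf{A small unstated check.} In the $p$-case you also need $\ik+\mk$ to be a $p$-subalgebra before invoking maximality of $\mk$. This follows from the Jacobson formula, since each $s_i(x,y)$ with $x\in\ik$ lies in the ideal $\ik$. The paper's version needs the same check.

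\textbf{Comparison.} The paper's minimal-supplement choice avoids both the induction and the reformulation. The one group to which the hypothesis is applied, $(\ik\cap\ck)^{\circ}$, is automatically an ideal of $\gk$. So the paper's argument in fact also proves your stronger ideal-based version, just more economically.
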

\begin{proof}
Note that there exists a definable connected subring such that $\gk=\ik+\ck$ and take $\ck$ minimal with respect to this property. First, $(\ik\cap\ck)^{\circ}\subseteq \Phi(\ck)$. Indeed, otherwise there exists a maximal definable connected subring $\mk$ in $\ck$ that does not contain $(\ik\cap\ck)^{\circ}$. But then $\ck=\mk+(\ik\cap \ck)^{\circ}$ and $\gk=\ik+\mk+(\ik\cap \ck)^{\circ}=\ik+\mk$, a contradiction with the choice of $\ck$. By abelianity of $\ik$, we deduce that $(\ik\cap \ck)^{\circ}$ is a definable connected ideal of $\gk$ and it is therefore contained in $(\Phi(\gk)\cap \ik)^{\circ}=0$ by Lemma \ref{Frattini 1}. Thus, $\gk=\ik(+)\ck$.
\\\\
The proof for the case of a $p$-algebra is virtually identical.
\end{proof}
We introduce the socle and we make the connection with the Frattini theory.
\begin{definition}
Let $\gk$ be a connected Lie ring of finite Morley rank. We define the socle, $S(\gk)$, as the subgroup generated by the definable connected $\gk$-minimal abelian ideals.
\end{definition}
\begin{lem}
Let $\gk$ be a connected Lie ring of finite Morley rank. Then $S(\gk)$ is a definable connected abelian ideal and $S(\gk)=\ik_1(+)\dots(+)\ik_n$, where $\ik_k$ is a definable connected $\gk$-minimal abelian ideal for $k\in\{1,\dots,n\}$. Moreover, if $\gk$ is a connected $p$-algebra of finite Morley rank, then $S(\gk)$ is a $p$-algebra.
\end{lem}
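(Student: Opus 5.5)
We argue in three steps: definability and connectedness, the direct-sum decomposition (which also gives abelianity), and the $p$-structure.

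For the first step, each definable connected $\gk$-minimal abelian ideal is a definable connected subgroup, hence indecomposable and containing $0$. By Zilber's indecomposability theorem, the subgroup $S(\gk)$ they generate is definable and connected. It is an ideal as a sum of ideals.

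For the decomposition, we build a chain greedily. Choose $\ik_1$, and given $\sk_k=\ik_1+\dots+\ik_k$ with $MR(\sk_k)=\sum_j MR(\ik_j)$, pick a minimal ideal $\ik$ not contained in $\sk_k$, if one exists. Then $(\ik\cap \sk_k)^{\circ}$ is a definable connected ideal of $\gk$, being the connected component of an intersection of ideals, which is an ideal by Lemma \ref{p-ideal connexe}-type reasoning (the connected component of a definable ideal in a connected Lie ring is an ideal, since $\ad_x$ maps it to a connected group). It is properly contained in $\ik$, so by $\gk$-minimality it is $0$. Hence $\ik\cap\sk_k$ is finite, and we set $\ik_{k+1}=\ik$. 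Ranks strictly increase and are bounded by $MR(\gk)$, so the process stops with $S(\gk)=\ik_1(+)\dots(+)\ik_n$, where $(+)$ denotes a sum with finite pairwise intersections. Abelianity follows: $[\ik_j,\ik_k]\subseteq \ik_j\cap\ik_k$ is finite for $j\neq k$. The group $[\ik_j,\ik_k]$ is the image of the connected group $\ik_k$ under $\ad_x$ for each fixed $x\in\ik_j$, so by indecomposability it is connected, hence $0$. Together with abelianity of each $\ik_j$, this makes $S(\gk)$ abelian.

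For the $p$-structure, we show each $\ik_j$ is a $p$-ideal, or at least that $S(\gk)$ is $p$-closed. Since $S(\gk)$ is abelian, the $p$-map is additive on it (Fact on $s_i$), so it suffices that $\ik_j^{p}\subseteq S(\gk)$ for each $j$. By the Fact on abelian ideals, $\ik_j^p\subseteq Z(\gk)$. Additivity of $\phi$ on $\ik_j$ makes $\ik_j^p$ a definable connected subgroup, and since it is central it is an ideal. The main obstacle is that $\ik_j^p$ is central but need not lie inside $\ik_j$. To handle it, consider $(\ik_j)_p=\ik_j+\ik_j^p+\ik_j^{p^2}+\dots$. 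By the Fact on $\hk_p$ together with the indecomposability theorem, this is a definable connected abelian ideal, equal to $\ik_j$ plus a central part. Any definable connected subgroup of $Z(\gk)$ is an ideal, so the $\gk$-minimal ideals include the minimal definable connected subgroups of $Z(\gk)^{\circ}$ (e.g. of rank one). Hence every definable connected subgroup $A$ of $Z(\gk)$ is generated by minimal ones: take a minimal definable connected subgroup $A_1\le A$, then one inside $A$ not contained in $A_1$, and so on, using indecomposability to keep everything definable connected. Thus $A\subseteq S(\gk)$, and applying this to $A=\ik_j^{p^i}$ gives $\ik_j^{p^i}\subseteq S(\gk)$ for all $i$. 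Therefore $S(\gk)^p\subseteq S(\gk)$, so $S(\gk)$ is a $p$-algebra.
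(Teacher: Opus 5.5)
\textbf{Verdict: the proposal has a genuine gap in the $p$-part; the first two steps are fine.}

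Your first two steps are sound and are essentially the paper's argument. The paper takes a generating family $\ik_1,\dots,\ik_n$ with $n$ minimal, whereas you build it greedily. Your remark that each $[x,\ik_k]$ is a connected subgroup of the finite group $\ik_j\cap\ik_k$ supplies a justification the paper leaves implicit.

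The gap is in the $p$-part. The paper disposes of it in one line, asserting that a definable $\gk$-minimal abelian ideal is already a $p$-ideal. You are right to distrust this. Take $\gk=\K h\oplus\K x\oplus\K z$ with $[h,x]=x$, $z$ central, $h^{[p]}=h$, $x^{[p]}=z$, $z^{[p]}=0$. Then $\K x=\gk'$ is definable, connected, abelian and $\gk$-minimal, but $(\K x)^{[p]}=\K z\not\subseteq \K x$.

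However, your replacement rests on the claim that every definable connected subgroup $A\le Z(\gk)$ is generated by minimal definable connected subgroups. Nothing you wrote implies this. Knowing that minimal central subgroups lie in $S(\gk)$ says nothing about whether they exhaust $A$. The step ``then one inside $A$ not contained in $A_1$'' presupposes that such a subgroup exists, and in general it need not. Take $\gk$ abelian with $(\gk,+)$ a free module of infinite rank over $\mathbb{F}_p[X]/(X^2)$, and let $\phi$ be multiplication by $X$; this is a legitimate $p$-map on an abelian Lie ring over $\mathbb{F}_p$. This is a connected $p$-algebra of Morley rank $2$ whose only definable connected subgroups are $0$, $X\gk$ and $\gk$. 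Hence $S(\gk)=X\gk$ is strictly smaller than the definable connected central subgroup $Z(\gk)=\gk$. So the general claim is false, and you give no separate reason why $\ik_j^{[p]}$ in particular should be generated by minimal subgroups.

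The missing idea is to use the minimality of $\ik_j$ itself rather than any property of $Z(\gk)$. Let $B\neq 0$ be a definable connected subgroup with no proper non-trivial definable connected subgroup, and let $f$ be a definable homomorphism. Then $\ker f\cap B$ is finite or all of $B$, so $f(B)$ is either $0$ or again such a minimal subgroup.

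\begin{itemize}
\item Apply this to $f=\ad_g$ for $g\in\gk$. Then the subgroup of $\ik_j$ generated by its minimal definable connected subgroups is $\ad_g$-stable for every $g$. By indecomposability it is a non-zero definable connected ideal, so it equals $\ik_j$ by $\gk$-minimality.
\item Apply it to $f=\phi|_{\ik_j}$, which is additive because $\ik_j$ is abelian. Then $\ik_j^{[p]}$ is a sum of subgroups each of which is $0$ or a minimal definable connected subgroup of $Z(\gk)$. Each such subgroup is a $\gk$-minimal abelian ideal, so $\ik_j^{[p]}\subseteq S(\gk)$.
\end{itemize}

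Additivity of $\phi$ on the abelian ideal $S(\gk)$ then gives $S(\gk)^{[p]}\subseteq S(\gk)$.
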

\begin{proof}
 By the indecomposability theorem, $S(\gk)$ is a definable connected subgroup and there exist definable $\gk$-minimal abelian ideals $\ik_1,\dots \ik_n$ such that $S(\gk)=\ik_1+\dots +\ik_n$. We may take $n$ minimal. Now, $\ik_j\cap(\ik_1+\dots+\hat{\ik}_j+\dots +\ik_n)$ is an ideal contained in $\ik_j$; $\gk$-minimality and the choice of $n$ imply that it is a finite ideal. In particular, $[\ik_j,\ik_k]\subseteq\ik_i\cap \ik_j$, and therefore $[\ik_j,\ik_i]=0$. Consequently, $S(\gk)$ is an abelian ideal.
 \\\\
 Regarding stability under the $p$-map, it suffices to note that a definable $\gk$-minimal abelian ideal is a $p$-ideal.
\end{proof}
\begin{cor}\label{scission p-Frattini}
Let $\gk$ be a connected soluble $p$-algebra of finite Morley rank such that $p>MR(\gk)$. Suppose that $\Phi_p(\gk)^{\circ}=0$. Then $\gk=S(\gk)(+)\hk$, where $\hk$ is a definable connected $p$-subalgebra.
\end{cor}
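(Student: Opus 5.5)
The plan is to apply Lemma \ref{Frattini 2} directly with $\ik=S(\gk)$. By the preceding lemma, $S(\gk)$ is a definable connected abelian ideal of the form $\ik_1(+)\dots(+)\ik_n$ with each $\ik_k$ a definable connected $\gk$-minimal abelian ideal. It is also a $p$-algebra, so it is a definable connected abelian $p$-ideal. Next, $\Phi_p(\gk)^{\circ}=0$ gives $(S(\gk)\cap\Phi_p(\gk))^{\circ}\subseteq\Phi_p(\gk)^{\circ}=0$. The $p$-algebra version of Lemma \ref{Frattini 2} then yields a definable connected $p$-subalgebra $\hk$ with $\gk=\hk(+)S(\gk)$, which is the claim.

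Two points need checking. The first is that each $\gk$-minimal definable connected abelian ideal is a $p$-ideal; this is the remark used in the proof of the socle lemma. For a minimal $\ik$, the ideal $\ik^p$ is central by the Fact on abelian ideals. On an abelian ideal the $p$-map is additive, so $\ik^p$ is a definable connected subgroup. It is stable under $\mathbb{F}_p$-scalars, hence an ideal contained in $\ik_p$. It remains to see that $\ik_p$, which by indecomposability is generated by $\ik$ and its iterated $p$-powers, does not enlarge the socle beyond a sum of minimal abelian ideals. If this causes trouble, I would note that $\ik_p=\ik+\ik^p+\dots$ lies in $\ik+Z(\gk)$, and handle central pieces separately. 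In any case, this step is already asserted by the previous lemma, so I would simply cite it.

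The second point, which I expect to be the main obstacle, is whether the $p$-analogue of Lemma \ref{Frattini 2} really holds as stated. In its proof one chooses a minimal definable connected $p$-subalgebra $\ck$ with $\gk=\ik+\ck$ and shows $(\ik\cap\ck)^{\circ}\subseteq\Phi_p(\ck)$. This requires $(\ik\cap\ck)^{\circ}$ to be a $p$-ideal of $\ck$. Since $\ik$ and $\ck$ are $p$-subalgebras, $\ik\cap\ck$ is one, and its connected component is a $p$-ideal by Lemma \ref{p-ideal connexe}, as it is abelian. Being abelian, it is an ideal of $\gk=\ik+\ck$. Lemma \ref{Frattini 1} in its $p$-version then places it in $\Phi_p(\gk)$, and the hypothesis makes it trivial. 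With these verifications, the corollary follows. Solubility and $p>MR(\gk)$ enter only to guarantee that minimal ideals are abelian $p$-ideals, so that $S(\gk)$ is well behaved.
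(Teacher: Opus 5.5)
Your proposal is the paper's proof: apply the $p$-version of Lemma~\ref{Frattini 2} to $\ik=S(\gk)$, with the socle lemma supplying that $S(\gk)$ is a definable connected abelian $p$-ideal, and $(S(\gk)\cap\Phi_p(\gk))^{\circ}\subseteq\Phi_p(\gk)^{\circ}=0$. Your doubt in the first point is justified, since a single $\gk$-minimal abelian ideal need not be $p$-closed (take $\gk=\K e\oplus\K a\oplus\K c$ over an algebraically closed $\K$ with $[e,a]=a$, $c$ central, $e^{[p]}=e$, $a^{[p]}=c$, $c^{[p]}=0$), but only $S(\gk)^{[p]}\subseteq S(\gk)$ is needed, and this does hold when $\Phi_p(\gk)^{\circ}=0$: a minimal abelian $\ik$ is either central, and then $\ik^{[p]}$ is $0$ or again minimal central, or it is contained in $\gk'$, and then $\ik^{[p]}$ lies in every maximal definable connected $p$-subalgebra $\mk$ (otherwise the central $p$-ideal $(\ik^{[p]})_p$ gives $\gk=\mk+(\ik^{[p]})_p$, whence $\ik\subseteq\gk'\subseteq\mk$), hence in $\Phi_p(\gk)^{\circ}=0$.
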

\begin{proof}
It suffices to apply Lemma \ref{Frattini 2} for $\ik=S(\gk)$.
\end{proof}
\begin{theo}\label{socle et frattini}
Let $\gk$ be a connected soluble Lie ring of finite Morley rank such that $\mathrm{char}(\gk)>MR(\gk)$. Then $\Phi(\gk)^{\circ}=0$ if and only if $\gk=S(\gk)(+)\hk$, for a definable connected subring $\hk$.
\end{theo}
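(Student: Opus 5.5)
We prove the two directions separately. The forward direction is the Lie ring analogue of Corollary \ref{scission p-Frattini}. Assume $\Phi(\gk)^{\circ}=0$. By the lemma on the socle, $S(\gk)$ is a definable connected abelian ideal. Hence $(S(\gk)\cap\Phi(\gk))^{\circ}\subseteq\Phi(\gk)^{\circ}=0$, and Lemma \ref{Frattini 2}, in its Lie ring version, gives a definable connected subring $\hk$ with $\gk=S(\gk)(+)\hk$. Nothing more is needed in this direction.

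For the converse, assume $\gk=S(\gk)(+)\hk$ and write $S(\gk)=\ik_1(+)\dots(+)\ik_n$ with each $\ik_k$ a definable connected $\gk$-minimal abelian ideal. We argue by contradiction and suppose $\Phi(\gk)^{\circ}\neq 0$. By \cite[Proposition 4.5]{TZ}, under $\mathrm{char}(\gk)>MR(\gk)$ the connected component $\Phi(\gk)^{\circ}$ is a nilpotent definable connected ideal; if one only has the weaker statement that $\Phi(\gk)$ is an ideal, replace $\Phi(\gk)^{\circ}$ by its last nonzero derived term. In either case we obtain a nonzero definable connected abelian ideal inside $\Phi(\gk)^{\circ}$. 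Choose a $\gk$-minimal definable connected ideal $\ik$ inside it; minimality together with solubility makes $\ik$ abelian. Then $\ik\subseteq S(\gk)$, so the goal reduces to showing $(S(\gk)\cap\Phi(\gk))^{\circ}=0$.

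To get this, for each $j$ set
\[\mk_j=\Big(\textstyle\sum_{k\neq j}\ik_k\Big)+\hk .\]
This is a subgroup because $S(\gk)$ is abelian and normalized by $\hk$. It is a subring, and it is definable and connected by indecomposability. We claim that $\mk_j$ is a maximal definable connected subring. Let $\mk$ be a definable connected subring containing $\mk_j$. Then $\mk=\mk_j+(\mk\cap\ik_j)^{\circ}$ up to finite index, by a rank count. Now $(\mk\cap\ik_j)^{\circ}$ is normalized by $\hk$, by the other $\ik_k$ (since $S(\gk)$ is abelian) and by $\ik_j$ itself, so it is an ideal of $\gk$. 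By $\gk$-minimality it is $0$ or $\ik_j$, and connectedness gives $\mk=\mk_j$ or $\mk=\gk$. Since $\ik_j\not\subseteq\mk_j$ up to a finite set, $\mk_j$ is proper. Consequently $\Phi(\gk)\cap S(\gk)\subseteq\bigcap_j\mk_j\cap S(\gk)$. Using the direct sum decompositions, $\mk_j\cap S(\gk)$ is, up to finite index, $\sum_{k\neq j}\ik_k$, and the intersection over $j$ is finite. Hence $(\Phi(\gk)\cap S(\gk))^{\circ}=0$, contradicting $0\neq\ik\subseteq S(\gk)\cap\Phi(\gk)^{\circ}$.

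The main obstacle lies in this second direction. The symbol $(+)$ means the sums are only direct up to finite intersections, so every modular-law computation must be carried out up to finite index with connected components taken at each step. One also has to check that the ideal found inside $\Phi(\gk)^{\circ}$ really is abelian and lies in the socle; this is where the hypothesis $\mathrm{char}(\gk)>MR(\gk)$ and the result of \cite{TZ} are used.
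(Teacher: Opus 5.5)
Your proposal is correct and follows essentially the same route as the paper. The splitting direction is Lemma \ref{Frattini 2} applied to $S(\gk)$, and the other direction shows that each $\hk+\sum_{k\neq j}\ik_k$ is a maximal definable connected subring, so that $\Phi(\gk)$ meets $S(\gk)$ in a finite set (the paper phrases this as $\Phi(\gk)^{\circ}\subseteq\hk$), contradicting the existence of a minimal abelian ideal inside $\Phi(\gk)^{\circ}$. Your version is slightly more careful than the paper's: you justify maximality via the ideal $(\mk\cap\ik_j)^{\circ}$, and you make explicit, via \cite{TZ}, that $\Phi(\gk)^{\circ}$ is an ideal.
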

\begin{proof}
Suppose that $\gk=S(\gk)(+)\hk$ for $\hk$ a definable connected subring. Suppose for contradiction that $\Phi(\gk)^{\circ}\neq 0$ and let $\ik$ be a definable connected $\gk$-minimal ideal contained in $\Phi(\gk)^{\circ}$. Now, $S(\gk)=\ik_1(+)\dots (+)\ik_r$. But \[\bk_i=\hk+(\ik_1)(+)\dots (+)\hat{\ik}_i(+)\dots (+)\ik_r\] is a maximal definable connected subring (otherwise, there would exist a maximal definable connected subring $\mk$ containing $\bk_i$ and $\gk=\mk(+)\ik_i=\bk_i+\ik_i$; thus, $MR(\mk)+MR(\ik_i)=MR(\bk_i)+MR(\ik_i)$ and therefore $MR(\mk)=MR(\bk_i)$, a contradiction). Therefore \[\Phi(\gk)\subseteq\bigcap_i \bk_i\subseteq \hk+(\ik_1\cap \dots \cap \ik_n)\] and in particular, $\ik\subseteq\Phi(\gk)^{\circ}\subseteq \hk$, a contradiction since $(\hk\cap S(\gk))^{\circ}=0$.
\\
Conversely, the result follows from Lemmas \ref{Frattini 1} and \ref{Frattini 2}.
\end{proof}

\begin{theo}
Let $\gk$ be a connected soluble $p$-algebra of finite Morley rank such that $p>MR(\gk)$. Then $\Phi(\gk)^{\circ}\subseteq \Phi_p(\gk)^{\circ}$.
\end{theo}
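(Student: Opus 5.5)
We argue by induction on $MR(\gk)$, reducing to the case $\Phi_p(\gk)^{\circ}=0$ and then using the splitting results of the preceding subsection.

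\textbf{Reduction.} By the previous corollary, $\ik:=\Phi_p(\gk)^{\circ}$ is a definable connected nilpotent $p$-ideal. Suppose $\ik\neq 0$ and set $\overline{\gk}=\gk/\ik$.

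\begin{itemize}
\item The maximal definable connected $p$-subalgebras of $\gk$ all contain $\ik$. They therefore correspond to those of $\overline{\gk}$, so $\Phi_p(\overline{\gk})^{\circ}=(\Phi_p(\gk)/\ik)^{\circ}$, which is finite.
\item By induction, $\Phi(\overline{\gk})^{\circ}\subseteq\Phi_p(\overline{\gk})^{\circ}=0$.
\item Let $\pi:\gk\to\overline{\gk}$ be the projection. Each maximal definable connected subring $\overline{\mk}$ of $\overline{\gk}$ has preimage $\pi^{-1}(\overline{\mk})$, a maximal definable connected subring of $\gk$. Hence $\Phi(\gk)\subseteq\pi^{-1}(\Phi(\overline{\gk}))$.
\item Consequently $\Phi(\gk)^{\circ}\subseteq \pi^{-1}(\Phi(\overline{\gk}))^{\circ}$, and this is contained in $\ik$ up to finite index. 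Taking connected components gives $\Phi(\gk)^{\circ}\subseteq\ik=\Phi_p(\gk)^{\circ}$.
\end{itemize}

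\textbf{Case $\Phi_p(\gk)^{\circ}=0$.} It remains to show $\Phi(\gk)^{\circ}=0$.

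\begin{itemize}
\item Corollary \ref{scission p-Frattini} gives $\gk=S(\gk)(+)\hk$ with $\hk$ a definable connected $p$-subalgebra.
\item Here $S(\gk)$ is the socle computed in the Lie ring sense: it is generated by definable connected $\gk$-minimal abelian ideals. These are automatically $p$-ideals, as noted in the socle lemma, so the socle is the same whether computed for Lie rings or for $p$-algebras.
\item $\hk$ is in particular a definable connected subring. Theorem \ref{socle et frattini} then applies to $\gk$ viewed as a Lie ring of characteristic $p>MR(\gk)$ and yields $\Phi(\gk)^{\circ}=0$.
\end{itemize}

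\textbf{Main obstacle.} The delicate point is the reduction step: the correspondence of maximal subalgebras with the quotient, together with the finiteness bookkeeping.

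\begin{itemize}
\item The hypothesis $p>MR(\overline{\gk})$ is inherited by the quotient.
\item The quotient $\overline{\gk}$ is a connected soluble $p$-algebra, since $\ik$ is a $p$-ideal.
\item Maximal definable connected subrings of $\overline{\gk}$ lift to maximal ones in $\gk$ containing $\ik$. This needs care with connected components of preimages.
\item If needed, we would instead quotient by a $\gk$-minimal definable connected $p$-ideal inside $\ik$. This uses Lemma \ref{Frattini 1}-style arguments to control how $\Phi$ passes to quotients.
\end{itemize}
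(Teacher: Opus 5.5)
Your proposal is correct and follows the paper's proof. Like the paper, you pass to $\gk/\Phi_p(\gk)^{\circ}$, whose $p$-Frattini subalgebra has trivial connected component, apply Corollary \ref{scission p-Frattini} and Theorem \ref{socle et frattini} there, and pull back along the correspondence of maximal definable connected subrings, a step you spell out more carefully than the paper's final ``therefore''. The induction on $MR(\gk)$ is superfluous, though, since your ``Case $\Phi_p(\gk)^{\circ}=0$'' argument applies directly to the quotient.
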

\begin{proof}
  First, $\Phi_p^{\circ}(\gk/\Phi^{\circ}_p(\gk))=0$; consequently, $\gk/\Phi^{\circ}_p(\gk)$ is split over $S(\gk/\Phi^{\circ}_p(\gk))$ (Corollary \ref{scission p-Frattini}). By Theorem \ref{socle et frattini}, $\Phi^{\circ}(\gk/\Phi^{\circ}_p(\gk))=0$ and therefore $\Phi(\gk)^{\circ}\subseteq\Phi_p^{\circ}(\gk)$.
\end{proof}

\section{Minimal simple $p$-algebras}
We indicate how the preceding results could contribute to a classification project for simple $p$-algebras of finite Morley rank that are ``minimal'' in a sense we will now make precise.
\\\\
First, we recall the construction of the Witt algebra $W(1,1)$. Consider the algebra of truncated polynomials $A=\K[X]/(X^p)=\K[x]$, for $\K$ a field of characteristic $p>0$; we define the Witt algebra as the algebra of derivations of $A$: this is a $p$-Lie algebra with the usual bracket for endomorphisms, where the $p$-map simply corresponds to the $p$-th power map. Now, a derivation $D$ is entirely determined by the value $D(x)$; denoting $\delta=\frac{d}{dx}$, we therefore have a natural basis given by $e_i=x^{i+1}\delta$ for $i\in \{-1,\dots, p-2\}$. In this case, the only semi-simple element of this basis is $e_0$, which therefore generates a $p$-good torus of dimension 1; moreover, $e_i^p=0$ for $1\leq i$. Note also that $e_{-1}, e_0, e_1$ generate a copy of $\mathfrak{sl}(2,K)$. The classification of subalgebras of $W(1,1)$ given in \cite{St} easily implies the following property: let $\bk$ be a non-trivial soluble subalgebra; then $N_W(\bk)$ is soluble.
\\\\
We are led to the following definition.
\begin{definition}
A $p$-algebra of finite Morley rank is said to be minimal if it is simple, if $MR(\gk)\leq p$, and if for any non-trivial definable connected soluble $p$-subalgebra $\bk$, $N^{\circ}_{\gk}(\bk)$ is soluble.
\end{definition}
From the point of view of the classification of simple Lie algebras of finite linear dimension over an algebraically closed field of positive characteristic, the Witt algebra and $\mathfrak{sl}_2$ are the only ``minimal'' $p$-algebras.
\\
Note that our minimality hypothesis allows us to use all the results of the preceding sections concerning definable soluble $p$-algebras.
\begin{conjec}
 Let $\gk$ be a minimal $p$-algebra of finite Morley rank. Then $\gk\simeq \mathfrak{sl}_2(\K)$ or $\gk\simeq W(1,1)$, for a definable field $\K$ of characteristic $p>0$.
\end{conjec}

\bibliography{p-algebre}
\bibliographystyle{plain}

\end{document}